\newtheoremstyle{break}
 {} 
 {} 
 {\itshape} 
 {} 
 {\bfseries} 
 {} 
 {\newline} 
 {\thmname{#1}\thmnumber{ #2}\thmnote{ (#3)}} 
\newtheoremstyle{breakdef}
 {} 
 {} 
 {} 
 {} 
 {\bfseries} 
 {} 
 {\newline} 
 {\thmname{#1}\thmnumber{ #2}\thmnote{ (#3)}} 
\newtheoremstyle{remark}
 {} 
 {} 
 {} 
 {} 
 {\itshape} 
 {.} 
 {0.5em} 
 {\thmname{#1}\thmnumber{ #2}\thmnote{ {\normalfont (}#3{\normalfont )}}} 
\theoremstyle{breakdef}
\newtheorem{definition}{Definition}[section]
\theoremstyle{remark}
\newaliascnt{rem}{definition}
\newtheorem{rem}[rem]{Remark}
\newaliascnt{exa}{definition}
\newtheorem{exa}[exa]{Example}
\newaliascnt{lem}{definition}
\newtheorem{lem}[lem]{Lemma}
\theoremstyle{break}
\newtheorem{thm}{Theorem}
\newaliascnt{prop}{definition}
\newtheorem{prop}[prop]{Proposition}
\newaliascnt{cor}{definition}
\newtheorem{cor}[cor]{Corollary}
\newtheorem*{corintro}{Corollary to Theorem 3}
\crefname{rem}{Remark}{Remarks}
\crefname{definition}{Definition}{Definitions}
\crefname{exa}{Example}{Examples}
\crefname{lem}{Lemma}{Lemmas}
\crefname{thm}{Theorem}{Theorems}
\crefname{prop}{Proposition}{Propositions}
\crefname{cor}{Corollary}{Corollaries}
\crefname{corintro}{Corollary to Theorem 3}{Corollaries}
\newsavebox{\@brx}
\newcommand{\llangle}[1][]{\savebox{\@brx}{\(\m@th{#1\langle}\)}%
	\mathopen{\copy\@brx\kern-0.5\wd\@brx\usebox{\@brx}}}
\newcommand{\rrangle}[1][]{\savebox{\@brx}{\(\m@th{#1\rangle}\)}%
	\mathclose{\copy\@brx\kern-0.5\wd\@brx\usebox{\@brx}}}
\newcommand{\R}{\mathbb{R}}
\newcommand{\TheSequences}{\mathcal{B}}
\newcommand{\TheSequencesQuot}{\overline{\mathcal{B}}}
\newcommand{\into}{\hookrightarrow}
\newcommand{\intersection}{i}
\DeclareMathOperator{\Id}{Id}
\DeclareMathOperator{\SL}{SL}
\DeclareMathOperator{\PSL}{PSL}
\DeclareMathOperator{\GL}{GL}
\DeclareMathOperator{\Aut}{Aut}
\DeclareMathOperator{\Trans}{Trans}
\DeclareMathOperator{\Deck}{Deck}
\DeclareMathOperator{\Sym}{Sym}
\DeclareMathOperator{\Aff}{Aff}
\DeclareMathOperator{\Hom}{Hom}
\newcommand{\NN}{\mathbb{N}}
\newcommand{\ZZ}{\mathbb{Z}}
\newcommand{\RR}{\mathbb{R}}
\DeclareMathOperator{\NNN}{\mathcal{N}}
\DeclareMathOperator{\HHH}{\mathcal{H}}
\DeclareMathOperator{\Cov}{Cov}
\DeclareMathOperator{\TransCov}{TransCov}
\newcommand{\p}{p}
\title{Veech groups of covers \\ of the Chamanara surface}
\author{Mauro Artigiani\thanks{Universidad Nacional de Colombia - Bogot\'a} \and
		Anja Randecker\thanks{Heidelberg University, Germany} \thanks{Universit\"{a}t des Saarlandes, Germany} \and
		Chandrika Sadanand\thanks{Bowdoin College, ME, USA} \and
		Ferrán Valdez\thanks{Centro de Ciencias Matem\'aticas, UNAM Campus Morelia, M\'exico} \and
		Gabriela Weitze-Schmith\"{u}sen\footnotemark[3]}
\date{October 21, 2025}
\begin{document}

\maketitle

\begin{abstract}
	We study finite abelian covers of the Chamanara surface, an example
	of a finite-area infinite translation surface with interesting dynamics and a large Veech group.
	Specifically, the Veech group of the Chamanara surface is a virtually free group on two generators.
	We characterize when finite abelian covers have large Veech groups themselves, namely
	when their Veech group has finite index in that of the Chamanara surface.
	For degree-$2$ covers, we provide a detailed analysis of these finite-index Veech
	groups. As an application, we prove that every free group
	arises as the projective Veech group of a finite-area infinite translation surface.
\end{abstract}

A \emph{translation surface} is a Riemann surface $X$ with an atlas of charts
$\mathcal{U}$ for which all transition functions are locally translations. Translation surfaces occur naturally, via a procedure known as \emph{unfolding}, when studying the
motion of an ideal billiard ball inside a polygonal billiard table,
see~\cite{FoxKershner, KatokZemlyakov, HubertSchmidt}.

A translation surface is called \emph{finite} if its metric completion
$\overline{X}$ is a compact surface without boundary. In this case,
the \emph{set of singularities}, i.e., the set $\overline{X}\setminus X$,
consists of finitely many points which are conical singularities of cone angle
$2(k+1)\pi$, for an integer $k\geq 0$. The theory of finite translation
surfaces has been developed intensively since the pioneering work of Masur and
Veech in the~1980s. A crucial tool in the area is the moduli space of
translation surfaces of genus $g$, denoted $\mathcal{H}_g$, and the action on
this space by the group $\SL(2,\RR)$, see \cref{sec:chamanara}.

The dynamics of the geodesic flow on a given finite
translation surface is intimately connected to the dynamics
of the translation surface inside the moduli space $\mathcal{H}_g$ under $\SL(2,\RR)$. Given a
translation surface $(X, \mathcal{U})$, the stabilizer for the $\SL(2,\RR)$--action is called
the \emph{Veech group}, which we denote by $\Gamma(X)$, omitting the reference to
the translation atlas. This is a discrete, not co-compact subgroup of
$\SL(2,\RR)$, see \cite{Veech, Vorobets96}, which is trivial for generic
surfaces~\cite{Moller:affine}. The celebrated \emph{Veech dichotomy} states that
the geodesic flow on a translation surface has optimal dynamical behaviour if
$\Gamma(X)$ is a \emph{lattice}~\cite{Veech}. By a theorem of
Smillie~\cite{GekhtmanWright}, this happens if and only if the
$\SL(2,\RR)$--orbit is \emph{closed} inside $\mathcal{H}_g$. In this case, the
projection of the orbit on the moduli space of algebraic curves of genus $g$,
$\mathcal{M}_g$, under the forgetful map, yields a complex geodesic for the
Teichmüller metric, called a \emph{Teichmüller curve}. The study of
$\SL(2,\RR)$--orbit closures is a central topic in the field, and has seen
spectacular progress in the~2010s with the results of Eskin, Mirzakhani, and
Mohammadi~\cite{EM, EMM}.

Since the early 2000s, in parallel to the above theory, the study of
infinite translation surfaces, i.e., surfaces which are not finite, also began to emerge.
For instance, the surface obtained
by unfolding a polygonal billiard with at least one angle which is not a
rational multiple of $\pi$ always has infinite topological type, i.e., its fundamental group is
not finitely generated, and so it is not a finite translation surface. More
precisely, topologically we always obtain a so-called \emph{Loch Ness
monster}, which is, up to homeomorphism, the only surface with infinite genus
and only one end~\cite{Valdez}. Leaving the finite translation surface world
creates many interesting new phenomena. Infinite cone angles can appear in the
metric completion~\cite{HubertSchmithusen}, and there are also singularities which
are not of conical type, called \emph{wild singularities},
see~\cite{bowman_valdez_13,randecker_16}. Moreover, there are examples of
infinite translation surfaces, such as the \emph{icicled
surface}~\cite{randecker_16}, in which the geodesic flow is not
defined for all time for (Lebesgue) almost every point and direction. An
introduction to infinite translation surfaces can be found
in~\cite{DHV:infinite}. 

Infinite translation surfaces are in many ways harder to study than their finite
counterparts. A major challenge is the lack of a good moduli space, analogous to
$\mathcal{H}_g$; see~\cite{Hooper:moduli, Trevino} for some progress in this
direction. Perhaps surprisingly, however, Veech groups are better understood in
the \emph{infinite} case than in the finite one. In fact, only some classes of
Veech groups for finite translation surfaces are well understood,
see~\cite{schmithuesen_2005, ellenberg_mcreynolds_12, hadari_10,
thevis_22}, and for few types of groups, it is known that they cannot occur,
see \cite{KenyonSmillie00,HubertSchmidt01}. In general, it is a wide open
problem which groups can be realized as Veech groups of finite translation
surfaces. On the contrary, the groups that can appear as Veech groups of
infinite translation surfaces are completely classified
in~\cite{przytycki_schmithuesen_valdez_11}. Moreover, the realization problem
was also studied
in~\cite{ramirez_valdez_17,artigiani_randecker_sadanand_valdez_weitze-schmithuesen_23}.
In particular, using techniques of Thurston, Veech, and Hooper, one can realize
the free group on two generators in a large class of
surfaces~\cite{morales_valdez_22}. The relationship between the Veech group and the
dynamics of the geodesic flow on infinite translation surfaces, however, is less
clear. There are known examples for lattice surfaces, i.e., surfaces whose
Veech group is a lattice~\cite{Hooper:Veech1, Hooper:Veech2}, but it is not
known whether a version of the Veech dichotomy holds for infinite translation
surfaces.

\begin{figure}[t]
	\centering
	\begin{tikzpicture}[x=1cm,y=1cm, scale=0.5]
		\draw (0,0) -- node[below] {$b_1$}
		(4,0) -- node[below] {$b_2$}
		(6,0) -- node[below] {$b_3$}
		(7,0) -- node[below] {$b_4$}
		(7.5,0) -- (7.75,0);
		\draw[densely dotted] (7.75,0) -- (8,0);

		\fill (0,0) circle (2pt);
		\fill (8,8) circle (2pt);

		\fill (4,0) circle (2pt);
		\fill (6,0) circle (2pt);
		\fill (7,0) circle (2pt);
		\fill (7.5,0) circle (2pt);
		\fill (7.75,0) circle (2pt);

		\draw (8,8) -- node[above] {$b_1$}
		(4,8) -- node[above] {$b_2$}
		(2,8) -- node[above] {$b_3$}
		(1,8) -- node[above] {$b_4$}
		(0.5,8) -- (0.25,8);
		\draw[densely dotted] (0.25,8) -- (0,8);

		\fill (4,8) circle (2pt);
		\fill (2,8) circle (2pt);
		\fill (1,8) circle (2pt);
		\fill (0.5,8) circle (2pt);
		\fill (0.25,8) circle (2pt);

		\draw (0,0) -- node[left] {$a_1$}
		(0,4) -- node[left] {$a_2$}
		(0,6) -- node[left] {$a_3$}
		(0,7) -- node[left] {$a_4$}
		(0,7.5) -- (0,7.75);
		\draw[densely dotted] (0,7.75) -- (0,8);

		\fill (0,4) circle (2pt);
		\fill (0,6) circle (2pt);
		\fill (0,7) circle (2pt);
		\fill (0,7.5) circle (2pt);
		\fill (0,7.75) circle (2pt);

		\draw (8,8) -- node[right] {$a_1$}
		(8,4) -- node[right] {$a_2$}
		(8,2) -- node[right] {$a_3$}
		(8,1) -- node[right] {$a_4$}
		(8,0.5) -- (8,0.25);
		\draw[densely dotted] (8,0.25) -- (8,0);

		\fill (8,4) circle (2pt);
		\fill (8,2) circle (2pt);
		\fill (8,1) circle (2pt);
		\fill (8,0.5) circle (2pt);
		\fill (8,0.25) circle (2pt);
	\end{tikzpicture}
	\caption{The Chamanara surface, also known as the baker's map surface. We identify, using translations, segments that are parallel and have the same length.}
	\label{fig:chamanara}
\end{figure}

This article focuses on the study of Veech groups of infinite translation
surfaces of \emph{finite area}, which are much less understood, since all the
results listed above are obtained by constructions which inherently produce translation surfaces of infinite area.
Finite-area infinite translation surfaces have rich dynamics, see~\cite{LindseyTrevino, Gordillo} for some results in this direction,
and in the case that the Veech group is a lattice, a version of the Veech dichotomy is conjectured to hold.
However, it is not yet known whether there exist  infinite lattice surfaces of finite
area at all.
Finite-area infinite translation surfaces with
``large'' Veech groups, in the sense that they contain free groups on two or
more generators, are the closest to lattice surfaces as we can get right now, and hence are particularly interesting.

Historically, one of the first
studied examples of finite-area infinite translation surfaces was the
\emph{Chamanara surface}, see \cite{chamanara_04}. 
It is represented in \cref{fig:chamanara}. The
projective Veech group of the Chamanara surface, i.e., the image of its Veech group in $\PSL(2,\RR)$, is a free group on two
generators~\cite{chamanara_04,herrlich_randecker_16}. A natural question is thus
whether free groups on more than two generators can also appear as Veech groups
for finite-area infinite translation surfaces. Our first main result
gives an affirmative answer to this question for the projective Veech group.

\begin{thm}[Realizing $F_n$ as Veech group] \label{thm:realization_F_n}
	Let $n\in \mathbb{N}$. Then the free group on $n$ generators can be
	realized as the projective Veech group of an infinite translation surface of
	finite area.
\end{thm}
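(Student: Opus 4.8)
The plan is to realize $F_n$ as a projective Veech group by constructing suitable finite abelian covers of the Chamanara surface and exploiting the fact that its own projective Veech group is free on two generators. Since the Chamanara surface $C$ already realizes $F_2$, the natural strategy is to pass to covers on which the Veech group acts, cutting down the fundamental domain in $\PSL(2,\RR)$ in a controlled way. Concretely, I would first recall that for a characteristic (e.g.\ abelian) cover $\pi\colon \tilde{X}\to C$, every affine automorphism of $C$ whose action on homology preserves the relevant characteristic subgroup lifts to an affine automorphism of $\tilde{X}$; this gives a homomorphism from a finite-index subgroup $\Gamma_0\le\Gamma(C)$ into $\Gamma(\tilde{X})$. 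The image is again a subgroup of $\PSL(2,\RR)$, and since a finite-index subgroup of a virtually free group is itself virtually free (indeed a finite-index subgroup of $F_2$ is free of some finite rank by the Nielsen--Schreier formula), the projective Veech groups arising this way are free groups.

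The key computation is to control the \emph{rank} of the resulting free group. The projective Veech group $\overline{\Gamma}(C)\cong F_2$ acts on its Bass--Serre tree / Cayley graph, and a finite-index subgroup of index $m$ in $F_2$ is free of rank $m+1$ by Nielsen--Schreier. Therefore I would aim to produce, for each target rank $n$, a finite abelian cover whose projective Veech group is exactly such a finite-index subgroup of $\overline{\Gamma}(C)$ of the appropriate index $m=n-1$. The essential step is then an analysis of which elements of $\overline\Gamma(C)$ actually lift to affine automorphisms of a given cover, i.e.\ computing the stabilizer of the cover inside the action of $\overline\Gamma(C)$ on the set of abelian covers (equivalently, on the homology of $C$ with appropriate coefficients). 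This is precisely the characterization promised in the abstract of when a finite abelian cover has a finite-index Veech group, which I would invoke or establish: the projective Veech group of the cover is the kernel of the action of $\overline\Gamma(C)$ on the relevant finite quotient of $H_1$, and by choosing the covering data we can prescribe this kernel to have any desired finite index.

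I would carry out the steps in the following order. First, set up the correspondence between finite abelian covers of $C$ and subgroups of $H_1(C;\ZZ)$ of finite index (or finite quotients thereof), and describe the induced action of $\Aff(C)$, hence of $\Gamma(C)$, on these covering data. Second, identify $\overline\Gamma(C)\cong F_2$ with an explicit free group and determine its action on the relevant homological quotient, so that the Veech group of a cover corresponds to a stabilizer subgroup. Third, for a given $n$, exhibit a specific cover—using the freedom in choosing the finite abelian group and the homomorphism $H_1(C;\ZZ)\to A$—whose stabilizer in $\overline\Gamma(C)$ has index $n-1$; by Nielsen--Schreier this stabilizer is free of rank exactly $n$, and one checks it is the full projective Veech group of the cover (lifts exist, and no extra symmetries appear). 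Finally, verify that the cover is still a finite-area infinite translation surface, which is automatic since finite covers preserve finite area and a nontrivial cover of an infinite-type surface remains of infinite type.

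The hard part will be the third step: realizing a prescribed finite index, and hence a prescribed rank, rather than merely showing that \emph{some} free group of rank $>2$ appears. This requires understanding the action of $\overline\Gamma(C)\cong F_2$ on $H_1(C;\ZZ)$ (or its finite quotients) precisely enough to produce stabilizers of every index $m=n-1$, and, symmetrically, to rule out that the lifted symmetries or hyperelliptic-type involutions enlarge the projective Veech group beyond the expected free subgroup. Establishing that the computed subgroup is exactly the projective Veech group—neither smaller (all claimed elements genuinely lift) nor larger (no unexpected affine automorphisms arise on the cover)—is where the main technical effort lies, and it is essentially the content of the abstract's characterization of when covers have finite-index Veech groups.
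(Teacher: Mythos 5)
Your overall strategy---pass to finite abelian covers of the Chamanara surface, identify the projective Veech group of a cover with the stabilizer of its covering data under the action of $P\Gamma(X)\cong F_2$, and convert index into rank via Nielsen--Schreier---is exactly the route the paper takes for $n\geq 2$. However, your opening ``recalled fact'' is false in this setting and hides the real content of the theorem: you assert that the affine maps preserving the covering subgroup automatically form a \emph{finite-index} subgroup $\Gamma_0\leq\Gamma(X)$, and later that the Veech group of the cover is ``the kernel of the action of $\overline{\Gamma}(C)$ on the relevant finite quotient of $H_1$.'' That intuition is transplanted from finite translation surfaces, where $H_1$ is finitely generated and such actions factor through finite groups. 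Here $H_1(X;\ZZ)$ is infinitely generated (free on the curves $\alpha_i,\beta_i$), the hyperbolic element $H=\left(\begin{smallmatrix}2&0\\0&1/2\end{smallmatrix}\right)\in\Gamma(X)$ acts essentially as a shift on the covering data, and a generic defining vector is fixed by no power of $H$; consequently, for \emph{most} finite abelian covers the Veech group has infinite index in $\Gamma(X)$ (this is \cref{thm:criterion_finite_index_p=2,thm:characterization_finite_index}: finite index holds if and only if the defining vector is periodic, equivalently fixed by some $H^n$). So your ``third step'' is not merely the hard part, it is the entire theorem: one must exhibit, for every prescribed index $m$, a specific periodic covering datum whose stabilizer (not kernel) has index exactly $m$. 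The paper does this by restricting to degree~$2$, computing the action of $P_1$, $P_2$, $H$ on $(\ZZ/2\ZZ)$-sequences explicitly (\cref{ComputeTheGenerators}), and showing that Schreier coset graphs of Striezel type with $m$ vertices occur for every $m$ (\cref{prop:possible_Schreier_coset_graphs,prop:all_Schreier_coset_graphs,cor:every_free_group_is_projective_Veech_group}); your check that ``no extra symmetries appear'' is the everything-descends property, \cref{prop:everything_descends}, proved via the unique horizontal saddle connection of length $\sfrac{1}{2}$.

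There is a second, unfixable-within-your-framework gap: the case $n=1$. The free group on one generator is $\ZZ$, and no finite-index subgroup of $F_2$ is infinite cyclic (index $m$ forces rank $m+1\geq 2$ by the very Nielsen--Schreier formula you invoke), so no cover construction of this kind can realize it. The paper covers $n=1$ by a separate example, citing the exponential surface of \cite[Proposition 2.4]{randecker_16}, an infinite-type finite-area translation surface whose Veech group is isomorphic to $\ZZ$. Without some analogous additional construction, your argument proves the statement only for $n\geq 2$.
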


Moreover, we show that all the surfaces constructed
above are topologically Loch Ness monsters, see \cref{thm:topology_d=2}. We
remark that the analogue of \cref{thm:realization_F_n} for finite translation
surfaces is also true. In fact, Ellenberg and McReynolds showed in
\cite{ellenberg_mcreynolds_12} that, when P\(\Gamma(2)\) denotes the image of the
principal congruence group \(\Gamma(2)\) of level $2$ in \(\PSL(2,\ZZ)\), then
any subgroup of finite index of P$\Gamma(2)$, occurs as projective Veech group
of a finite square-tiled surface. The result then follows from the
observation that P$\Gamma(2)$ is a free group on two generators and that, by
Basse--Serre theory, any finitely generated free group occurs as finite-index
subgroup of the free group on two generators.

We prove \cref{thm:realization_F_n} by studying finite abelian covers of the
Chamanara surface. Let us stress that, while both finite- and infinite-degree
covers of finite translation surfaces have been much studied, not much is known
about covers of infinite translation surfaces. An exception is the work of
Hooper and Treviño~\cite{hooper_trevino_19}. Following their approach, in
\cref{sec:monodromy}, we explain how to identify  normal covers of the Chamanara
surface~$X$ with finite deck group~$G$ with a monodromy vector $h\in
G^\mathbb{Z}$, which we call a \emph{defining vector} of the cover. Thus, we
identify the space of normal covers with deck group $G$ with a subquotient of the
space $G^\mathbb{Z}$, which we denote~$\TheSequences_G$. Given a defining vector
$h$, we denote by $Y_h$ the corresponding cover.

A crucial ingredient is that the Chamanara surface has the
\emph{everything descends property}, i.e., for any covering $Y \to X$, every
affine homeomorphism  of $Y$ descends to $X$, see
\cref{prop:everything_descends}. In particular, for any defining vector $h$, we
have that the Veech group $\Gamma(Y_h)$ is always contained in the Veech group
of the  Chamanara surface $\Gamma(X)$.
Then, we obtain \cref{thm:realization_F_n} by studying covers of
degree $2$ of the Chamanara surface, and using the following characterization of
being a finite-index subgroup of the virtually free group $\Gamma(X)$.

\begin{thm}[Characterization of finite index ($d=2$)]\label{thm:criterion_finite_index_p=2}
	Let $h$ be a defining vector of a cover $Y_h$ of the Chamanara surface $X$
	of degree $2$. Then, the index of~$\Gamma(Y_h)$ in $\Gamma(X)$ is finite if
	and only if $h$ is periodic.
\end{thm}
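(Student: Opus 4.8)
The plan is to reduce the statement to a purely dynamical question about the action of $\Gamma(X)$ on defining vectors, and then to analyze that action through the generators of the projectively free Veech group. First I would make precise the dictionary between subgroups of $\Gamma(X)$ and orbits on $\TheSequences_G$ with $G=\ZZ/2\ZZ$. By the everything descends property (\cref{prop:everything_descends}), every affine homeomorphism of $Y_h$ descends to one of $X$, so taking derivatives yields an inclusion $\Gamma(Y_h)\hookrightarrow\Gamma(X)$. An element $A\in\Gamma(X)$, realized by an affine homeomorphism $\phi_A$ of $X$, lifts to $Y_h$ precisely when $\phi_A$ sends the cover $Y_h$ to an isomorphic cover, that is, when $A$ fixes the class $[h]\in\TheSequences_G$ under the induced $\Gamma(X)$--action. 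Hence $\Gamma(Y_h)=\mathrm{Stab}_{\Gamma(X)}([h])$, and the orbit--stabilizer theorem gives
$$[\Gamma(X):\Gamma(Y_h)]=\bigl|\Gamma(X)\cdot[h]\bigr|.$$
So it suffices to show that this orbit is finite if and only if $h$ is periodic.

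Second, I would compute the action of a generating set of $\Gamma(X)$ on $h\in(\ZZ/2\ZZ)^{\ZZ}$. The key point is that the hyperbolic (baker's-map) generator $\rho$ renormalizes the geometric subdivision of the Chamanara surface and therefore acts on the defining vector as the shift $\sigma$, given by $(\sigma h)_i=h_{i+1}$, up to the identifications built into the quotient $\TheSequences_G$. The remaining parabolic generator acts on $h$ by a fixed, shift-equivariant $\ZZ/2\ZZ$--linear recombination of finitely many neighboring entries; to pin down this formula explicitly I would track how the associated affine map permutes the edges carrying the monodromy. Establishing these two formulas, and checking that the ``sub'' and ``quotient'' conditions cutting out $\TheSequences_G$ are compatible with them, is the step I expect to be the \emph{main obstacle}, since it requires careful bookkeeping of the monodromy under each generator. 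The structural feature I need afterward is that both induced maps commute with $\sigma$, and hence with $\sigma^N$ for every $N$.

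Third, granting the explicit action, both directions follow from elementary symbolic dynamics. If $h$ has period $N$, then the set of all period-$N$ vectors in $(\ZZ/2\ZZ)^{\ZZ}$ is finite, of size at most $2^N$, and is exactly the fixed set of $\sigma^N$; since both generators commute with $\sigma^N$, each preserves this finite set. As they generate $\Gamma(X)$, the orbit of $[h]$ lands in a finite set and is therefore finite, so the index is finite. Conversely, if $h$ is not periodic, then its shift orbit $\{\sigma^n h:n\in\ZZ\}$ is infinite, because a bi-infinite sequence is periodic exactly when its shift orbit is finite. Since $\sigma$ is realized by the generator $\rho\in\Gamma(X)$, the $\Gamma(X)$--orbit of $[h]$ contains (the image of) this infinite set; it remains to observe that the equivalence relation cutting $\TheSequences_G$ out of $G^{\ZZ}$ is generated by finitely many symmetries of finite order, and so cannot collapse an infinite set of shift-translates to finitely many classes. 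Hence the orbit is infinite and the index is infinite, which completes the argument.
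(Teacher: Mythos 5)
Your overall architecture is the same as the paper's: identify $\Gamma(Y_h)$ with the stabilizer of $[h]$ under the $\Gamma(X)$--action on defining vectors (the paper's \cref{VGasStabiliser}), so that finite index is equivalent to a finite orbit, and then control the orbit through the explicit action of a generating set. However, the step you defer as the ``main obstacle'' --- computing that action --- is exactly where your structural guesses go wrong, and the downstream argument fails as stated. First, the hyperbolic element does \emph{not} act as the shift: over $\ZZ/2\ZZ$, the paper's \cref{ComputeTheGenerators,prop:action_H_n} give $(H\cdot h)_k = h_{k-1}+h_{-1}$ and $(H^n\cdot h)_k = h_{k-n}+h_{-n}$, i.e., a shift twisted by an $h$--dependent constant. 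Consequently the fixed points of $H^n$ are not the $n$--periodic vectors but the \emph{weakly} $n$--periodic ones ($h_{k+n}-h_k = h_n$ for all $k$); for example, the alternating vector $(\dotsc,1,0,1,0,\dotsc)$ is fixed by $H$ itself yet has period $2$, not $1$. Your converse direction (``not periodic $\Rightarrow$ the shift orbit is infinite, and $\sigma$ is realized by $\rho$'') therefore rests on a false identification; the correct route is that weak $n$--periodicity implies $2n$--periodicity, so a non-periodic $h$ is fixed by no power of $H$ and its $\langle H\rangle$--orbit is infinite --- which is exactly how the paper argues.

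Second, the parabolic generators are not given by a ``shift-equivariant $\ZZ/2\ZZ$--linear recombination of finitely many neighboring entries,'' and they do not commute with $\sigma$: over $\ZZ/2\ZZ$ the formulas of \cref{ComputeTheGenerators} reduce to $(P_1\cdot h)_k = h_{-k}$ and $(P_2\cdot h)_k = h_{-k-1}+h_{-1}$, i.e., \emph{reflections} (up to an additive constant), which conjugate $\sigma$ to $\sigma^{-1}$ rather than commuting with it. So the mechanism you invoke for invariance of the finite set of period-$N$ vectors is not available. It happens to be repairable --- a reflection maps $\mathrm{Fix}(\sigma^N)$ onto $\mathrm{Fix}(\sigma^{-N}) = \mathrm{Fix}(\sigma^N)$, so periodicity is still preserved --- but that is a different argument from the one you give, and it only becomes visible once the correct formulas are in hand. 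The paper instead works with the set $W_n$ of weakly $n$--periodic vectors: it shows $W_n$ is exactly the fixed-point set of $H^n$ (\cref{lem:fixed_points_H_n_degree_2}), verifies its $\Gamma(X)$--invariance by direct computation with the reflection formulas (\cref{lem:Veech_group_preserves_weak_periodicity}), and finally observes that $\bigcup_n W_n$ is precisely the set of periodic vectors, which yields \cref{thm:criterion_finite_index_p=2}. In short: your plan is close in spirit to the paper's and salvageable, but both concrete structural claims it hinges on (shift action of $H$; locality and shift-equivariance of the parabolics) are false, so the proposal as written does not constitute a proof.
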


We give a more technical version of \cref{thm:criterion_finite_index_p=2} in
\cref{sec:d=2} which also allows us to determine the index and hence the rank of
the free group $\Gamma(Y_h)/\{\pm \Id\}$. Moreover, in the degree--$2$
case, we describe all Veech groups $\Gamma(Y_h)$ which are of finite index in
$\Gamma(X)$ explicitly by their Schreier coset graph, see
\cref{prop:all_Schreier_coset_graphs}.

A key tool in our analysis is the action of the hyperbolic element $H = \left(\begin{smallmatrix} 2 & 0 \\
	0 & \frac{1}{2} \end{smallmatrix}\right) \in \Gamma(X)$ 
on the defining vectors, for which we obtain precise formulas
in \cref{ComputeTheGenerators}. Finally, we deduce
\cref{thm:criterion_finite_index_p=2} from the following more general result
about finite abelian covers of arbitrary degree.

\begin{thm}[Characterization of finite index]\label{thm:characterization_finite_index}
	Let $G$ be a finite abelian group and $h\in \TheSequences_G$ a defining
	vector of a cover $Y_h$. Then $\Gamma(Y_h)$ has finite index in $\Gamma(X)$
	if and only if $h$ is a fixed point of $H^n$ for some $n\in \mathbb{N}$.
\end{thm}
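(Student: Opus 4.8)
The plan is to realize $\Gamma(Y_h)$ as a point stabilizer for the natural action of $\Gamma(X)$ on $\TheSequences_G$ and then translate the finite-index condition into finiteness of an orbit. Concretely, every $A\in\Gamma(X)$ is the derivative of an affine homeomorphism $f$ of $X$, and $f$ acts on normal $G$-covers by pullback, inducing an action $A\cdot h$ on $\TheSequences_G$. By the everything descends property (\cref{prop:everything_descends}), $f$ lifts to an affine homeomorphism of $Y_h$ exactly when $Y_{A\cdot h}\cong Y_h$, i.e.\ when $A\cdot h=h$ in $\TheSequences_G$. First I would make this precise to obtain $\Gamma(Y_h)=\mathrm{Stab}_{\Gamma(X)}(h)$, so that by orbit-stabilizer, $\Gamma(Y_h)$ has finite index in $\Gamma(X)$ if and only if the orbit $\Gamma(X)\cdot h$ is finite.

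Granting this reduction, the forward direction is immediate: if $\Gamma(Y_h)$ has finite index, then $\langle H\rangle\cap\Gamma(Y_h)$ has index at most $[\Gamma(X):\Gamma(Y_h)]$ in the infinite cyclic group $\langle H\rangle$ (note that $H=\left(\begin{smallmatrix}2&0\\0&1/2\end{smallmatrix}\right)$ is hyperbolic, hence of infinite order even projectively), so this intersection is nontrivial. Thus $H^n\in\Gamma(Y_h)$ for some $n\in\mathbb{N}$, which is precisely the statement $H^nh=h$.

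The substance is the converse, and here I would use the explicit description of the action from \cref{sec:monodromy} together with the formulas of \cref{ComputeTheGenerators}. The point is that $H$ acts on $\TheSequences_G$ essentially as the shift on $G^\mathbb{Z}$, so the condition $H^nh=h$ says exactly that $h$ is $n$-periodic; in particular the set $\mathrm{Fix}(H^n)\subseteq\TheSequences_G$ of $H^n$-fixed vectors is finite, of size at most $|G|^n$. I would then prove the key structural lemma that the remaining generator(s) of $\Gamma(X)$ act on $\TheSequences_G$ by maps that commute with the shift --- or at worst conjugate it to a power of itself --- which is where the hypothesis that $G$ is abelian enters, since it makes these actions affine and hence expressible as sliding-block codes over $G$. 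Any such map preserves $n$-periodicity, so $\mathrm{Fix}(H^n)$ is invariant under all of $\Gamma(X)$. Consequently the orbit $\Gamma(X)\cdot h$ is contained in the finite set $\mathrm{Fix}(H^n)$ and is therefore finite, giving finite index. As a consistency check, this recovers \cref{thm:criterion_finite_index_p=2}: a non-periodic $h$ has an infinite $\langle H\rangle$-orbit and hence infinite index, while periodicity is exactly $H^n$-invariance.

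I expect the main obstacle to be establishing this structural lemma --- that every generator of $\Gamma(X)$ preserves the set of $n$-periodic vectors, with no blow-up of the period along arbitrarily long words in the group. Controlling the period is delicate because the non-$H$ generators correspond geometrically to (multi)twists whose effect on the monodromy vector mixes neighbouring coordinates; the resolution should be to verify from the explicit formulas that these maps are shift-equivariant sliding-block codes over the abelian group $G$, so that period $n$ is preserved exactly rather than merely up to a bounded factor. A secondary point to handle carefully is the subquotient structure of $\TheSequences_G$, so that being \emph{$n$-periodic} descends to a well-defined, $\Gamma(X)$-invariant, finite set of classes.
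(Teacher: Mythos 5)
Your reduction to orbit finiteness via the stabilizer description (\cref{VGasStabiliser}) and your forward direction are correct and coincide with the paper's argument. The gap is in the converse, and it sits exactly where you predicted the main obstacle would be: the key structural lemma is false for general finite abelian $G$. First, a smaller point: $H$ does not act as the shift. Because of the normalization $h_0=0$, \cref{ComputeTheGenerators} gives $(H\cdot h)_k=h_{k-1}+h_{-1}$ for $k\neq 1$ and $(H\cdot h)_1=-h_{-1}$, so fixed points of $H^n$ are only \emph{weakly} periodic (periodic up to an additive constant), which forces $dn$--periodicity with $d=|G|$ rather than $n$--periodicity; the fixed-point set is still finite, so this alone is harmless. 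The fatal point is the claimed invariance: $P_1$ and $P_2$ are \emph{not} shift-equivariant sliding-block codes. By \cref{ComputeTheGenerators}, $(P_1\cdot h)_k$ depends on \emph{all} entries $h_{\pm j}$ with $j\le|k|$ (a cumulative sum composed with the index reversal $k\mapsto -k$), so it is not a local rule and does not commute with the shift. Group theory already forbids your lemma: in the free group $P\Gamma(X)$, the element $P_1HP_1^{-1}$ is not a power of $H$, so if $H^n$ fixes $h$, then $P_1\cdot h$ is a priori fixed only by the conjugate $P_1H^nP_1^{-1}$, not by any $H^m$ with controlled $m$.

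This failure is not hypothetical. Take $G=\ZZ/3\ZZ$ and let $h$ be the fixed point of $H$ with $h_{-1}=1$, i.e.\ $h_k=(k-2)\bmod 3$ for $k\ge 1$ and $h_{-j}=(2-j)\bmod 3$ for $j\ge 1$. Computing with \cref{ComputeTheGenerators}, $g\coloneq P_1\cdot h$ has forward entries $1,1,0,1,1,0,\dots$ and backward entries $0,1,1,0,1,1,\dots$; then $(H\cdot g)_1=-g_{-1}=0$ while $g_1=1$ and $(-g)_1=2$, so $H\cdot g\notin\{g,-g\}$ and hence the class of $g$ in $\TheSequencesQuot_G$ is not fixed by $H$ (it is fixed by $H^3$). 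So $\mathrm{Fix}(H^n)$ for a fixed $n$ is not $\Gamma(X)$--invariant; only the union $\bigcup_n\mathrm{Fix}(H^n)$ is, and that statement (\cref{cor:fixed_points_H_n_vs_elements_of_C_n}) is a \emph{consequence} of the theorem, not an available tool. The paper's repair is to replace $\mathrm{Fix}(H^n)$ by a strictly larger but still finite set $C_n$ (\cref{def:C_n}), cut out by linear conditions: $dn$--periodicity in both directions, vanishing period sum, and the forper--backper relations. It shows $\mathrm{Fix}(H^n)\subseteq C_n$ (\cref{prop:fixed_points_H_n_in_C_n}) and, by a long explicit computation with the action formulas, that $C_n$ \emph{is} $\Gamma(X)$--invariant (\cref{thm:invariant_set}); the orbit of $h$ is then trapped in the finite set $C_n$. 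Your argument does work verbatim when $G=\ZZ/2\ZZ$: there the fixed-point sets $W_n$ of $H^n$ are genuinely invariant (\cref{lem:Veech_group_preserves_weak_periodicity}), essentially because $x=-x$ neutralizes the index-reversal and sign issues, and this is exactly how the paper proves \cref{thm:criterion_finite_index_p=2} in \cref{sec:d=2}. But for the general finite abelian case, which is what this theorem asserts, the invariant set must be enlarged, and finding the right enlargement is the actual content of the paper's proof.
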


As a consequence of
\cref{thm:characterization_finite_index}, we obtain the following conclusion about devious covers in
the sense of~\cite{hooper_trevino_19}, see \cref{sec:devious} for the relevant
definitions and the proof.

\begin{corintro}[Devious covers have infinite-index Veech groups]\hypertarget{cor:devious}
	The Veech group of a \emph{devious} finite abelian cover of the Chamanara
	surface has infinite index in the Veech group of the Chamanara surface
	itself.
\end{corintro}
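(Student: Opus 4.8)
The plan is to reduce the Corollary to the previously established Theorem (Characterization of finite index). Since the statement concerns a devious finite abelian cover, I first recall the definition of deviousness from~\cite{hooper_trevino_19}. Deviousness is a property of the defining vector that, intuitively, captures the cover being ``genuinely infinite'' in a dynamical sense; the key point is that a devious cover is defined by an aperiodic pattern in the monodromy data. So the first step is to unpack the definition of a devious defining vector $h \in \TheSequences_G$ and to see exactly how deviousness constrains the combinatorial structure of $h$.

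The heart of the argument is the following implication: if $h$ is devious, then $h$ cannot be a fixed point of $H^n$ for any $n \in \mathbb{N}$. By \cref{thm:characterization_finite_index}, $\Gamma(Y_h)$ has finite index in $\Gamma(X)$ if and only if $h$ is a periodic point of the $H$--action, i.e.\ fixed by some $H^n$. Hence, to prove that devious covers have infinite-index Veech groups, it suffices to show that deviousness is incompatible with $H^n$--periodicity. The cleanest route is to prove the contrapositive: every $H^n$--fixed point $h$ fails to be devious. This hinges on understanding the precise action of $H$ on defining vectors, for which the paper has already recorded explicit formulas in \cref{ComputeTheGenerators}; an $H^n$--fixed point should force the monodromy vector $h$ to have an eventually periodic (or otherwise non-devious) structure under the shift-and-twist dynamics encoded by $H$. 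I would therefore translate the condition $H^n \cdot h = h$ into a recurrence on the coordinates of $h$, and then check directly that the resulting periodicity is exactly what deviousness prohibits.

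The main obstacle I anticipate is bookkeeping at the interface between two different combinatorial descriptions: the definition of deviousness inherited from~\cite{hooper_trevino_19} is phrased in the language of their general framework, while the $H$--action on $\TheSequences_G$ is described in the coordinates native to this paper. Reconciling these — i.e.\ verifying that ``$h$ is a fixed point of some $H^n$'' and ``$h$ is not devious'' really do describe the same class of defining vectors, or at least that the former is contained in the complement of the latter — is where the care is needed, since an off-by-one indexing error or a mismatch in how $G^{\mathbb{Z}}$ is quotiented to form $\TheSequences_G$ could break the equivalence. Assuming that dictionary is set up correctly, the logical skeleton is short: deviousness $\Rightarrow$ aperiodicity under $H$ $\Rightarrow$ (by \cref{thm:characterization_finite_index}) infinite index. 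I would close by stating that the full verification of the combinatorial incompatibility is deferred to \cref{sec:devious}, where the relevant definitions live, so that the corollary follows immediately once deviousness is shown to exclude $H^n$--fixed points.
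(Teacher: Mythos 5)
Your high-level skeleton agrees with the paper's: argue the contrapositive, i.e.\ show that if $\Gamma(Y_h)$ has finite index in $\Gamma(X)$, then (by \cref{thm:characterization_finite_index}) $h$ is fixed by some $H^n$, and conclude that such a cover cannot be devious. However, there is a genuine gap in how you propose to carry out the crucial last step, and it stems from a mischaracterization of deviousness. You treat deviousness as a combinatorial property of the defining vector $h$ (``an aperiodic pattern in the monodromy data'') and plan to derive a contradiction between $H^n$-periodicity of $h$ and this supposed aperiodicity ``under the shift-and-twist dynamics''. But deviousness, as defined in \cite{hooper_trevino_19} and recalled in \cref{sec:devious}, is not a condition on $h$ at all: a connected cover $Y$ is devious if its \emph{Teichm\"uller orbit} $\{g_t \cdot Y : t \in \RR\}$ in $\widetilde{\mathcal{O}}_G(X)$ accumulates only on \emph{disconnected} covers. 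There is no combinatorial aperiodicity clause to contradict, so the ``verification deferred to \cref{sec:devious}'' at the end of your proposal — which is where the entire content of the proof would have to live — cannot be completed along the lines you describe.

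The missing idea that bridges the two sides is the identification $H = g_{\log 2}$: the hyperbolic element is the time-$\log 2$ map of the Teichm\"uller geodesic flow. This gives two things at once. First, $H \in \Gamma(X)$ shows that $X$ has non-divergent (indeed periodic) Teichm\"uller orbit, which is the standing hypothesis under which deviousness is even defined. Second, if $\Gamma(Y_h)$ had finite index, then \cref{thm:characterization_finite_index} yields $H^n \in \Gamma(Y_h)$ for some $n$, which says precisely that $g_{n\log 2}\cdot Y_h$ is translation equivalent to $Y_h$; hence the Teichm\"uller orbit of $Y_h$ is periodic and accumulates on $Y_h$ itself, a \emph{connected} cover, so $Y_h$ is not devious. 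No coordinate computation with the formulas of \cref{ComputeTheGenerators}, and no recurrence on the entries of $h$, is needed or relevant: the argument is dynamical, not combinatorial. Without the identification of $H$ with the geodesic flow, your reduction has no way to connect ``fixed by $H^n$'' to the accumulation behaviour that defines deviousness.
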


Examples of devious finite covers were obtained
in~\cite[Section~7]{hooper_trevino_19}, using deck groups inside~$S_n$, the
permutation group of $\{1, \dotsc, n\}$. The above corollary
could be used to look for examples of \emph{abelian} devious finite covers.

Finally, we study which topology the constructed finite covers of the Chamanara
surface have. Whereas all degree--$2$ covers of the Chamanara
surface are Loch Ness monsters or  Jacob's ladders, see \cref{thm:topology_d=2}, we can obtain an arbitrary number of ends when we allow any finite~degree.

\begin{thm}[Topology of covers] \label{thmnew:topology}
	For every $d\in \mathbb{N}$, there exists a finite abelian cover of the
	Chamanara surface with $d$ ends.
\end{thm}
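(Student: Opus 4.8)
The plan is to read off the number of ends of a cover $Y_h$ directly from its defining vector $h \in \TheSequences_G$, and then to choose $G$ and $h$ realizing exactly $d$ ends. Since the Chamanara surface $X$ has a single end, accumulating at its unique wild singularity, every end of a finite cover $Y_h \to X$ lies over this one end. Fix an exhaustion of $X$ by compact sets $K$, so that each punctured neighborhood $X \setminus K$ of the singularity is connected. The connected components of the preimage of $X \setminus K$ in $Y_h$ are in bijection with the coset space $G / \phi_K(\pi_1(X \setminus K))$, where $\phi\colon \pi_1(X)\to G$ is the monodromy of the cover and $\phi_K$ its restriction. As $K$ exhausts $X$ the subgroups $\phi_K(\pi_1(X\setminus K))$ form a decreasing chain inside the finite group $G$, so they stabilize at a subgroup $P_h \leq G$, and the number of ends of $Y_h$ equals the index $[G : P_h]$.

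The first step is thus to identify $P_h$ in terms of the defining vector. Using the $\ZZ$-indexed self-similar decomposition of $X$ from \cref{sec:monodromy}, on which the hyperbolic element $H$ acts as a shift, a cofinal family of punctured neighborhoods of the singularity is obtained by discarding the finitely many pieces of bounded scale. The loops surviving in such a neighborhood are those encircling the singularity at large scale $|n|\geq N$, and their monodromy is recorded by the tail entries $h_n$ with $|n|\geq N$. I expect to show that $P_h$ is the subgroup generated by these tail entries (equivalently, by their successive differences $h_{n+1}-h_n$), so that $P_h = \langle h_n : |n| \geq N\rangle$ for all sufficiently large $N$. This is consistent with \cref{thm:topology_d=2}: for $G=\ZZ/2\ZZ$ one gets $[G:P_h]\in\{1,2\}$, i.e.\ a Loch Ness monster or a Jacob's ladder.

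Granting the formula that $Y_h$ has $[G:P_h]$ ends, the construction is immediate. Take $G = \ZZ/d\ZZ$ and let $h$ have finite support, say $h_0 = 1$ and $h_n = 0$ for all $n\neq 0$, after checking that such a vector is admissible, i.e.\ lies in $\TheSequences_G$. Then the total monodromy image $\langle h_n : n\in\ZZ\rangle = \langle 1\rangle = \ZZ/d\ZZ$ is all of $G$, so $Y_h$ is a connected cover, while the tail subgroup $P_h$ is trivial for every $N\geq 1$. Hence $Y_h$ has $[G : \{0\}] = |G| = d$ ends, and being a finite cover of the finite-area surface $X$ it is itself an infinite translation surface of finite area.

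The main obstacle is the middle step: making rigorous the identification of the stable peripheral image $P_h$ with the tail of $h$. The difficulty is that the singularity of $X$ is wild, so a punctured neighborhood is far from an annulus and its fundamental group is infinitely generated; one must verify that, after passing to the finite quotient $G$, only the tail entries $h_n$ contribute and that the image genuinely stabilizes rather than merely being generated by all of the $h_n$. Choosing $h$ of finite support sidesteps any delicate dependence on the precise peripheral generators, since any combination of finitely many vanishing tail entries is trivial, which is why the construction above is robust even before the exact form of $P_h$ is pinned down. Once $P_h$ is controlled, the remaining claims — connectedness, finiteness of area, and that $Y_h$ is a genuine translation cover — are routine.
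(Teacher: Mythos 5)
Your overall framework---counting the ends of $Y_h$ as the stable index $[G:P_h]$ of the image in $G$ of the fundamental groups of small punctured neighborhoods of the singularity---is sound, and it is in fact the skeleton of the paper's own argument, where this subgroup is called $G'$ and the number of ends is shown to be $[G:G']$ (\cref{prop:criterion_identification_corners}). The genuine gap is your conjectured formula $P_h = \langle h_n : |n| \geq N\rangle$: it is false, and your construction collapses with it. Because the singularity is wild, a punctured neighborhood contains more short loops than those crossing only the large-scale edges $a_m, b_m$ with $m \geq N$. There are arbitrarily short paths that cross the small edges $b_{N-1}, \ldots, b_1, a_1, \ldots, a_N$ \emph{successively} while staying close to the singular point (see \cref{fig:rotational_components} and \cref{lem:sufficient_identification_II}); their monodromy is the alternating sum $\sum_{k=-N}^{N-1} (-1)^k h_k$, which therefore always lies in the stable peripheral subgroup. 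The correct statement is that $G'$ is generated by this alternating sum \emph{together with} the differences of accumulation points of the two tails. So the finitely many non-tail entries do contribute, precisely through the alternating sum, and choosing $h$ of finite support does not sidestep the issue---it makes the issue decisive.

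Concretely, your vector is not even admissible as written, since $\TheSequences_G$ requires $h_0 = 0$; and the obvious repair ($G = \ZZ/d\ZZ$, $h_1 = 1$, all other entries $0$) has alternating sum $-1$, which generates $G$. Hence $G' = G$, and by \cref{prop:criterion_identification_corners} this cover has exactly \emph{one} end: it is a Loch Ness monster, not a $d$-ended surface. The failure is already visible for $d=2$ in \cref{thm:topology_d=2}: a vector bi-eventually equal to $0$ with an \emph{odd} number of entries equal to $1$ gives a Loch Ness monster. Once the correct criterion is in hand the repair is cheap: keep both tails eventually $0$ (so the tail contribution to $G'$ is trivial) and additionally force the alternating sum to vanish while retaining generation of $G$; for instance $h_1 = h_2 = 1$ and $h_n = 0$ otherwise satisfies the hypotheses of \cref{thm:topology} and yields a connected cover with $[G:G'] = d$ ends.
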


A more detailed version of the statement of \cref{thmnew:topology} can be
  found in \cref{thm:topology}.
  In particular, there and in \cref{prop:criterion_identification_corners}, we relate the number of ends to the deck group $G$ and the defining vector $h$ of the cover.

\paragraph{Outline}
We introduce the Chamanara surface in \cref{sec:chamanara} and discuss Veech
group elements that lift to all covers in \cref{sec:lifting}. In
\cref{sec:monodromy}, we explain how to describe covers with monodromy vectors
and in \cref{sec:action}, we compute the action of $\Gamma(X)$ on these vectors.
Our main theorem (\cref{thm:characterization_finite_index}) is proven in
\cref{sec:characterization_finite_index} and the special case of $d=2$ is
derived in \cref{sec:d=2}. We conclude with the proof of the \hyperlink{cor:devious}{Corollary to Theorem 3} in
\cref{sec:devious} and a discussion of the topology of the covers in \cref{sec:topology}.

\paragraph{Acknowledgements}
This research project was supported by AIM’s research program Structured Quartet
Research Ensembles (SQuaREs).
The work of A.~Randecker was partially funded by the Deutsche Forschungsgemeinschaft (DFG, German Research Foundation) -- 441856315.
F.~Valdez thanks grants IN101422 and IN106925 from DGAPA's program PAPIIT for financial support.

\section{The Chamanara surface and its Veech group}\label{sec:chamanara} A
translation surface can be described by considering polygons in the Euclidean
plane whose edges are identified in pairs that differ by translation. The
induced metric on the surface is locally Euclidean, except at the vertices,
where singularities may occur. Classically, this is done with finitely many
polygons but it can also be done with infinitely many polygons or polygons with
infinitely many sides as in our case. The natural action of $\SL(2,\RR)$ on $\RR^2$
induces an action on the set of Euclidean polygons and therefore an action on
the class of translation surfaces.
For more background on finite and infinite translation surfaces, we refer the reader to,~e.g.,~\cite{AM,DHV:infinite}.

In the introduction, we gave a definition of the Veech group as the stabilizer
under the $\SL(2,\RR)$--action. Equivalently, one can define the Veech group as
follows. Consider the group $\Aff(X)$ of affine diffeomorphisms, i.e.,
diffeomorphisms of $X$ which are affine on charts. Then,~$\Gamma(X)$ is the image of
the derivative map $D\colon \Aff(X) \to \GL(2,\R)$. The elements in the kernel
of $D$ are \emph{translations}, i.e., they are diffeomorphisms which are
translations on charts, and we denote $\Trans(X) \coloneq \ker(D)$. In particular, we
have the following short exact sequence
\begin{equation}\label{eq:short_exact_seq}
	1 \to \Trans(X) \to \Aff(X) \xrightarrow{D} \Gamma(X) \to 1.
\end{equation}

The Chamanara surface $X$ is the translation surface that is constructed from a unit
square with gluings as indicated in \cref{fig:chamanara}, see \cite{chamanara_04, herrlich_randecker_16}.
We specify coordinates on the Chamanara surface by defining that the segments $a_n$ are vertical and the segments $b_n$ are horizontal, the origin~$(0,0)$ is in the lower left corner, and the length of the segment $a_n$ as well as of the segment~$b_n$ is $\sfrac{1}{2^n}$ for~$n\in \mathbb{N}$.

\begin{figure}[b]
	\centering
	\begin{tikzpicture}[x=1cm,y=1cm,scale=0.5]
		\newcommand\chamanaraseite{
			\draw (0,0) -- (7.75,0);
			\draw[densely dotted] (7.75,0) -- (8,0);
			\fill (0,0) circle (2pt);
			\fill (4,0) circle (2pt);
			\fill (6,0) circle (2pt);
			\fill (7,0) circle (2pt);
			\fill (7.5,0) circle (2pt);
			\fill (7.75,0) circle (2pt);
		}

		\draw[pattern color=gray!60, pattern=dots] (0,0) -- (8,8) -- (4,8) -- (0,4) -- (0,0);
		\draw[pattern color=gray!60, pattern=dots] (0,0) -- (4,0) -- (8,4) -- (8,8) -- (0,0);

		\draw[pattern color=gray!60, pattern=bricks] (0,4) -- (4,8) -- (2,8) -- (0,6) -- (0,4);
		\draw[pattern color=gray!60, pattern=bricks] (4,0) -- (6,0) -- (8,2) -- (8,4) -- (4,0);

		\draw[pattern color=gray!60, pattern=north west lines] (0,6) -- (2,8) -- (1,8) -- (0,7) -- (0,6);
		\draw[pattern color=gray!60, pattern=north west lines] (6,0) -- (7,0) -- (8,1) -- (8,2) -- (6,0);

		\draw[pattern color=gray!60, pattern=grid] (0,7) -- (1,8) -- (0.5,8) -- (0,7.5) -- (0,7);
		\draw[pattern color=gray!60, pattern=grid] (7,0) -- (7.5,0) -- (8,0.5) -- (8,1) -- (7,0);

		\chamanaraseite
		\path (0,0) -- node[below] {$b_1$} (4,0) -- node[below] {$b_2$} (6,0) -- node[below] {$b_3$} (7,0) -- node[below] {$b_4$} (7.5,0);

		\begin{scope}[xscale=-1,yscale=-1,xshift=-8cm,yshift=-8cm]
			\chamanaraseite
			\path (0,0) -- node[above] {$b_1$} (4,0) -- node[above] {$b_2$} (6,0) -- node[above] {$b_3$} (7,0) -- node[above] {$b_4$} (7.5,0);
		\end{scope}

		\begin{scope}[xscale=-1,rotate=90]
			\chamanaraseite
			\path (0,0) -- node[left] {$a_1$} (4,0) -- node[left] {$a_2$} (6,0) -- node[left] {$a_3$} (7,0) -- node[left] {$a_4$} (7.5,0);
		\end{scope}

		\begin{scope}[xscale=-1,rotate=-90,xshift=-8cm,yshift=-8cm]
			\chamanaraseite
			\path (0,0) -- node[right] {$a_1$} (4,0) -- node[right] {$a_2$} (6,0) -- node[right] {$a_3$} (7,0) -- node[right] {$a_4$} (7.5,0);
		\end{scope}

		\begin{scope}[xshift=12cm]
			\draw[pattern color=gray!60, pattern=dots] (0,0) -- (4,0) -- (8,8) -- (4,8) -- (0,0);

			\draw[pattern color=gray!60, pattern=bricks] (0,0) -- (4,8) -- (2,8) -- (0,4) -- (0,0);
			\draw[pattern color=gray!60, pattern=bricks] (4,0) -- (6,0) -- (8,4) -- (8,8) -- (4,0);

			\draw[pattern color=gray!60, pattern=north west lines] (0,4) -- (2,8) -- (1,8) -- (0,6) -- (0,4);
			\draw[pattern color=gray!60, pattern=north west lines] (6,0) -- (7,0) -- (8,2) -- (8,4) -- (6,0);

			\draw[pattern color=gray!60, pattern=grid] (0,6) -- (1,8) -- (0.5,8) -- (0,7) -- (0,6);
			\draw[pattern color=gray!60, pattern=grid] (7,0) -- (7.5,0) -- (8,1) -- (8,2) -- (7,0);

			\draw[pattern color=gray!60, pattern=crosshatch dots] (0,7) -- (0.5,8) -- (0.25,8) -- (0,7.5) -- (0,7);
			\draw[pattern color=gray!60, pattern=crosshatch dots] (7.5,0) -- (7.75,0) -- (8,0.5) -- (8,1) -- (7.5,0);

			\chamanaraseite
			\path (0,0) -- node[below] {$b_1$} (4,0) -- node[below] {$b_2$} (6,0) -- node[below] {$b_3$} (7,0) -- node[below] {$b_4$} (7.5,0);

			\begin{scope}[xscale=-1,yscale=-1,xshift=-8cm,yshift=-8cm]
				\chamanaraseite
				\path (0,0) -- node[above] {$b_1$} (4,0) -- node[above] {$b_2$} (6,0) -- node[above] {$b_3$} (7,0) -- node[above] {$b_4$} (7.5,0);
			\end{scope}

			\begin{scope}[xscale=-1,rotate=90]
				\chamanaraseite
				\path (0,0) -- node[left] {$a_1$} (4,0) -- node[left] {$a_2$} (6,0) -- node[left] {$a_3$} (7,0) -- node[left] {$a_4$} (7.5,0);
			\end{scope}

			\begin{scope}[xscale=-1,rotate=-90,xshift=-8cm,yshift=-8cm]
				\chamanaraseite
				\path (0,0) -- node[right] {$a_1$} (4,0) -- node[right] {$a_2$} (6,0) -- node[right] {$a_3$} (7,0) -- node[right] {$a_4$} (7.5,0);
			\end{scope}
		\end{scope}
	\end{tikzpicture}
	\caption{Chamanara surface with a cylinder decomposition of slope $1$ (left) and of slope $2$ (right).}
	\label{fig:cylinder_decomposition_chamanara_slope}
\end{figure}

The Chamanara surface has one singularity (which induces one end of the topological surface) and countably many directions in
which the surface can be decomposed into cylinders. We recall that a cylinder is
a subset which is isometric to $\mathbb{R} / w\ZZ \times [0,h]$, where $w$ is
called the width and $h$ the height. The modulus of a cylinder is the ratio
$\sfrac{h}{w}$. There are two types of cylinder decompositions in the Chamanara surface, as can be seen
in \cref{fig:cylinder_decomposition_chamanara_slope}. In the first type of
decomposition, all cylinders have the same modulus. In the second type of
decomposition, the modulus of the middle cylinder is three times larger than all
the other~moduli.

From the cylinder decompositions of slope $1$ and of slope $2$, we obtain the
parabolic elements
\begin{equation*}
	P_1 =
	\begin{pmatrix}
		4 & -3 \\
		3 & -2
	\end{pmatrix}
	\qquad
	\text{and}
	\qquad
	P_2 =
	\begin{pmatrix}
		4 & -\frac{3}{2} \\
		6 & -2
	\end{pmatrix}
\end{equation*}
in the Veech group. The corresponding affine homeomorphisms fix the boundaries of the respective cylinders pointwise and act as Dehn multitwists with respect to the core curves.
Chamanara showed in~\cite[Theorem 4]{chamanara_04}
that the Veech group~$\Gamma(X)$ is generated by $P_1, P_2$, and $-\Id$, see also \cite{herrlich_randecker_16}. In particular, the \emph{projective Veech group} $P\Gamma(X) \coloneqq
\Gamma(X) / \{\pm \Id\}$ is a free group, freely generated by~$P_1$ and $P_2$.

From the description in \cref{fig:chamanara}, it can be deduced geometrically that
\begin{equation*}
	H = \begin{pmatrix}2&0\\0&\frac{1}{2}\end{pmatrix}
\end{equation*}
is an element of the Veech group $\Gamma(X)$. A direct computation shows that $H = -P_1P_2$.

A \emph{saddle connection} of a translation surface $X$ is a geodesic segment whose interior lies in~$X$ and both boundary
points are singularities.
Since $b_1$ as in \cref{fig:chamanara} is the unique horizontal saddle
connection of length $\sfrac{1}{2}$, the translation group of the Chamanara
surface is trivial. Hence, from the short exact
sequence~\eqref{eq:short_exact_seq}, we have that $\Aff(X) \cong \Gamma(X)$.

\begin{figure}
	\centering
	\begin{tikzpicture}[x=1cm,y=1cm, scale=0.5,
		   mid arrow/.style={
			   postaction={decorate,decoration={
				   markings,
				   mark=at position .5 with {\arrow{latex}}
		   }}
		 }]
	 \draw (0,0) -- node[below] {$b_1$}
		   (4,0) -- node[below] {$b_2$}
		   (6,0) -- node[below] {$b_3$}
		   (7,0) -- node[below] {$b_4$}
		   (7.5,0) -- (7.75,0);
	 \draw[densely dotted] (7.75,0) -- (8,0);

	 \fill (0,0) circle (2pt);
	 \fill (8,8) circle (2pt);

	 \fill (4,0) circle (2pt);
	 \fill (6,0) circle (2pt);
	 \fill (7,0) circle (2pt);
	 \fill (7.5,0) circle (2pt);
	 \fill (7.75,0) circle (2pt);

	 \draw (8,8) -- node[above] {$b_1$}
		   (4,8) -- node[above] {$b_2$}
		   (2,8) -- node[above] {$b_3$}
		   (1,8) -- node[above] {$b_4$}
		   (0.5,8) -- (0.25,8);
	 \draw[densely dotted] (0.25,8) -- (0,8);

	 \fill (4,8) circle (2pt);
	 \fill (2,8) circle (2pt);
	 \fill (1,8) circle (2pt);
	 \fill (0.5,8) circle (2pt);
	 \fill (0.25,8) circle (2pt);

	 \draw (0,0) -- node[left] {$a_1$}
		   (0,4) -- node[left] {$a_2$}
		   (0,6) -- node[left] {$a_3$}
		   (0,7) -- node[left] {$a_4$}
		   (0,7.5) -- (0,7.75);
	 \draw[densely dotted] (0,7.75) -- (0,8);

	 \fill (0,4) circle (2pt);
	 \fill (0,6) circle (2pt);
	 \fill (0,7) circle (2pt);
	 \fill (0,7.5) circle (2pt);
	 \fill (0,7.75) circle (2pt);

	 \draw (8,8) -- node[right] {$a_1$}
		   (8,4) -- node[right] {$a_2$}
		   (8,2) -- node[right] {$a_3$}
		   (8,1) -- node[right] {$a_4$}
		   (8,0.5) -- (8,0.25);
	 \draw[densely dotted] (8,0.25) -- (8,0);

	 \fill (8,4) circle (2pt);
	 \fill (8,2) circle (2pt);
	 \fill (8,1) circle (2pt);
	 \fill (8,0.5) circle (2pt);
	 \fill (8,0.25) circle (2pt);

	 \fill (4,4) node[below] {$x$} circle (3pt);
	 \draw[densely dashed, mid arrow] (4,4) .. controls +(35:1cm) and +(-150:0.7cm) .. (6,5.2) node[below] {$\alpha_1$}
	 .. controls +(30:0.5cm) and +(180:1cm) .. (8,5.5);
	 \draw[densely dashed] (0,1.5) .. controls +(0:1cm) and +(-130:0.7cm) .. (3,3)
	 .. controls +(50:0.5cm) and +(-145:1cm) .. (4,4);
	 \draw[loosely dashed, mid arrow] (4,4) .. controls +(-30:1cm) and +(160:0.7cm) .. (6,3.2) node[below] {$\alpha_2$}
	 .. controls +(-20:0.5cm) and +(180:1cm) .. (8,3);
	 \draw[loosely dashed] (0,5) .. controls +(0:1cm) and +(160:0.7cm) .. (3,4.6)
	 .. controls +(-20:0.7cm) and +(150:1cm) .. (4,4);
	 \draw[densely dotted] (5.5,8) .. controls +(-90:0.5cm) and +(70:0.5cm) .. (5.3,6.4) node[right] {$\beta_1$}
	 .. controls +(-110:1cm) and +(50:1cm) .. (4,4);
	 \draw[densely dotted, postaction={decorate,decoration={
	 		markings,
	 		mark=at position .5 with {\arrow{latex reversed}}}}] (1.5,0) .. controls +(90:0.2cm) and +(-150:1.4cm) .. (2.8,1.6)
	 .. controls +(30:0.9cm) and +(-130:0.4cm) .. (4,4);
	 \draw[dotted] (3,8) .. controls +(-90:0.5cm) and +(100:0.3cm) .. (3.1,6.4) node[right] {$\beta_2$}
	 .. controls +(-80:1cm) and +(120:1cm) .. (4,4);
	 \draw[dotted, postaction={decorate,decoration={
	 		markings,
	 		mark=at position .5 with {\arrow{latex reversed}}}}] (5,0) .. controls +(90:0.5cm) and +(-80:1cm) .. (4.8,1.8)
	 .. controls +(100:0.3cm) and +(-60:0.7cm) .. (4,4);
	\end{tikzpicture}
	\caption{The first generators of $\pi_1(X)$.}
	\label{fig:chamanara_generators}
   \end{figure}

For later use, we now fix the notation for the curves that define the fundamental
group of the Chamanara surface.
Let $x$ be the mid point of the square which defines the Chamanara surface. For
$i \in \NN = \{1, 2, \ldots\}$, let~$\alpha_i$ and $\beta_i$, respectively, be the oriented
closed curves with base point $x$ crossing~$a_i$ from left to right,
respectively~$b_i$ from top to bottom, and having no other intersection points
with the boundary of the square (see \cref{fig:chamanara_generators}).

Observe that $\pi_1(X)$ is freely generated by the closed curves $\alpha_i$ and
$\beta_i$ ($i \in \NN$). Let $I = \ZZ\backslash\{0\}$ and  let $F(I)$ be
the free group  over $I$. We define the isomorphism $\rho\colon F(I) \to
\pi_1(X)$~by:
	\begin{equation}\label{isomorphism_rho}
		\begin{split}
			\rho \colon F(I) &\to \pi_1(X) \\
			i &\mapsto \begin{cases}
				\alpha_i, & \text{if } i > 0, \\
				\beta_{-i}, & \text{if } i < 0.
			\end{cases}
		\end{split}
	\end{equation}

Here, and in the whole article, with a slight abuse of notation, we denote the homotopy class of the curve \(\gamma\) simply by $\gamma$.

\section{Descending and lifting of Veech group elements} \label{sec:lifting}

We now consider coverings  $p \colon Y \to X$ of the Chamanara surface $X$. Throughout
the article, covers will always be connected. Recall that the Chamanara surface $X$ by its construction comes with a translation atlas \(\mathcal{U}\). Furthermore, for any topological covering $p \colon Y \to X$, there is a unique translation atlas $\mathcal{V}$ on $Y$, namely $\mathcal{V} = p^*\mathcal{U}$,  such that $p \colon (Y,\mathcal{V}) \to (X,\mathcal{U})$ is a \emph{translation covering}, i.e., locally on charts, it is a translation.

\begin{rem}[Translation structure on covers]
	\label{RemLiftStructure}
	When we consider a covering $p \colon Y \to X$ of the Chamanara surface $X$, we will always consider $Y$ with the translation structure  $\mathcal{V} = p^*\mathcal{U}$.
\end{rem}

In general, given a covering $Y \to X$ of translation surfaces, it is neither true
that $\Gamma(Y) \subseteq \Gamma(X)$ nor that $\Gamma(X) \subseteq \Gamma(Y)$.
In the case that $X$ is the Chamanara surface, however, we do obtain the first relation,
see \cref{prop:everything_descends}.
This makes it possible to describe \(\Gamma(Y)\) as  in \cref{prop:VeechGroupOfCovering}. Here we use the fact that the group \(\Trans(X)\) of translations of \(X\) is trivial
(see~\cref{sec:chamanara}) and thus the derivative map $D\colon \Aff(X) \to \Gamma(X)
\subseteq \SL(2,\R)$ is an isomorphism. We denote the preimage of a
matrix \(A\in\Gamma(X)\) in \(\Aff(X)\)  by \(f_A\).

The proof of  \cref{prop:everything_descends} relies on the crucial property of $X$ that all translations of its universal cover $\widetilde{X}$ are deck transformations as detailed in the next lemma.

\begin{lem}[Translations of the universal cover are deck transformations]
	\label{lem:aut=deck}
	Let $\widetilde{X} \to X$ be the universal cover of the Chamanara surface
	$X$ endowed with the translation surface structure obtained by pull-back via the covering map. Then $\Trans(\widetilde{X})$ equals $\Deck(\widetilde{X}/X)$, where
	$\Deck(\widetilde{X}/X)$ is the group of deck transformations of the
	universal cover.
\end{lem}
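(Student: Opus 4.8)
The inclusion $\Deck(\widetilde{X}/X) \subseteq \Trans(\widetilde{X})$ is the easy one, so I would dispatch it first: since $p\colon \widetilde{X} \to X$ is a translation covering and every deck transformation $\gamma$ satisfies $p\circ\gamma = p$, the map $\gamma$ is a translation in the charts of $\mathcal{V} = p^*\mathcal{U}$, i.e.\ $\gamma \in \Trans(\widetilde{X})$. The content of the lemma is the reverse inclusion, and my plan is to reduce it to a single statement about translations whose translation vector vanishes.

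Every $f \in \Trans(\widetilde{X})$ has a globally well-defined translation vector, and recording it gives a homomorphism $V\colon \Trans(\widetilde{X}) \to \RR^2$ whose restriction to $\Gamma := \Deck(\widetilde{X}/X)$ is the holonomy $\mathrm{hol}\colon \pi_1(X)\to\RR^2$. I claim the reverse inclusion follows from the single assertion that $\ker V \subseteq \Gamma$, i.e.\ that every translation of $\widetilde{X}$ with zero vector is already a deck transformation. Indeed, granting this, for $f \in \Trans(\widetilde{X})$ and $\gamma\in\Gamma$ the conjugate $f\gamma f^{-1}$ is again a translation with $V(f\gamma f^{-1}) = V(\gamma)$, so $f\gamma f^{-1}\gamma^{-1} \in \ker V \subseteq \Gamma$ and hence $f\gamma f^{-1}\in\Gamma$. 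The same applied to $f^{-1}$ shows that $f$ normalises $\Gamma$, so $f$ descends to a homeomorphism $\bar f$ of $X = \widetilde{X}/\Gamma$; as $f$ is a translation and $p$ is a translation covering, $\bar f$ is a translation of $X$. Since $\Trans(X)$ is trivial (\cref{sec:chamanara}), $\bar f = \Id$, which means $p\circ f = p$, i.e.\ $f\in\Gamma$, as desired.

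It remains to prove $\ker V \subseteq \Gamma$, which is the heart of the argument. It is not automatic: were the developing map $D\colon \widetilde{X}\to\RR^2$ injective, then $D\circ h=D$ would force $h=\Id$ immediately, but the holonomy of the Chamanara surface is dense in $\RR^2$, so $D$ is far from injective and $\ker V$ is genuinely large. To control it I would use the unique, infinite-angle singularity of $X$. Translations are flat isometries, hence extend to the metric completion $\overline{\widetilde{X}}$ and permute the set $\tilde\Sigma$ of lifts of the singularity; since $X$ has a single singularity, $\Gamma$ acts transitively on $\tilde\Sigma$. Given $h\in\ker V$, I choose a lift $\tilde\sigma$ and, by transitivity, $\gamma\in\Gamma$ with $\gamma(\tilde\sigma)=h(\tilde\sigma)$, so that $g:=\gamma^{-1}h$ fixes $\tilde\sigma$. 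The developing map extends continuously to $\tilde\sigma$ (saddle connections into the singularity have finite length and develop to segments with a common endpoint), so fixing $\tilde\sigma$ forces $V(g)=0$ and $D\circ g=D$. In the local model of $\widetilde{X}$ at the infinite-angle singularity, a developing-map-preserving homeomorphism fixing the centre is a power $\gamma_0^k$ of the monodromy $\gamma_0\in\Gamma$ around the singularity. Thus $g$ and $\gamma_0^k$ are zero-vector translations agreeing on a punctured neighbourhood of $\tilde\sigma$; since the locus where two zero-vector translations coincide is both open and closed, they agree everywhere, giving $g=\gamma_0^k\in\Gamma$ and hence $h=\gamma g\in\Gamma$.

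The main obstacle is precisely this last local step: identifying a developing-map-preserving homeomorphism that fixes a lift of the singularity with a power of the monodromy around it. This is where the wild, infinite-angle nature of the Chamanara singularity must be pinned down — that $D$ extends to the singularity as a single point and that the local structure is that of the universal cover of a punctured disk — and it is where the failure of injectivity of $D$ is confronted directly. By contrast, the reduction steps (the conjugation identity, transitivity of $\Gamma$ on $\tilde\Sigma$, and the open-and-closed unique-continuation argument) are comparatively routine once the local picture is secured, and the triviality of $\Trans(X)$ — equivalently the uniqueness of the horizontal saddle connection $b_1$ of length $\sfrac{1}{2}$ — enters only at the very last descent step.
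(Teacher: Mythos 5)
Your first inclusion and your reduction of the lemma to the claim $\ker V \subseteq \Deck(\widetilde{X}/X)$ are both correct: the conjugation trick, the descent of a normalising translation to $X$, and the appeal to $\Trans(X)=\{\operatorname{id}\}$ all work. The gap lies exactly in the part you call the heart, and both singularity claims it rests on fail for the Chamanara surface. First, ``since $X$ has a single singularity, $\Gamma$ acts transitively on $\tilde\Sigma$'' is not a valid inference: the deck group acts (simply) transitively on fibres of points \emph{in} $\widetilde{X}$, but $\tilde\Sigma$ consists of ideal points of the metric completion $\overline{\widetilde{X}}$, and the induced action there need not be transitive. An ideal point over $\sigma$ determines, for every $\varepsilon>0$, a connected component of $p^{-1}\bigl(B_\varepsilon(\sigma)\setminus\{\sigma\}\bigr)$, compatibly in $\varepsilon$; these component sets are coset spaces $\pi_1(X)/H_\varepsilon$ with $\bigcap_\varepsilon H_\varepsilon=\{1\}$, so $\tilde\Sigma$ maps to an inverse limit of coset spaces, on which a group generally does \emph{not} act transitively (compare $\ZZ$ acting on $\varprojlim \ZZ/2^n\ZZ$). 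Second, and more fundamentally, the singularity of the Chamanara surface is \emph{wild}, not an infinite-angle cone point: punctured metric neighbourhoods of $\sigma$ have infinite genus (the single end is accumulated by genus, cf.\ \cref{sec:topology}) and infinitely generated fundamental group, so there is no punctured-disk local model, no ``monodromy $\gamma_0$ around the singularity,'' and no sense in which a developing-map-preserving homeomorphism fixing $\tilde\sigma$ must be a power of such a $\gamma_0$. With that local step gone, your unique-continuation argument has nothing to start from, so the key claim $\ker V\subseteq\Gamma$ remains unproved.

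The paper's proof never touches the completion. It uses that $b_1$ is the unique horizontal saddle connection of length $\sfrac{1}{2}$ in $X$ (\cref{sec:chamanara}), so the horizontal saddle connections of length $\sfrac{1}{2}$ in $\widetilde{X}$ are precisely the lifts of $b_1$ and every translation $t$ of $\widetilde{X}$ permutes them. Picking one lift $b$ and the deck transformation $d$ with $d(t(b))=b$, the translation $d\circ t$ fixes $b$ pointwise; an affine map with derivative $\Id$ fixing a segment pointwise is the identity, so $t=d^{-1}\in\Deck(\widetilde{X}/X)$. You could repair your write-up by proving $\ker V\subseteq\Gamma$ this way (an element of $\ker V$ also permutes lifts of $b_1$, and the same composition applies), but note that this argument works verbatim for an \emph{arbitrary} translation, so your $V$-reduction, though correct, then becomes superfluous.
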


\begin{proof}
	As any deck transformation is locally a translation, we have that
	$\Deck(\widetilde{X}/X) \leq \Trans(\widetilde{X})$. To show
	$\Trans(\widetilde{X}) \leq \Deck(\widetilde{X}/X)$, note that the segment
	$b_1$ (cf.~\cref{fig:chamanara})  is the unique horizontal saddle connection
	of length $\sfrac{1}{2}$ on the Chamanara surface. Thus, any translation of
	$\widetilde{X}$ must permute the lifts of~$b_1$. Consider $t \in
	\Trans(\widetilde{X})$ and a lift $b$ of~$b_1$. There is a unique deck
	transformation $d$ which takes~$t(b)$ to~$b$. Since each deck transformation
	is a translation, the derivative of $d \circ t$ must be the identity element~$\Id$ of $\SL_2 (\mathbb{R})$. Moreover, any point on the saddle
	connection $b$ is a fixed point of the affine transformation $d \circ t$.
	The only such transformation is the identity, and so $t=d^{-1}$, and $t$ is
	a deck transformation.
\end{proof}

With \cref{lem:aut=deck} in hand, we prove that every affine
diffeomorphism of a cover of~\(X\) descends to \(X\).

\begin{prop}[Everything descends property]
	\label{prop:everything_descends}
	The Chamanara surface $X$ has the property that for every covering $Y \to
	X$, every element in the affine group of $Y$ descends. In particular,
	$\Gamma(Y) \subseteq \Gamma(X)$.
\end{prop}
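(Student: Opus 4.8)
The plan is to pass to the universal cover $\widetilde{X}$ and use \cref{lem:aut=deck} to descend all the way to $X$. Since $Y \to X$ is a connected covering and $\widetilde{X}$ is simply connected, the universal covering $\pi \colon \widetilde{X} \to X$ factors as $\pi = p \circ q$, where $q \colon \widetilde{X} \to Y$ is the universal covering of $Y$. Given an affine diffeomorphism $f \in \Aff(Y)$, I would first lift it to an affine diffeomorphism $\widetilde{f} \in \Aff(\widetilde{X})$ with $q \circ \widetilde{f} = f \circ q$; this lift exists by the lifting property of covering spaces and is affine with the same derivative as $f$, since $q$ is a translation covering.

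The heart of the argument is to show that $\widetilde{f}$ descends not merely through $q$ but all the way through $\pi$ to a map of $X$. For this it suffices to check that $\widetilde{f}$ normalizes the deck group $\Deck(\widetilde{X}/X)$. This is where the special property of the Chamanara surface enters: a priori, the lift $\widetilde{f}$ only needs to normalize the smaller group $\Deck(\widetilde{X}/Y)$, but I claim it in fact normalizes the larger $\Deck(\widetilde{X}/X)$. Indeed, the affine group always normalizes the translation subgroup: for any $t \in \Trans(\widetilde{X})$, the conjugate $\widetilde{f} \circ t \circ \widetilde{f}^{-1}$ has derivative $D(\widetilde{f}) \cdot \Id \cdot D(\widetilde{f})^{-1} = \Id$ and is therefore again a translation, so $\widetilde{f}$ normalizes $\Trans(\widetilde{X})$. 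By \cref{lem:aut=deck} we have $\Trans(\widetilde{X}) = \Deck(\widetilde{X}/X)$, and the claim follows.

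Consequently $\widetilde{f}$ descends to a diffeomorphism $g \colon X \to X$ with $\pi \circ \widetilde{f} = g \circ \pi$, which is affine with $D(g) = D(\widetilde{f}) = D(f)$ because $\pi$ is a translation covering. It then remains to verify that $g$ is genuinely the descent of $f$ along $p$, i.e.\ that $g \circ p = p \circ f$. This follows from a short diagram chase: using $\pi = p \circ q$ and $q \circ \widetilde{f} = f \circ q$ we get $g \circ p \circ q = \pi \circ \widetilde{f} = p \circ q \circ \widetilde{f} = p \circ f \circ q$, and cancelling the surjection $q$ yields $g \circ p = p \circ f$. For the final inclusion $\Gamma(Y) \subseteq \Gamma(X)$: any $A \in \Gamma(Y)$ equals $D(f)$ for some $f \in \Aff(Y)$, and its descent $g \in \Aff(X)$ satisfies $D(g) = A$, whence $A \in \Gamma(X)$.

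I expect the main obstacle to be conceptual rather than computational: the point is to recognize that what is needed is normalization of the \emph{full} deck group $\Deck(\widetilde{X}/X)$ (and not merely of $\Deck(\widetilde{X}/Y)$, which any lift of a homeomorphism of $Y$ normalizes automatically), together with the observation that \cref{lem:aut=deck} is precisely the input that upgrades the automatic normalization of $\Trans(\widetilde{X})$ into normalization of this full deck group. The remaining work is bookkeeping---checking that lifts and descents are affine with the correct derivatives---which is immediate from the covering maps being translation coverings.
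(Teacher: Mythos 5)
Your proposal is correct and follows essentially the same route as the paper's proof: lift $f \in \Aff(Y)$ to the universal cover, observe that conjugating a deck transformation of $\widetilde{X}/X$ by the lift yields a map with derivative $\Id$ (hence a translation), and invoke \cref{lem:aut=deck} to identify $\Trans(\widetilde{X})$ with $\Deck(\widetilde{X}/X)$, so the lift normalizes the full deck group and descends to $X$. The only difference is that you spell out the final diagram chase ($g \circ p = p \circ f$) that the paper leaves implicit, which is a welcome touch of rigor rather than a deviation.
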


\begin{proof}
	Let $\widetilde{X}$ be the universal cover of the Chamanara surface. First
	note that every diffeo\-morphism of $Y$ lifts to the universal cover.
        Therefore, it suffices to show that all affine
	diffeomorphisms of $\widetilde{X}$ descend to affine diffeomorphisms of $X$.
	Hence, we have to show that for every affine diffeomorphism $f \in
	\Aff(\widetilde{X})$ and for every deck transformation $d \in
	\Deck(\widetilde{X}/X)$, there exists a deck transformation~$d' \in
	\Deck(\widetilde{X}/X)$ such that $d' \circ f = f \circ d$.
	Consider $d' = f \circ d \circ f^{-1}$. The
	derivative of this map equals $D(f)\cdot \Id \cdot D(f)^{-1} = \Id$, where
	$\Id \in \SL(2,\R)$ is the identity matrix. Hence $d' = f \circ d \circ f^{-1}$
	is a translation, i.e., lies in $\Trans(\widetilde{X})$. By
	\cref{lem:aut=deck}, we have $\Trans(\widetilde{X}) =
	\Deck(\widetilde{X}/X)$ which shows the claim.
\end{proof}

\begin{cor}[All Veech group elements of the cover are lifts] \label{prop:VeechGroupOfCovering}
	Let \(p \colon Y \to X\) be a covering of the Chamanara surface \(X\). Then
	\begin{equation*}
		\Gamma(Y) = \{A \in \Gamma(X) \mid f_A \text{ lifts to } Y \text{ via } p\}.
	\end{equation*}
\end{cor}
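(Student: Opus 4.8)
The plan is to prove the asserted set equality by establishing the two inclusions separately. Both rest on the same two facts already available. First, since $\Trans(X)$ is trivial (see \cref{sec:chamanara}), the derivative map $D\colon \Aff(X) \to \Gamma(X)$ in \eqref{eq:short_exact_seq} is an isomorphism, so that for each $A \in \Gamma(X)$ the symbol $f_A$ denotes the \emph{unique} affine diffeomorphism of $X$ with derivative $A$. Second, the everything-descends property of \cref{prop:everything_descends}, which in particular already gives $\Gamma(Y) \subseteq \Gamma(X)$.

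For the inclusion ``$\subseteq$'', I would take $A \in \Gamma(Y)$ and choose $g \in \Aff(Y)$ with $D(g) = A$. By \cref{prop:everything_descends} the map $g$ descends, i.e.\ there is an $f \in \Aff(X)$ with $p \circ g = f \circ p$. As $p$ is a translation covering, it is a local translation and hence leaves derivatives unchanged; reading $p \circ g = f \circ p$ in charts therefore gives $D(f) = D(g) = A$. Since $D$ is injective on $\Aff(X)$, this forces $f = f_A$. Thus $p \circ g = f_A \circ p$, which is precisely the statement that $f_A$ lifts to $Y$ via $p$, and $A$ lies in the right-hand set.

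Conversely, for ``$\supseteq$'', suppose $A \in \Gamma(X)$ is such that $f_A$ lifts, say $g\colon Y \to Y$ satisfies $p \circ g = f_A \circ p$. Reading this equation in charts, $g$ coincides locally with $p^{-1}\circ f_A \circ p$, so it is affine with derivative $A$, the chart maps of $p$ contributing only translations; in particular $g$ is a local diffeomorphism. It then remains only to see that $g$ is a \emph{global} diffeomorphism of $Y$: once this is known, $g \in \Aff(Y)$ with $D(g) = A$, whence $A \in \Gamma(Y)$.

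This last point is the main obstacle. I would resolve it by observing that the relation $p \circ g = f_A \circ p$ exhibits $g$ as a morphism between the two coverings $f_A \circ p$ and $p$ of $X$ --- both are coverings since $f_A$ is a homeomorphism --- so that $g$ is itself a covering map $Y \to Y$, and surjective because $Y$ is connected. Writing $U = p_\ast \pi_1(Y) \le \pi_1(X)$, the relation $p_\ast \circ g_\ast = (f_A)_\ast \circ p_\ast$ identifies the degree of $g$ with the index $[\,U : (f_A)_\ast U\,]$. Since $(f_A)_\ast$ is an automorphism of $\pi_1(X)$ preserving the index of $U$, the inclusion $(f_A)_\ast U \subseteq U$ (which encodes that $f_A$ lifts) must be an equality; hence $g$ has degree one and is a homeomorphism. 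Being locally affine, $g$ is then a diffeomorphism, which completes the argument.
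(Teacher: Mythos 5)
Your ``$\subseteq$'' direction is correct and is exactly the argument the paper intends: by \cref{prop:everything_descends} an affine diffeomorphism $g$ of $Y$ with $D(g)=A$ descends to some $f\in\Aff(X)$, and since $p$ is a local translation, $D(f)=A$, so injectivity of $D$ on $\Aff(X)$ forces $f=f_A$; hence $f_A$ lifts. Note, however, why the paper can dispose of the converse in one line: it reads ``$f_A$ lifts to $Y$ via $p$'' in the strong sense. This is made explicit in the proof of \cref{VGasStabiliser}, where lifting is by definition the existence of a \emph{homeomorphism} $h$ of $Y$ with $p\circ h=f_A\circ p$. Under that reading your ``main obstacle'' evaporates: a homeomorphism lift is locally $p^{-1}\circ f_A\circ p$, hence an affine diffeomorphism with derivative $A$, and $A\in\Gamma(Y)$ follows at once.

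You instead take a lift to be an arbitrary continuous map over $f_A$ and then try to prove it is automatically a homeomorphism, and this is where there is a genuine gap. Your degree argument is: $(f_A)_\star$ is an automorphism of $\pi_1(X)$ preserving the index of $U=p_\star\pi_1(Y)$, hence $(f_A)_\star U\subseteq U$ forces equality. This implication is valid only when $[\pi_1(X):U]$ is \emph{finite}: from $[\pi_1(X):(f_A)_\star U]=[\pi_1(X):U]\cdot[U:(f_A)_\star U]$ one can cancel only a finite common index, whereas if it is infinite the equation says nothing about $[U:(f_A)_\star U]$. The corollary is stated for every covering of the Chamanara surface, including infinite-degree ones (e.g.\ the universal cover or $\ZZ$--covers), so this case cannot be ignored; and the phenomenon is real, since an automorphism can map an infinite-index subgroup properly into itself --- in $F_2=\langle a,b\rangle$, conjugation by $a$ carries $U=\langle a^nba^{-n}\mid n\geq 1\rangle$ properly into $U$. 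There is also a smaller imprecision: for a non-normal cover the lifting criterion and your degree computation involve the subgroup $p_\star\pi_1(Y,g(y_0))$, which is only a \emph{conjugate} of $U$ (after identifying fundamental groups along a path), not $U$ itself. So as written your proof of ``$\supseteq$'' is complete only for finite-degree coverings; for the general statement you should either adopt the paper's definition of lifting, or supply a different argument that a continuous lift has degree one.
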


\begin{proof}
	This follows directly from \cref{prop:everything_descends}.
\end{proof}

To conclude this section, we describe some elements in $\Gamma(X)$ that lift to
any finite normal cover of \(X\) of fixed degree $d$.

\begin{prop}[Powers of parabolic elements lift]
	\label{prop:powers_parabolic_lift} Let $p \colon Y \to X$ be a finite normal
	covering of degree $d$. Then $P_1^d$ and $P_2^d$ are contained in
	$\Gamma(Y)$.
\end{prop}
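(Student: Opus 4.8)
The plan is to invoke \cref{prop:VeechGroupOfCovering}: since every element of $\Gamma(Y)$ is a lift, it suffices to show that the affine homeomorphisms $f_{P_1^d}$ and $f_{P_2^d}$ lift along $p$. I will carry out the argument for $P_1$ using the slope-$1$ cylinder decomposition of \cref{fig:cylinder_decomposition_chamanara_slope}; the case of $P_2$ is identical with the slope-$2$ decomposition. Recall that $f_{P_1}$ is the Dehn multitwist of this decomposition: it acts on each cylinder $C_i$ (of circumference $w_i$) as some integer number $m_i$ of Dehn twists and is the identity on the union $S$ of the saddle connections bounding the cylinders. Hence $f_{P_1^d}$ acts on $C_i$ as $d\,m_i$ Dehn twists, i.e.\ as the shear displacing the top boundary of $C_i$ relative to the bottom by $d\,m_i\,w_i$, and it is the identity on $S$.

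The key step is to understand how the cylinders lift. First I would restrict the cover to a single cylinder, so that $p^{-1}(C_i)\to C_i$ is a degree-$d$ covering of $C_i$. Because $p$ is \emph{normal}, its deck group acts transitively on the connected components of $p^{-1}(C_i)$ (any two components over a common point are exchanged by a deck transformation), so each component $\widetilde C_{i,j}$ is a cylinder covering $C_i$ with one and the same degree $k_i$. Counting points in a fibre gives $(\#\,\text{components})\cdot k_i=d$, whence $k_i\mid d$. In particular the circumference of each $\widetilde C_{i,j}$ is $k_i w_i$.

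Now I would lift $f_{P_1^d}$ cylinder by cylinder. On $\widetilde C_{i,j}$, the lifted shear again displaces top relative to bottom by the same length $d\,m_i\,w_i$, since $p$ is a translation covering and preserves derivatives. As $k_i\mid d$, we have $d\,m_i\,w_i=(d\,m_i/k_i)\cdot(k_i w_i)$, an \emph{integer} multiple of the circumference $k_i w_i$ of $\widetilde C_{i,j}$; thus this lift is exactly $d\,m_i/k_i$ Dehn twists of $\widetilde C_{i,j}$, fixing $\partial\widetilde C_{i,j}$ pointwise. Defining $\widetilde f$ to be this map on each component cylinder and the identity on $p^{-1}(S)$, the pieces agree on their common boundaries (all equal to the identity there), so $\widetilde f$ is a well-defined homeomorphism of $Y$ with $p\circ\widetilde f=f_{P_1^d}\circ p$. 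As the lift of an affine map under a translation covering, $\widetilde f$ is automatically affine with derivative $P_1^d$, so $P_1^d\in\Gamma(Y)$, and the same reasoning gives $P_2^d\in\Gamma(Y)$.

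The main obstacle—and really the heart of the matter—is the integrality $d\,m_i\,w_i\in(k_i w_i)\ZZ$: a single twist downstairs does \emph{not} lift to a component cylinder covering with degree $k_i>1$, so one genuinely needs the $d$-th power together with the divisibility $k_i\mid d$. This is precisely where normality is used: for a non-normal cover the components of $p^{-1}(C_i)$ could cover with degrees not dividing $d$, and the construction would break down.
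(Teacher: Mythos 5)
Your proposal is correct and takes essentially the same route as the paper's proof: both use the slope-$1$ and slope-$2$ cylinder decompositions, observe that by normality every component of the preimage of a cylinder covers it with one common degree dividing $d$, and conclude that $P_1^d$ (resp.\ $P_2^d$) lifts as boundary-fixing (powers of) Dehn twists on the lifted cylinders, assembling to a well-defined affine diffeomorphism of $Y$. Your write-up merely makes explicit the divisibility bookkeeping and the gluing along $p^{-1}(S)$ that the paper leaves implicit.
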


\begin{proof}
	Recall from \cref{sec:chamanara} that $P_1$ and $P_2$ act as Dehn multitwists on $X$ about the core curves of the cylinders of slope $1$
	and of slope $2$, respectively. Let $C$ be a cylinder of slope $1$
	or $2$ in~$X$. The preimage of $C$ is a set of cylinders with the same
	height as $C$ and the circumference an integer multiple of the
	circumference of $C$. As the covering is normal, this multiple is a
	divisor of $d$. Hence, $P_1^d$ or $P_2^d$ acts as (possibly a power of) a Dehn twist on any cylinder in the preimage of~$C$ which fixes the boundary pointwise. This gives a well-defined diffeomorphism of the whole surface~$X$.
\end{proof}

For the third generator, $-\Id$, we will see in  \cref{rem:Id_lifts}\ that $-\Id$ lifts for every abelian cover.
Therefore, every generator has a power that lifts to $Y$.

\section{Describing coverings through monodromy}\label{sec:monodromy}

In this section, we introduce our main character: the space $\Cov_G$ of
isomorphism classes of normal coverings $\p\colon Y \to X$ of the Chamanara surface
$X$ with deck group $G$. Later in this section,
 \(G\) will be a finite abelian group. We begin by giving a general description of~$\Cov_G$ in \cref{DescriptionBG} which results from classical covering theory, similar to the one given in~\cite[Section 4.1]{hooper_trevino_19}.
 Subsequently, in  \cref{Determine_h},  we explain how to compute for abelian $G$ a vector $h$ that captures all the information about a normal covering \(p\) through the monodromy map of~\(\p\). We refer to~\cite{hatcher_02} as a standard reference for covering theory.

For the definition of \(\Cov_G\), recall that in this article, covers are by definition connected. This is one aspect in which our space $\Cov_G$ differs from the space of covers in \cite{hooper_trevino_19}; the other is that we consider only normal covers.

Recall that two coverings $\p_1\colon Y_1 \to X$ and $\p_2\colon Y_2 \to X$ are
\emph{isomorphic} if there is a homeo\-morphism $h\colon Y_1 \to Y_2$ such that $\p_2
\circ h = \p_1$. We then write~$\p_1 \sim \p_2$ and denote by $[\p]$ the
equivalence class of $\p$. With this notation, $\Cov_G$ is defined as~follows.

\begin{definition}[The space \boldmath{$\Cov_G$}]
	For a group $G$, let
	\[
		\Cov_G = \{\p\colon Y \to X \mid \p \text{ is a normal covering with deck group } G\}/\! \sim.
	\]
\end{definition}

\begin{prop}[Correspondence between normal coverings and monodromy
	vectors]\label{DescriptionBG} Let \(G\) be a group. There is a one-to-one correspondence between
	\(\Cov_G\) and $\TheSequencesQuot_G = \TheSequences_G/\!\sim$, where
	\begin{equation}\label{eq:TheSequence}
	  \TheSequences_G =  \{(h_i) \in G^{\ZZ} \mid \left\langle h_i | i \in \ZZ \right\rangle = G \text{ and }
	h_0 = 0 \}
	\end{equation}
	and \((h_i) \sim (h'_i)\) if there exists an automorphism \(q \in \Aut(G)\)
	such that \(h'_i = q(h_i)\)  for all \(i \in \ZZ\).
\end{prop}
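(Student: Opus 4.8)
The plan is to realize the claimed correspondence as a composition of three standard bijections: first from $\Cov_G$ to the set of normal subgroups of $\pi_1(X)$ with quotient isomorphic to $G$, then to the set of surjections $\pi_1(X)\to G$ modulo $\Aut(G)$, and finally, via the isomorphism $\rho$ of \eqref{isomorphism_rho}, to $\TheSequencesQuot_G$. None of the individual steps is deep; the work is in lining them up so that covering isomorphism and the automorphism equivalence $\sim$ match.

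First I would invoke the classical dictionary of covering theory (see \cite{hatcher_02}): isomorphism classes of connected coverings of $X$ correspond to conjugacy classes of subgroups of $\pi_1(X)$, a covering $p\colon Y\to X$ being sent to the conjugacy class of $p_*\pi_1(Y)$. A connected covering is normal exactly when this subgroup $N$ is normal, in which case its conjugacy class is the singleton $\{N\}$ and the deck group is $\pi_1(X)/N$. Hence (unbased) isomorphism classes of connected normal coverings are in bijection with normal subgroups $N\trianglelefteq\pi_1(X)$, and the requirement $\Deck(Y/X)\cong G$ built into $\Cov_G$ translates into the condition $\pi_1(X)/N\cong G$. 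This is the first bijection.

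Next I would encode such a subgroup by a surjection. A normal subgroup $N$ with $\pi_1(X)/N\cong G$ is precisely the kernel of a surjective homomorphism $\varphi\colon\pi_1(X)\to G$, and two surjections have the same kernel if and only if they differ by post-composition with an element of $\Aut(G)$, since the identification $\pi_1(X)/N\xrightarrow{\sim}G$ is unique up to such an automorphism. Thus normal subgroups with quotient isomorphic to $G$ correspond bijectively to surjections $\pi_1(X)\to G$ modulo the $\Aut(G)$-action by post-composition; this is the step in which the automorphism ambiguity of the statement appears, reflecting the non-canonical identification of the deck group with $G$. Choosing a lift of the base point $x$ identifies $\varphi$ with the monodromy representation of the cover, which accounts for the name \emph{monodromy vector}.

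Finally I would transport everything to $\TheSequences_G$ through $\rho\colon F(I)\xrightarrow{\sim}\pi_1(X)$ with $I=\ZZ\setminus\{0\}$. Precomposition with $\rho$ identifies surjections $\pi_1(X)\to G$ with surjections $F(I)\to G$, and by the universal property of the free group these are exactly the functions $h\colon I\to G$, $i\mapsto\varphi(\rho(i))$, surjectivity being equivalent to $\langle h_i\mid i\in I\rangle=G$. Recording such a function as a family $(h_i)_{i\neq 0}$ and adjoining the normalization $h_0=0$ identifies it with an element of $\TheSequences_G$, the generation conditions agreeing since $h_0=0$ contributes nothing. Under these identifications the $\Aut(G)$-action becomes $h_i\mapsto q(h_i)$, which is precisely the relation $\sim$, so passing to orbits yields the bijection $\Cov_G\leftrightarrow\TheSequencesQuot_G$. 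The one genuinely delicate point, which I would state carefully rather than compute, is the bookkeeping in the first two steps: that for normal covers unbased covering isomorphism corresponds to equality of kernels (not merely conjugacy), and that the only residual freedom is exactly the $\Aut(G)$ used to define $\sim$.
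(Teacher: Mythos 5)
Your proposal is correct and takes essentially the same route as the paper's own proof, which likewise factors the correspondence as a composition $\Theta_3\circ\Theta_2\circ\Theta_1$ through the set $\NNN_G$ of normal subgroups $N\trianglelefteq\pi_1(X)$ with $\pi_1(X)/N\cong G$, then the set $\HHH_G$ of surjections $\pi_1(X)\to G$ modulo post-composition with $\Aut(G)$, and finally transports to $\TheSequencesQuot_G$ via the isomorphism $\rho$. The two delicate points you single out (that for normal covers unbased isomorphism gives a well-defined subgroup rather than a conjugacy class, and that the only residual ambiguity is the $\Aut(G)$-action defining $\sim$) are exactly the ones the paper's proof addresses.
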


\begin{rem}[Disconnected covers]
	We will see in the proof of \cref{DescriptionBG} that 
	we have a correspondence between the set of connected and disconnected normal covers with deck group~$G$ 
	up to isomorphism and $G^{\ZZ}/\!\sim$. The condition that $\left\langle h_i | i \in \ZZ \right\rangle = G$ is equivalent to the fact that for a corresponding covering $p:Y \to X$, the surface~$Y$ is connected.
	In particular, the vector~$h = \underline{0}$ is always excluded from $\TheSequences_G$.
\end{rem}

	\begin{proof}[Proof of \cref{DescriptionBG}]
		The desired one-to-one correspondence \(\Theta\colon \Cov_G \to \TheSequencesQuot_G \) naturally arises as a composition \(\Theta = \Theta_3 \circ
		\Theta_2 \circ \Theta_1\) of bijections \(\Theta_1,\Theta_2, \Theta_3\) which we describe in the following.
		
		To define \(\Theta_1\), recall that any covering $\p\colon Y \to X$ induces an
embedding $\p_\star \colon \pi_1(Y) \into \pi_1(X)$.
The embedding depends only up to conjugation on the chosen base points of the fundamental groups and for isomorphic coverings, we obtain conjugated subgroups of \(\pi_1(X)\).
Furthermore, the image of $\pi_1(Y)$ in~$\pi_1(X)$
is normal if and only if the covering $\p$ is normal.  Hence, for normal coverings, the
image of the embedding of $\pi_1(Y)$ in $\pi_1(X)$ is independent of the choice
of base points and of the representative in the isomorphism class.
Finally, we have that $\Deck(\p) \cong \pi_1(X)/\pi_1(Y)$ for the deck group $\Deck(\p)$
of the covering $\p \colon Y \to X$.
This gives the~map
\[
	\Theta_1\colon \Cov_G \;\to\; \NNN_G = \{N \trianglelefteq \pi_1(X) \mid \pi_1(X)/N \; \cong \; G \},\quad
	  [\p\colon Y \to X] \; \mapsto\; \p_\star(\pi_1(Y)),
\]
which is a one-to-one correspondence by standard covering theory.

To define \(\Theta_2\), observe that any normal subgroup $N \in
\NNN_G$ defines a projection $\kappa_N : \pi_1(X) \to G$ such that its kernel is
$N$. For any other projection $\kappa'_N\colon \pi_1(X) \to G$ with the same kernel~$N$, there is an automorphism $\alpha \in \Aut(G)$ with $\alpha \circ \kappa_N =
\kappa'_N$. Hence we have a one-to-one correspondence
\[
	\Theta_2\colon \NNN_G \to \HHH_G = \{\kappa:\pi_1(X) \to G \mid \kappa \text{ is surjective}\}/\!\sim, \quad
	 N \mapsto [\kappa_N].
\]
Here, $\sim$ is the equivalence relation $\kappa \sim \kappa' \iff
\exists \alpha \in \Aut(G) \mbox{ with } \kappa' = \alpha \circ \kappa$ and
$[\kappa]$ is the equivalence class of $\kappa$.

Finally, to define \(\Theta_3\), we assign to each element in
$\HHH_G$ a bi-infinite sequence as follows. Recall from \eqref{isomorphism_rho}
the bijection \(\rho\) between the fundamental group~$\pi_1(X)$
with chosen generators~\(\alpha_i\) and~\(\beta_i\)~(\(i \in
\NN\)) and the free group $F(I)$ in the infinite countable set $I =
\ZZ\backslash\{0\}$. For any surjective homomorphism $\kappa\colon \pi_1(X) \to G$, we define  $h$ in $G^{\ZZ}$ as follows:
  \begin{equation}\label{Sequence}
    h_i = \begin{cases}
		\kappa(\alpha_i), &\text{ if } i > 0,\\
		0 &\text{ if } i = 0,\\
		\kappa(\beta_{-i}) &\text{ if } i < 0.
	\end{cases}
  \end{equation}
  Here, $0 = 0_G$ is the identity element in the group $G$. Observe that \(h\) lies in \(\TheSequences_G\), as defined in \cref{eq:TheSequence},
  and we obtain a well-defined map
\[
\Theta_3\colon \HHH_G \to\TheSequencesQuot_G =  \TheSequences_G/\!\sim, \quad 
      [\kappa]  \mapsto [h],
\]
where \(\sim\) is the equivalence relation defined in the statement of \cref{DescriptionBG} and
\([\cdot]\) denotes the equivalence class. By definition of \(\HHH_G\) and
\(\TheSequencesQuot_G\), \(\Theta_3\) is a bijection.

Composing all three isomorphisms from above, we obtain the isomorphism $ \Theta
\coloneqq \Theta_3\circ\Theta_2\circ\Theta_1$:
\begin{equation*}
	\Theta\colon \Cov_G \xrightarrow{\Theta_1} \NNN_G \xrightarrow{\Theta_2} \HHH_G \xrightarrow{\Theta_3}  \TheSequencesQuot_G \;
	, \quad
	[\p] \mapsto [h] = \Theta([\p])
	. \qedhere
\end{equation*}
\end{proof}

\begin{definition}[Defining vector and defining group homomorphism]\label{DefiningVector}
If \(h \in \TheSequences_G\) is
a representative  of \(\Theta([p])\) then we call \(h\) a \emph{defining vector of~\(p\)}.
Moreover, we call any \(\kappa\colon \pi_1(X) \to G\) with
\(\Theta_2\circ\Theta_1([\p]) = [\kappa]\) a \emph{defining group homomorphism
for $\p$}.
\end{definition}

We now describe how a representative \(h\) of \(\Theta([p])\) can be computed from the monodromy map of
\(p\) in case the covering~\(p\) is abelian, i.e., it is normal and the deck group $G$ is abelian.
Let us first briefly recall the notion of a \emph{monodromy map}. Let $x$ be a
point in $X$ and let  $F_x$ be its fibre in $Y$.  For any closed path~$\gamma$ on~$X$ with
starting point $x$ and for any~$y \in F_x$, denote by $y \cdot \gamma$ the end point
of the unique lift of~$\gamma$ with starting point $y$. Then, the \emph{monodromy
map} is the map $m_{\p}\colon \pi_1(X,x) \to \Sym(F_x)$ which assigns to each $\gamma \in
\pi_1(X,x)$ the permutation $m_{\p}(\gamma)\colon y \mapsto y \cdot \gamma$.  The map $m_{\p}$
is an anti-homomorphism since~$\pi_1(X,x)$ acts via~$m_{\p}$ from the right.
Since the covering $\p$ is normal
with deck group~$G = \Deck(\p)$, for any $y,
y' \in F_x$, there exists a unique~$g \in G$ such that~$g\cdot y = y'$. If we fix~\(y_0 \in F_x\), we obtain a group homomorphism~\(\kappa_p\)~with
\begin{equation}\label{kappap}
	\begin{split}
		\kappa_p\colon \pi_1(X,x) &\to G\\
		 \gamma &\mapsto g,
	\end{split}
\end{equation}
where $g$ is the unique element in $G$ with $g\cdot y_0 = y_0\cdot \gamma$. By
definition, the kernel of $\kappa_p$ is $\pi_1(Y)$, therefore~\(\kappa_p\) is a
defining group homomorphism for the covering $\p$, i.e., we have
$\Theta_2\circ\Theta_1([\p]) = [\kappa_{\p}]$. Moreover, by definition of
$\kappa_{\p}$, we have the identity
\begin{equation}\label{monodromy}
	\kappa_{\p}(\gamma)\cdot y_0 = y_0\cdot \gamma = m_{\p}(\gamma)(y_0).
\end{equation}
For non-abelian groups, this identity only holds for $y_0$. However, since we assume that $G$ is abelian, we may use the standard fact of covering theory phrased in the following lemma to extend it to the whole fibre $F_x$.

\begin{lem}[Monodromy action and action by deck elements coincide for abelian coverings]
  \label{MonodromyAbelian}
	Let $\p \colon Y \to X$ be an abelian covering with deck group~$G$,
	\(x \in X\), $\kappa_{\p}$ be the group homomorphism defined
	by~\eqref{kappap}, and~$m_{\p}$ the monodromy map. Then, for every
	$\gamma \in\pi_1(X,x)$ and every $y\in F_x$, we have that
	\[
		\kappa_{\p}(\gamma)\cdot y = y\cdot \gamma = m_{\p}(\gamma)(y).
	\]

\begin{proof}
	Let $g = \kappa_{\p}(\gamma)$, hence $g \cdot y_0 = y_0\cdot \gamma$. Since $\p$ is
	normal, for any $y \in F_x$, there is some~$g' \in G$ such that $y = g' \cdot
	y_0$. Recall that  $y_0\cdot \gamma$ is the end point of the lift $\hat{\gamma}$ of $\gamma$ which
	starts at $y_0$. Then~$g'\cdot \hat{\gamma}$ is the lift of $\gamma$ starting  at $y = g' \cdot
	y_0$  and ending in $g' \cdot (y_0\cdot \gamma)$. Altogether, we have
	\begin{equation*}
		m_{\p}(\gamma)(y) = y \cdot \gamma = g' \cdot y_0 \cdot \gamma = g' \cdot g \cdot y_0 = g \cdot g' \cdot y_0 = g \cdot y = \kappa_{\p}(\gamma) \cdot y
		. \qedhere
	\end{equation*}
\end{proof}
\end{lem}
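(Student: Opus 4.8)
The plan is to bootstrap from the single identity \eqref{monodromy}, which we already know holds at the chosen base point $y_0 \in F_x$, namely $\kappa_{\p}(\gamma)\cdot y_0 = y_0 \cdot \gamma$, to the entire fibre. First I would fix an arbitrary $y \in F_x$ and, using that the covering is normal with deck group $G = \Deck(\p)$, write $y = g' \cdot y_0$ for a unique $g' \in G$. The whole content of the lemma is then to transport the base-point identity along the deck transformation $g'$.

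The key step is to understand how lifts of the loop $\gamma$ behave under a deck transformation. Since every $g' \in G$ commutes with the covering map $\p$, it sends lifts of $\gamma$ to lifts of $\gamma$: concretely, if $\hat\gamma$ denotes the lift of $\gamma$ starting at $y_0$, so that its endpoint is $y_0 \cdot \gamma$, then $g' \cdot \hat\gamma$ is the lift of $\gamma$ starting at $g' \cdot y_0 = y$, and hence its endpoint is $g' \cdot (y_0 \cdot \gamma)$. This gives the clean rule $y \cdot \gamma = g' \cdot (y_0 \cdot \gamma)$, which re-expresses the right monodromy action in terms of the left deck action evaluated at the base point.

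Substituting the base-point identity then yields $y \cdot \gamma = g' \cdot (\kappa_{\p}(\gamma)\cdot y_0) = g' \cdot g \cdot y_0$, where I abbreviate $g = \kappa_{\p}(\gamma)$. It remains to recognize $g' \cdot g \cdot y_0$ as $\kappa_{\p}(\gamma)\cdot y = g \cdot y = g \cdot g' \cdot y_0$. The hard part, and really the only obstacle, is that $g$ and $g'$ need not commute in general, so the two expressions $g' g$ and $g g'$ need not agree; this is exactly the place where the hypothesis that $G$ is abelian enters, letting me swap them freely. For non-abelian $G$ the argument genuinely breaks here, consistent with the remark preceding the lemma that \eqref{monodromy} is guaranteed only at $y_0$. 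Once commutativity is invoked, the chain of equalities closes and the lemma follows.
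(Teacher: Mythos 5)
Your proposal is correct and follows essentially the same approach as the paper's proof: writing $y = g' \cdot y_0$ via normality, observing that the deck transformation $g'$ carries the lift $\hat\gamma$ at $y_0$ to the lift at $y$ with endpoint $g' \cdot (y_0 \cdot \gamma)$, and then invoking commutativity of $G$ to swap $g'$ and $g = \kappa_{\p}(\gamma)$. You also correctly identify the abelian hypothesis as the precise point where the argument would fail for general deck groups, matching the paper's remark.
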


In the next lemma, we describe how $[h]$ can be determined from $p$.

\begin{lem}[Determining the defining vector for an abelian covering]\label{Determine_h}
  Let $\p \colon Y \to X$ be an abelian covering with deck group $G$, let $x$ be the
  mid point of the square forming the Chamanara surface as in
  \cref{fig:chamanara}, and~$y$ an arbitrary preimage of $x$ on $Y$.  Define $h
  \in  G^{\ZZ}$ as:
		\[
			h_i\cdot y =
			\begin{cases}
				y\cdot \alpha_i, & \text{if } i > 0,\\
				0, & \text{if } i = 0,\\
				y\cdot \beta_{-i}, & \text{if } i < 0.
			\end{cases}
		\]
		Then, $\Theta([\p]) = [h]$.
\begin{proof}
	We choose $\kappa_{\p}$ as defined in \cref{kappap}. As described after \cref{kappap}, we then have $\Theta_2
	\circ \Theta_1([\p]) = [\kappa_{\p}]$. Moreover, by \cref{Sequence} and \cref{MonodromyAbelian}, we
	have that $[h] = \Theta_3([\kappa_{\p}])$. This shows the claim.
\end{proof}
\end{lem}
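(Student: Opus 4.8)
The plan is to recognize the vector $h$ defined in the statement as a representative of the image $\Theta([\p]) = \Theta_3 \circ \Theta_2 \circ \Theta_1([\p])$, by routing everything through the canonical homomorphism $\kappa_{\p}$ of \cref{kappap}. The text preceding the lemma already establishes that $\kappa_{\p}$ is a defining group homomorphism for $\p$, so that $\Theta_2 \circ \Theta_1([\p]) = [\kappa_{\p}]$. Hence the whole task reduces to verifying $\Theta_3([\kappa_{\p}]) = [h]$, for which it suffices to show that $h$ agrees entry-by-entry with the vector that $\Theta_3$ assigns to $\kappa_{\p}$ via \cref{Sequence}.

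By \cref{Sequence}, that vector has entries $\kappa_{\p}(\alpha_i)$ for $i > 0$, the value $0$ for $i = 0$, and $\kappa_{\p}(\beta_{-i})$ for $i < 0$. I would compare these with the $h_i$ of the statement, which are defined only implicitly, through the monodromy action $h_i \cdot y = y \cdot \alpha_i$ on an \emph{arbitrary} preimage $y$ of $x$ (and analogously for $\beta_{-i}$). The bridge between the two descriptions is \cref{MonodromyAbelian}: since $\p$ is abelian, we have $\kappa_{\p}(\gamma) \cdot y = y \cdot \gamma$ for every $y \in F_x$, not merely for the fixed base preimage $y_0$ used to define $\kappa_{\p}$.

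Setting $\gamma = \alpha_i$ then gives $\kappa_{\p}(\alpha_i) \cdot y = y \cdot \alpha_i = h_i \cdot y$, and because the deck group $G$ of a normal covering acts freely on the fibre $F_x$, the equality $\kappa_{\p}(\alpha_i) \cdot y = h_i \cdot y$ forces $\kappa_{\p}(\alpha_i) = h_i$. The same computation with $\gamma = \beta_{-i}$ settles the case $i < 0$, and $i = 0$ is immediate since both vectors have $0$ in position $0$. Thus the two vectors coincide on the nose, so $[h] = \Theta_3([\kappa_{\p}]) = \Theta([\p])$, as claimed.

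The only genuine subtlety — and the step I would flag as the crux — is that the statement permits an arbitrary preimage $y$, whereas $\kappa_{\p}$ is tied to one fixed choice $y_0$. Reconciling these is exactly the content of \cref{MonodromyAbelian}, and it is precisely here that the abelianness of $G$ is indispensable: for non-abelian $G$ the identity $\kappa_{\p}(\gamma)\cdot y = y \cdot \gamma$ holds only at $y_0$, so the $h$ defined in the statement could depend on the choice of $y$ and the identification with $\Theta([\p])$ would break down.
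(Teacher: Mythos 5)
Your proposal is correct and follows essentially the same route as the paper: both reduce the claim to the identities $\Theta_2 \circ \Theta_1([\p]) = [\kappa_{\p}]$ (established right after \cref{kappap}) and $[h] = \Theta_3([\kappa_{\p}])$, with \cref{MonodromyAbelian} supplying the bridge between the fixed base point $y_0$ and the arbitrary preimage $y$. Your write-up merely makes explicit two steps the paper leaves implicit — the free action of the deck group forcing $h_i = \kappa_{\p}(\alpha_i)$, and the role of abelianness in allowing an arbitrary $y$ — which is a faithful elaboration rather than a different argument.
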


\begin{rem}[Notation for coverings]
  In the rest of this article, we identify $\Cov_G$ with $\TheSequencesQuot_G$ via~$\Theta$ and denote an element~$[\p]$ of $\Cov_G$ as the class $[h] = \Theta([\p])\in \TheSequencesQuot_G$ of a bi-infinite vector $h$ with coefficients in $G$.
  Furthermore, we denote the covering surface of $p$ by $Y_h$  and consider it always with the unique translation structure such that $p \colon Y_h \to X$ becomes a translation covering (cf.~\cref{RemLiftStructure}). We denote the Veech group of $Y_h$ by $\Gamma(Y_h)$.
\end{rem}

\section{Action of the Veech group on \texorpdfstring{$\Cov_G$}{Cov G}} \label{sec:action}

The affine group $\Aff(X)$ acts on the space $\Cov_G$
of equivalence classes of normal coverings of the Chamanara surface $X$ with deck group
$G$ via $[\p] \mapsto f\cdot [\p] = [f\circ \p]$.  As before, we identify the
affine group $\Aff(X)$ with the Veech group $\Gamma(X)$ via the derivative map
$D\colon \Aff(X) \to \Gamma(X)$ and obtain an action of~$\Gamma(X)$ on the
space~$\Cov_G$. As recorded in \cref{VGasStabiliser}, it follows from standard covering theory that  for a covering
\(p\colon Y \to X\), the Veech group \(\Gamma(Y)\) is the stabilizer of this
action. In this section, we explicitly compute the action
of $\Gamma(X)$ on $\Cov_G$ in the case that the group $G$ is~abelian.

\begin{lem}[Veech group as stabilizer of the action on $\Cov_G$]\label{VGasStabiliser}
	Let \(G\) be a group and  \(p \colon Y \to X\) a  covering of \(X\)
	with deck group~\(G\). Then the Veech group \(\Gamma(Y)\) is the stabilizer
	of the equivalence class \([p] \in\Cov_G\), that is,
	\[
		\Gamma(Y) = \{A \in \Gamma(X)|\; A \cdot [p] = [p]\}.
	\]
\end{lem}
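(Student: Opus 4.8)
The plan is to reduce both sides of the claimed equality to the single assertion that the affine diffeomorphism $f_A$ admits a lift $Y \to Y$ along $p$, and then to quote the already-established \cref{prop:VeechGroupOfCovering}. First I would unwind what it means for $A$ to lie in the stabilizer. By the definition of the $\Gamma(X)$-action, $A \cdot [p] = [f_A \circ p]$, so $A$ stabilizes $[p]$ exactly when the covering $f_A \circ p \colon Y \to X$ is isomorphic to $p \colon Y \to X$. Unpacking the definition of covering isomorphism recalled just before \cref{DescriptionBG} (with $p_1 = f_A\circ p$ and $p_2 = p$), this says there is a homeomorphism $h \colon Y \to Y$ with $p \circ h = f_A \circ p$. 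In other words, $A$ lies in the stabilizer if and only if $f_A$ admits a topological lift through $p$.

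Next I would compare this with the description $\Gamma(Y) = \{A \in \Gamma(X) \mid f_A \text{ lifts to } Y \text{ via } p\}$ from \cref{prop:VeechGroupOfCovering}. The two formulations differ only in the meaning of ``lift'': the stabilizer condition provides merely a homeomorphism $h$, whereas membership in $\Gamma(Y)$ asks for an element of $\Aff(Y)$ with derivative $A$. To close this gap I would invoke the fact (\cref{RemLiftStructure}) that $Y$ carries the pullback translation structure $\mathcal{V} = p^{*}\mathcal{U}$, so that $p$ is locally a translation in charts. On a sufficiently small chart $U$, the relation $p \circ h = f_A \circ p$ rewrites as $h|_U = (p|_{h(U)})^{-1} \circ f_A \circ p|_U$; since the outer maps have derivative $\Id$ in the pulled-back charts and $f_A$ is affine with derivative $A$, the composition $h$ is itself affine with derivative $A$. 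Hence any topological lift automatically lies in $\Aff(Y)$ with $D(h) = A$, the two conditions coincide, and combining this equivalence with \cref{prop:VeechGroupOfCovering} gives $\Gamma(Y) = \{A \in \Gamma(X) \mid A \cdot [p] = [p]\}$.

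The step I expect to require the most care is precisely this upgrade from a homeomorphism lift to an affine lift. Given the heavy lifting already done in \cref{prop:everything_descends} and \cref{prop:VeechGroupOfCovering}, the rest of the argument is a formal manipulation of the definitions of the action and of covering isomorphism; the only genuinely geometric input is that the translation atlas on $Y$ is pulled back from $X$, which is what forces a topological lift to respect the affine structure. This is the point at which the purely covering-theoretic notion of ``stabilizer of $[p]$'' is made to agree with the differential-geometric notion of Veech group, so I would state it explicitly even though the verification is short.
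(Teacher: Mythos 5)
Your proof is correct and follows essentially the same route as the paper's: both reduce the statement to \cref{prop:VeechGroupOfCovering} and then unwind the definitions of the $\Gamma(X)$--action and of covering isomorphism to see that $A\cdot[p]=[p]$ says exactly that $f_A$ lifts through $p$ as a homeomorphism. The one step you elaborate --- that such a topological lift is automatically affine with derivative $A$ because the translation structure on $Y$ is the pullback $p^{*}\mathcal{U}$ --- is glossed over in the paper (it is implicitly absorbed into its reading of ``lifts'' in \cref{prop:VeechGroupOfCovering}), and making it explicit is a harmless and welcome clarification rather than a deviation.
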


\begin{proof}
    For \(A \in \Gamma(X)\), we have the following chain of equivalences: By
    \cref{prop:VeechGroupOfCovering}, \( A\) lies in~\(\Gamma(Y)\) if and only
    if the corresponding affine homeomorphism \(f_A\) of $X$ lifts via \(p\). This
    means, by definition, that there exists a homeomorphism \(h\) of $Y$ such that \(p
    \circ h = f_A \circ p\). But this is equivalent to the fact that~\([p] =
    [f_A \circ p]\). Since, again by definition, \(A\cdot [p] = [f_A \circ p]
    \), we are~done.
\end{proof}

Assume from now on that $G$ is abelian.
We compute the action of the generators \(P_1\),\(P_2\), \(-\Id\), and of the hyperbolic element \(H\) introduced in \cref{sec:chamanara}.

We always use the convention that a sum from $j=1$ to $0$ is empty, hence equal to $0$.

\begin{prop}[Action of $P_1$, $P_2$, $-\Id$, and $H$]\label{ComputeTheGenerators}
	Let \(p \colon Y \to X\) be a finite abelian covering of \(X\) with deck group \(G\).
	Let \(h\) be
	a defining vector of~\(\p\) and \([h]\) its equivalence class.
	The generators \(P_1\), \(P_2\) and \(-\Id\), and the element \(H\) act on~$[h]$ as follows:
	
	\begin{align*}
		(P_1\cdot h)_k &= \begin{cases}
			h_{-k} + 2\sum_{j = 1}^{k-1} (-h_j + h_{-j}), & \text{if } k \geq 1,\\
			h_{-k} + 2\sum_{j = 1}^{-k} (-h_j + h_{-j}), & \text{if } k \leq -1,
		\end{cases}\\
		(P_2\cdot h)_k &= \begin{cases}
			h_{-1} + h_{-k-1} + 2h_{-k} + 2\sum_{j = 1}^{k-1} (-h_j + h_{-j}), & \text{if } k \geq 1,\\
			h_{-1}, & \text{if } k = -1,\\
			h_{-1} + h_{-k-1} + 2h_{k} + 2\sum_{j = 1}^{-k-1} (-h_j + h_{-j}), & \text{if } k \leq -2,
		\end{cases}\\
		(-\Id\cdot h)_k &= -h_k,\\
		(H\cdot h)_k &= \begin{cases}
			h_{k-1} + h_{-1}, & \text{if } k \neq 1,\\
			-h_{-1}, & \text{if } k = 1,
		\end{cases}
	\end{align*}
where \(P_1\cdot h\),  \(P_2\cdot h\),\(-\Id\cdot h\), and \(H\cdot h\)
denote representatives of the classes \(P_1\cdot [h]\),  \(P_2\cdot
[h]\), \(-\Id\cdot [h]\), and~\(H\cdot [h]\).
\end{prop}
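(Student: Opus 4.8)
The plan is to turn the geometric action $[p]\mapsto[f_A\circ p]$ into a combinatorial one on defining vectors by tracking how the induced automorphism of $\pi_1(X)$ moves the generating loops, and then to abelianize. Concretely, for a defining homomorphism $\kappa$ of $p$ (as in \cref{DescriptionBG}), functoriality gives $(f_A\circ p)_\star(\pi_1(Y)) = (f_A)_\star\big(p_\star(\pi_1(Y))\big) = (f_A)_\star(\ker\kappa)$, so a defining homomorphism of $f_A\circ p$ is $\kappa\circ(f_A)_\star^{-1}$, up to the usual $\Aut(G)$--ambiguity. Since $G$ is abelian, $\kappa$ factors through $H_1(X;\ZZ)$, which is free abelian on the classes $[\alpha_i],[\beta_i]$ (cf.~\cref{isomorphism_rho}), with $\kappa[\alpha_i]=h_i$ and $\kappa[\beta_j]=h_{-j}$ by \cref{Determine_h}. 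Writing $e_k=\alpha_k$ for $k>0$ and $e_k=\beta_{-k}$ for $k<0$, so that $\kappa[e_k]=h_k$, the entire computation reduces to determining the homological automorphism $(f_A)_*^{-1}$ on the $[e_k]$ and substituting. I first record the direction carefully: the appearance of $(f_A)_\star^{-1}$ rather than $(f_A)_\star$ is exactly what fixes the sign of the shear and the direction of the index shift below.

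Two of the four cases are quickly disposed of. The element $-\Id$ is the central symmetry (rotation by $\pi$) about the centre $x$; it is an involution of $X$ that preserves the families $\{\alpha_i\}$ and $\{\beta_i\}$ while reversing their orientations, so one checks it acts as $-1$ on $H_1(X;\ZZ)$, giving $(-\Id\cdot h)_k=-h_k$. The element $H=\mathrm{diag}(2,\tfrac12)$ is the baker's-map self-similarity of the Chamanara surface: it doubles widths and halves heights, sending the vertical segment $a_n$ of length $2^{-n}$ to one of length $2^{-(n+1)}$ and the horizontal segment $b_n$ to one of length $2^{-(n-1)}$. Geometrically this realizes a shift of the segment, and hence of the loop, indexing; on homology it produces the shift $[e_k]\mapsto[e_{k-1}]$ recorded by $(H\cdot h)_k=h_{k-1}+h_{-1}$, with the exceptional value $k=1$ (equivalently the wrap-around of $b_1$, the distinguished shortest horizontal saddle connection from \cref{sec:chamanara}) accounting for the term $-h_{-1}$. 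I expect the uniform $+h_{-1}$ correction to come from the basepoint and half-twist bookkeeping of the self-similarity, to be verified on the explicit picture of \cref{fig:chamanara_generators}.

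The substance is in the parabolics $P_1$ and $P_2$. Each is a Dehn multitwist along the core curves of a cylinder decomposition (slope $1$ for $P_1$, slope $2$ for $P_2$; see \cref{fig:cylinder_decomposition_chamanara_slope}), so by the Picard--Lefschetz formula its homological action on a class $[\gamma]$ is $[\gamma]+\sum_{c} n_c\,\langle[\gamma],[c]\rangle\,[c]$, the sum running over the core curves $c$ with multiplicity $n_c$ the number of twists the parabolic performs on the corresponding cylinder. The remaining work is purely combinatorial: express each core-curve class $[c]$ in the basis $[\alpha_i],[\beta_i]$, compute the algebraic intersection numbers $\langle[\alpha_i],[c]\rangle$ and $\langle[\beta_i],[c]\rangle$ from the explicit decompositions, and assemble the contributions. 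For slope $1$ all cylinders have equal modulus, so $n_c=1$ throughout and the alternating partial sums telescope into $2\sum_{j=1}^{k-1}(-h_j+h_{-j})$; for slope $2$ the middle cylinder has three times the modulus of the others and hence receives a different twist multiplicity, and this asymmetry is precisely what produces the isolated case $k=-1$ and the extra terms $h_{-1}+h_{-k-1}+2h_{-k}$ in the formula for $P_2$.

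The main obstacle is this last bookkeeping: getting every algebraic intersection number and every orientation right across the infinitely many cylinders so that the alternating partial sums collapse to the stated closed forms, and correctly pinning down the exceptional indices ($k=1$ for $H$, and the boundary effects at $k=-1$ for $P_2$) where the self-similarity or the anomalous middle cylinder breaks the generic pattern. As an independent consistency check I will verify that the four resulting formulas satisfy the relation $H=-P_1P_2$ recorded in \cref{sec:chamanara}; matching both sides as operators on $\TheSequences_G$ simultaneously validates the signs, the twist multiplicities, and the treatment of the exceptional indices.
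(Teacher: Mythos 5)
Your plan follows the paper's proof essentially step for step: both replace the geometric action by precomposition of the defining homomorphism with $(f_\star)^{-1}$ (the paper's \cref{ActOnKappa}), factor through $H_1(X,\ZZ)$ since $G$ is abelian, compute $P_1$ and $P_2$ via the Dehn-multitwist formula on homology with explicit core-curve classes and intersection numbers (including the triple twist on the anomalous middle slope-$2$ cylinder), let $-\Id$ act as $-1$, and obtain $H$ by reading the geometric self-similarity off the square picture rather than from $H=-P_1P_2$. The one piece of bookkeeping you defer is resolved in the paper by observing that $P_1,P_2$ are products of \emph{left} Dehn twists, so the right-twist formula is applied to $P_1^{-1},P_2^{-1}$ --- which is exactly the inverse your setup calls for anyway --- and your proposed consistency check against $H=-P_1P_2$ would catch any residual sign error.
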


To prove \cref{ComputeTheGenerators}, we need more preparation. We first describe how $\Aff(X)$ acts on the defining group
homomorphism $\kappa_{\p}\colon \pi_1(X)\to G$ associated to \(p\). This
inconspicuous fact will later be essential for the computations.

 \begin{lem}[Computation of action of $\Aff(X)$ on defining group homomorphism]\label{ActOnKappa}
   	We choose $x_0 \in X$ and a preimage $y_0 \in \p^{-1}(x_0)$ as base points
	of $X$ and $Y$. For $f \in \Aff(X)$, let $f_\star\colon \pi_1(X,x_0) \to
	\pi_1(X,f(x_0))$ be the induced isomorphism of the fundamental group. Then,
	$\kappa_\p \circ {f_\star}^{-1}$ is a defining group homomorphism for~$f
	\circ \p$.
 \end{lem}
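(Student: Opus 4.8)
The plan is to prove something slightly sharper than the statement: with the natural base points, the homomorphism $\kappa_\p \circ {f_\star}^{-1}$ is literally the homomorphism that the recipe \eqref{kappap} associates to the covering $f \circ \p$. Since it was observed right after \eqref{kappap} that any homomorphism built this way from a covering is a defining group homomorphism for that covering, this immediately yields the claim. Concretely, I would take $f(x_0) \in X$ as the base point of $X$ for the covering $f \circ \p$ and keep $y_0 \in Y$ as the base point of the total space; this is consistent since $(f \circ \p)(y_0) = f(x_0)$. Writing $\kappa_{f \circ \p} \colon \pi_1(X, f(x_0)) \to G$ for the homomorphism that \eqref{kappap} produces from $f \circ \p$ with these base points, the goal becomes the identity $\kappa_{f \circ \p} = \kappa_\p \circ {f_\star}^{-1}$.

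The heart of the argument is a comparison of lifts. Fix a loop $\delta \in \pi_1(X, f(x_0))$ and let $\widehat{\delta}$ be a path in $Y$. Because $f$ is a homeomorphism of $X$, we have $(f \circ \p)(\widehat{\delta}) = \delta$ if and only if $\p(\widehat{\delta}) = f^{-1} \circ \delta$; that is, $\widehat{\delta}$ is a lift of $\delta$ through $f \circ \p$ exactly when it is a lift of $f^{-1} \circ \delta$ through $\p$. Note also that the fibre of $f \circ \p$ over $f(x_0)$ is $\p^{-1}(x_0)$, so both monodromy actions live on the same fibre. Passing to homotopy classes, $f^{-1} \circ \delta$ represents ${f_\star}^{-1}(\delta)$, whence $y_0 \cdot_{f \circ \p} \delta = y_0 \cdot_\p {f_\star}^{-1}(\delta)$, where $\cdot_\p$ and $\cdot_{f \circ \p}$ denote the monodromy of the two coverings. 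Feeding this into the defining relation $\kappa(\gamma) \cdot y_0 = y_0 \cdot \gamma$ from \eqref{kappap} gives $\kappa_{f \circ \p}(\delta) \cdot y_0 = y_0 \cdot_{f \circ \p} \delta = y_0 \cdot_\p {f_\star}^{-1}(\delta) = \kappa_\p({f_\star}^{-1}(\delta)) \cdot y_0$, and uniqueness of the deck element moving $y_0$ to a given point of the fibre forces $\kappa_{f \circ \p}(\delta) = \kappa_\p({f_\star}^{-1}(\delta))$. As $\delta$ is arbitrary, the desired identity holds and the lemma follows.

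For bookkeeping one should also note that $\kappa_\p \circ {f_\star}^{-1}$ is a surjective homomorphism, which is automatic since $\kappa_\p$ is surjective and $f_\star$ is an isomorphism. As a consistency check, and as an alternative route avoiding monodromy, one can instead compute kernels: $\ker(\kappa_\p \circ {f_\star}^{-1}) = f_\star(\ker \kappa_\p) = f_\star(\p_\star(\pi_1(Y))) = (f \circ \p)_\star(\pi_1(Y))$, which is exactly $\Theta_1([f \circ \p])$, so that $\kappa_\p \circ {f_\star}^{-1}$ represents $\Theta_2 \circ \Theta_1([f \circ \p])$ by the construction of $\Theta_2$. I expect the only genuine obstacle to be careful base-point and orientation bookkeeping: one must track that the monodromy is a right action (an anti-homomorphism), verify that it is precisely $f$ being a global homeomorphism that lets lifts through $f \circ \p$ be reinterpreted as lifts through $\p$ of the $f^{-1}$-image loop, and keep the base points $x_0$, $f(x_0)$, and $y_0$ straight throughout.
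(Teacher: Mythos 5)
Your proposal is correct and takes essentially the same approach as the paper: both arguments identify the lift of a loop through $f \circ \p$ starting at $y_0$ with the lift through $\p$ of the $f^{-1}$-image loop, and then invoke the defining relation \eqref{kappap}/\eqref{monodromy} together with the (simply transitive) deck action on the fibre to conclude $\kappa_{f\circ\p} = \kappa_\p \circ f_\star^{-1}$. Your kernel computation $\ker(\kappa_\p \circ f_\star^{-1}) = f_\star(\p_\star(\pi_1(Y))) = (f\circ\p)_\star(\pi_1(Y))$ is a valid, arguably cleaner alternative that the paper does not use, though as you present it, it serves only as a consistency check.
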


\begin{proof}
	Observe that $\Deck(f \circ \p) = \Deck(\p) = G$. We will show that
	$\kappa_\p \circ {f_\star}^{-1}$ satisfies~\eqref{monodromy} for the
	covering~$f \circ \p$. 
	
	Let $\gamma \in \pi_1(X,x_0)$ and $\gamma' = f_\star (\gamma) \in \pi_1(X,f(x_0))$.
	From the definition of $\kappa_{f \circ p}$ in \eqref{kappap}, we have that $\kappa_{f
		\circ \p}(\gamma')(y_0) = y_0 \cdot \gamma'$ which is the end point of a lift of  $\gamma'$ via~$f \circ \p$ starting at $y_0$.
	However, this lift is equal to the lift of~$\gamma = f_\star^{-1}(\gamma')$ via~$\p$ starting at $y_0$. Applying \eqref{kappap} to $\kappa_p$ gives that its end point is equal to
	$\kappa_{\p}({f_\star}^{-1}(\gamma'))(y_0 )$. Since~\eqref{monodromy} determines
	the equivalence class of the defining group homomorphism $\kappa_{f \circ
	\p}$ uniquely, we obtain the claim.
\end{proof}

Since the group $G$ is abelian, any map $\kappa\colon \pi_1(X) \to G$ factors
over the projection $q\colon \pi_1(X) \to H_1(X,\ZZ) \cong
\pi_1^{\text{ab}}(X)$. This
means that there exists some $\overline{\kappa}\colon H_1(X,\ZZ) \to G$ such
that the following diagram commutes:
\[
 \begin{xy}
 \xymatrix{
    \pi_1(X) \ar[d]^q \ar[dr]^{\kappa}    &    \\
     H_1(X,\ZZ) \ar[r]^{\overline{\kappa}}             &   G
 }
 \end{xy}
\]
For $f \in \Aff(X)$, we denote by $\overline{f_\star}\colon H_1(X,\ZZ) \to
H_1(X,\ZZ)$ the automorphism on homology induced by $f$. It follows that
$\kappa_{f \circ \p}$, which is equal to $\kappa_{\p} \circ {f_\star}^{-1}$
by \cref{ActOnKappa}, descends to $\overline{\kappa_{f \circ \p}} =
\overline{\kappa_\p} \circ \overline{f_\star}^{-1}$.\\

To simplify notation,  we denote for any $\gamma \in \pi_1(X)$ its image $q(\gamma)$ in $H_1(X,\ZZ)$ also just by
$\gamma$ from now on. Moreover, for~$f \in \Aff(X)$, we denote the
isomorphism induced by $f$ on $H_1(X,\ZZ)$ also by $f_\star$.
With this notation, we summarize the previous discussion in the following remark.

\begin{rem}[Computation of action of $\Aff(X)$ on defining vector]\label{tildeh}
We obtain a defining vector~$\tilde{h}$ of $[f\circ \p]$ as follows:
\begin{equation}\label{ActOnMondromy}
  \tilde{h} = \begin{cases}
		\kappa_{f \circ \p}(\alpha_i) = \overline{\kappa_{f \circ \p}}(\alpha_i) = \overline{\kappa_{\p}}({f_\star}^{-1}(\alpha_i)), & \text{if } i > 0,\\
		0, & \text{if } i = 0,\\
		\kappa_{f \circ \p}(\beta_{-i}) = \overline{\kappa_{f \circ \p}}(\beta_{-i}) = \overline{\kappa_{\p}}({f_\star}^{-1}(\beta_{-i})), & \text{if } i < 0.
  \end{cases}
\end{equation}
\end{rem}

A crucial ingredient in the proof of \cref{ComputeTheGenerators} is that \(P_1\) and \(P_2\) act as Dehn multitwists.
Recall that for a simple \emph{right} Dehn twist $f$ about a curve~$c$ and for any $\gamma \in H_1(X,\ZZ)$, we have that
\begin{equation}\label{ActionOfDehnTwist}
    f_{\star}(\gamma) = \gamma + \intersection(c,\gamma)\cdot c,
\end{equation}
where $\intersection(c_1, c_2)$ is the algebraic intersection number of two simple
closed curves $c_1$ and $c_2$. If~$f$ is a Dehn multitwist, we can decompose it as
product of Dehn twists about simple curves and then use the additivity. It is a subtle technicality that \(P_1\) and \(P_2\) are both products of
\textit{left} Dehn twists. Hence we can apply \eqref{ActionOfDehnTwist} to their
inverses.

For the calculation, we need the intersection numbers for the generating cycles
\(\alpha_i\) and \(\beta_i\) which we read off from
\cref{fig:chamanara_generators} on \cpageref{fig:chamanara_generators}, as
follows for $j,k \in \NN$:
\begin{equation}\label{isn}
    i(\alpha_j,\alpha_k) = -1, \text{ if } j < k,
    \quad i(\alpha_j, \beta_k) = -1 , \text{ for all } j,k,
    \quad \text{ and } i(\beta_j,\beta_k) = 1 \text{ if } j < k.
\end{equation}

We are now ready to prove \cref{ComputeTheGenerators}.

\begin{proof}[Proof of \cref{ComputeTheGenerators}]
	Let $\p \colon Y \to X$ be an abelian covering with deck group $G$ and defining vector~\mbox{$h \in \TheSequences_G$}. In the following, we compute a defining vector $\tilde{h}$ of $[f\circ \p]$ for $f \in	\{P_1,P_2,-\Id,H\}$, using \cref{tildeh}.

	$\bullet$ \textbf{The generator $P_1$:}

	Recall from \cref{sec:chamanara} that $P_1$ comes from the cylinder
	decomposition of slope $1$. As all cylinders have the same modulus, $P_1$
	acts as product of the simple Dehn twists about the core curves of the
	cylinders of slope $1$.
	We read off from \cref{fig:cylinder_decomposition_chamanara_slope} that the
	mid curve $c_j$ of the~$j$--th cylinder in direction~$\left(
	\begin{smallmatrix} 1 \\ 1 \end{smallmatrix}\right)$ is $c_j = \alpha_j - \beta_j$. 
	From this, we obtain, using~\eqref{isn}:
	\[
		i(c_j,\alpha_k) = i(\alpha_j,\alpha_k) - i(\beta_j,\alpha_k) = \begin{cases}
			 -1 - 1 = -2, & \text{if } 1 \leq j < k,\\
			 0 - 1 = -1, & \text{if } j = k,\\
			1 - 1 = 0, & \text{if } j > k,
		\end{cases}
	\]
	and
	\[
		i(c_j, \beta_k) = i(\alpha_j,\beta_k) - i(\beta_j,\beta_k) = \begin{cases}
			-1 - 1 = -2, & \text{if } 1 \leq j < k,\\
			-1 + 0 = -1, & \text{if } j = k,\\
			-1 + 1 = 0, & \text{if } j > k.
		\end{cases}
	\]
	Applying \eqref{ActionOfDehnTwist}, we obtain for the right Dehn multitwist
	$P_1^{-1}$ about the core curves of the cylinders in direction~$\left(
	\begin{smallmatrix} 1 \\ 1 \end{smallmatrix}\right)$ that
	\[
	  (P_1^{-1})_{\star}(\gamma) = \gamma + \sum_{j \geq 1}i(c_j,\gamma)\cdot c_j.
	\]
	This gives
	\begin{equation*}
		(P_1^{-1})_{\star}(\alpha_k)
		= \alpha_k + \sum_{j \geq 1}i(c_j,\alpha_k)\cdot c_j
		=  \alpha_k - (\alpha_k - \beta_k) - 2\sum_{j = 1}^{k-1} (\alpha_j - \beta_j)
		= \beta_k + 2\sum_{j = 1}^{k-1} (-\alpha_j+\beta_j),
	\end{equation*}
	and
	\begin{align*}
			(P_1^{-1})_{\star}(\beta_k)
			& = \beta_k + \sum_{j \geq 1}i(c_j,\beta_k)\cdot c_j
			=  \beta_k - (\alpha_k - \beta_k) - 2\sum_{j = 1}^{k-1} (\alpha_j - \beta_j)\\
			& =   -\alpha_k + 2\beta_k +2\sum_{j = 1}^{k-1} (-\alpha_j+\beta_j).
	\end{align*}
	Using the definition in~\eqref{Sequence}, we obtain from~\eqref{ActOnMondromy} for $k\geq 1$
	\begin{equation*}
		(P_1\cdot h)_k = \overline{\kappa}\left( (P_1^{-1})_{\star}(\alpha_k) \right)
			= \overline{\kappa} \left( \beta_k + 2\sum_{j = 1}^{k-1} (-\alpha_j+\beta_j) \right)
			= h_{-k} + 2\sum_{j = 1}^{k-1} (-h_j+h_{-j}),
	\end{equation*}
	and for $k\leq -1$
	\begin{equation*}
        (P_1\cdot h)_k =  \overline{\kappa} \left( (P_1^{-1})_{\star}(\beta_{-k}) \right)
			= -h_{-k} + 2h_{k} + 2\sum_{j=1}^{-k-1} (-h_j + h_{-j})
			= h_{-k} + 2\sum_{j = 1}^{-k} (-h_j + h_{-j}).
	\end{equation*}

	$\bullet$ \textbf{The generator $P_2$:}

	Recall from \cref{sec:chamanara} that $P_2$ comes from the cylinder decomposition of slope $2$ where the modulus of the middle cylinder is three times the modulus of all the other cylinders. Geometrically, this implies that $P_2$ is the product of the triple
	left Dehn twist about the core curve of the middle cylinder and the simple
	left Dehn twists about the core curves of the other cylinders of slope~$2$.
	The core curves $d_j$ of the $j$--th cylinder in direction
	$\left(\begin{smallmatrix} 1 \\
	2 \end{smallmatrix}\right)$ are given as
	\begin{equation*}
		d_1 = -\beta_1, \quad d_j = \alpha_{j-1} - \beta_j \text{ for } j \geq 2
	\end{equation*}
	For the intersection numbers, we obtain from \cref{isn} similarly to the previous computation
	\[
		i(d_j,\alpha_k) = \begin{cases}
			-1, & \text{if } j = 1,\\
			-1 - 1 = -2, & \text{if } 1 < j < k + 1,\\
			0 - 1 = -1, & \text{if } j = k+1,\\
			1 - 1 = 0, & \text{if } j > k+1,
		\end{cases}
	\]
	and
	\[
		i(d_1,\beta_k) = \begin{cases}
			0, & \text{if } k = 1,\\
			-1, & \text{if } k > 1,
		\end{cases}
		\qquad
		i(d_j,\beta_k) = \begin{cases}
			-1 - 1 = -2, & \text{if } 2 \leq j < k,\\
			-1 + 0 = -1, & \text{if } 2 \leq j = k,\\
			-1 + 1 = 0, & \text{if } j > k.
		\end{cases}
	\]
	Taking into account that $P_2^{-1}$ is a triple right Dehn twist about $d_1$, we obtain
	\[
		\begin{split}
			(P_2^{-1})_{\star}(\alpha_k) &= \alpha_k + 3 \cdot i(d_1, \alpha_k) \cdot d_1 + \sum_{j \geq 2}i(d_j,\alpha_k)\cdot d_j \\
        		&= \alpha_k + 3d_1 + 2\sum_{j = 2}^{k} d_j - d_{k+1}\\
        		&= \alpha_k - 3 \cdot (-\beta_1) - 2\sum_{j = 2}^{k} (\alpha_{j-1}-\beta_j) - (\alpha_k - \beta_{k+1})\\
        		&= -2\sum_{j = 1}^{k-1} \alpha_{j} + 2\sum_{j = 1}^{k} \beta_j + \beta_1 + \beta_{k+1}.
		\end{split}
	\]
	Moreover, we have
	\[
		(P_2^{-1})_{\star}(\beta_k) = \beta_k + 3 \cdot i(d_1, \beta_k) \cdot d_1 + \sum_{j \geq 2}i(d_j,\beta_k)\cdot d_j,
	\]
	which is equal to $\beta_1$ if $k = 1$ and to
	\[
		\beta_k + 3\beta_1  -2\sum_{j = 2}^{k-1} (\alpha_{j-1} -\beta_j) - (\alpha_{k-1} - \beta_k)
                        = -2\sum_{j = 1}^{k-2} \alpha_{j} - \alpha_{k-1} + \beta_1 + 2\sum_{j = 1}^{k} \beta_{j}
	\]
	if $k > 1$.
	Using the definition in \eqref{Sequence}, we obtain from~\eqref{ActOnMondromy} that $(P_2\cdot h)_{-1} =  \overline{\kappa}((P_2^{-1})_{\star}(\beta_1) ) =
	h_{-1}$, while, if $k\ge 1$:
	\[
		\begin{split}
			(P_2\cdot h)_k &= \overline{\kappa}((P_2^{-1})_{\star}(\alpha_k) )\\
			&= \overline{\kappa} \Biggl( -2\sum_{j = 1}^{k-1} \alpha_{j} + 2\sum_{j = 1}^{k} \beta_j + \beta_1 + \beta_{k+1} \Biggr) \\
        	&=  2\sum_{j = 1}^{k-1} (-h_j+h_{-j}) + h_{-1} + 2h_{-k} + h_{-(k+1)},
		\end{split}
	\]
	and if $k\le -2$,
	\[
	\begin{split}
			(P_2\cdot h)_k &=  \overline{\kappa}((P_2^{-1})_{\star}(\beta_{-k}) )\\
			&= -2\sum_{j=1}^{-k-2} h_j - h_{-k-1} + h_{-1} + 2\sum_{j=1}^{-k}h_{-j}\\
			&= 2\sum_{j=1}^{-k-1} (-h_j + h_{-j}) + h_{-k-1} + h_{-1} + 2 h_{k}.
	\end{split}
	\]

	$\bullet$ \textbf{The generator $-\Id$:}

	We easily see that
	\[
	    (-\Id^{-1})_{\star}(\alpha_k) = -\alpha_k
	    \qquad \text{and} \qquad
        (-\Id^{-1})_{\star}(\beta_k) = -\beta_k
        .
	\]
	Hence we obtain for the action:
	\[
        (-\Id\cdot h)_k = -h_k.
	\]

	$\bullet$ \textbf{The hyperbolic element $H$:}

	\begin{figure}[btph]
		\centering
		\begin{tikzpicture}[x=1cm,y=1cm, scale=0.45]

		\draw (-3,4) node {$\begin{pmatrix}
		\frac12 & 0 \\
		0 & 2
		\end{pmatrix} \cdot$};

		\draw (0,0) -- node[below] {$b_1$}
			(4,0) -- node[below] {$b_2$}
			(6,0) -- node[below] {$b_3$}
			(7,0) -- node[below] {$b_4$}
			(7.5,0) -- (7.75,0);
		\draw[densely dotted] (7.75,0) -- (8,0);

		\fill (0,0) circle (2pt);
		\fill (8,8) circle (2pt);

		\fill (4,0) circle (2pt);
		\fill (6,0) circle (2pt);
		\fill (7,0) circle (2pt);
		\fill (7.5,0) circle (2pt);
		\fill (7.75,0) circle (2pt);

		\draw (8,8) -- node[above] {$b_1$}
			(4,8) -- node[above] {$b_2$}
			(2,8) -- node[above] {$b_3$}
			(1,8) -- node[above] {$b_4$}
			(0.5,8) -- (0.25,8);
		\draw[densely dotted] (0.25,8) -- (0,8);

		\fill (4,8) circle (2pt);
		\fill (2,8) circle (2pt);
		\fill (1,8) circle (2pt);
		\fill (0.5,8) circle (2pt);
		\fill (0.25,8) circle (2pt);

		\draw (0,0) -- node[left] {$a_1$}
			(0,4) -- node[left] {$a_2$}
			(0,6) -- node[left] {$a_3$}
			(0,7) -- node[left] {$a_4$}
			(0,7.5) -- (0,7.75);
		\draw[densely dotted] (0,7.75) -- (0,8);

		\fill (0,4) circle (2pt);
		\fill (0,6) circle (2pt);
		\fill (0,7) circle (2pt);
		\fill (0,7.5) circle (2pt);
		\fill (0,7.75) circle (2pt);

		\draw (8,8) -- node[right] {$a_1$}
			(8,4) -- node[right] {$a_2$}
			(8,2) -- node[right] {$a_3$}
			(8,1) -- node[right] {$a_4$}
			(8,0.5) -- (8,0.25);
		\draw[densely dotted] (8,0.25) -- (8,0);

		\fill (8,4) circle (2pt);
		\fill (8,2) circle (2pt);
		\fill (8,1) circle (2pt);
		\fill (8,0.5) circle (2pt);
		\fill (8,0.25) circle (2pt);

		\draw (10,4) node{$=$};

		\begin{scope}[xshift=12cm]
		\begin{scope}[xscale=0.5, yscale=2]
		\draw (0,0) -- node[below] {$b_1$}
			(4,0) -- node[below] {$b_2$}
			(6,0) -- node[below] {$b_3$}
			(7,0) -- 
			(7.5,0) -- (7.75,0);
		\draw[densely dotted] (7.75,0) -- (8,0);

		\draw (8,8) -- node[above] {$b_1$}
			(4,8) -- node[above] {$b_2$}
			(2,8) -- node[above] {$b_3$}
			(1,8) -- 
			(0.5,8) -- (0.25,8);
		\draw[densely dotted] (0.25,8) -- (0,8);

		\draw (0,0) -- node[left] {$a_1$}
			(0,4) -- node[left] {$a_2$}
			(0,6) -- node[left] {$a_3$}
			(0,7) -- node[left] {$a_4$}
			(0,7.5) -- (0,7.75);
		\draw[densely dotted] (0,7.75) -- (0,8);

		\draw (8,8) -- node[right] {$a_1$}
			(8,4) -- node[right] {$a_2$}
			(8,2) -- node[right] {$a_3$}
			(8,1) -- node[right] {$a_4$}
			(8,0.5) -- (8,0.25);
		\draw[densely dotted] (8,0.25) -- (8,0);
		\end{scope}
		
		\fill (0,0) circle (2pt);
		\fill (4,16) circle (2pt);
		
		\fill (2,0) circle (2pt);
		\fill (3,0) circle (2pt);
		\fill (3.5,0) circle (2pt);
		\fill (3.75,0) circle (2pt);
		\fill (3.875,0) circle (2pt);
		
		\fill (2,16) circle (2pt);
		\fill (1,16) circle (2pt);
		\fill (0.5,16) circle (2pt);
		\fill (0.25,16) circle (2pt);
		\fill (0.125,16) circle (2pt);
		
		\fill (0,8) circle (2pt);
		\fill (0,12) circle (2pt);
		\fill (0,14) circle (2pt);
		\fill (0,15) circle (2pt);
		\fill (0,15.5) circle (2pt);
		
		\fill (4,8) circle (2pt);
		\fill (4,4) circle (2pt);
		\fill (4,2) circle (2pt);
		\fill (4,1) circle (2pt);
		\fill (4,0.5) circle (2pt);
		\end{scope}

		\draw (18,4) node {$\leadsto$};

		\begin{scope}[xshift=20cm]
		\draw[dashed] (0,0) -- (4,0);
		\draw (4,0) -- node[below] {$b_1$}
			(6,0) -- node[below] {$b_2$}
			(7,0) -- node[below] {$b_3$}
			(7.5,0) -- (7.75,0);
		\draw[densely dotted] (7.75,0) -- (8,0);

		\fill (0,0) circle (2pt);
		\fill (8,8) circle (2pt);

		\fill (4,0) circle (2pt);
		\fill (6,0) circle (2pt);
		\fill (7,0) circle (2pt);
		\fill (7.5,0) circle (2pt);
		\fill (7.75,0) circle (2pt);

		\draw[dashed] (8,8) -- (4,8);
		\draw (4,8) -- node[above] {$b_1$}
			(2,8) -- node[above] {$b_2$}
			(1,8) -- node[above] {$b_3$}
			(0.5,8) -- (0.25,8);
		\draw[densely dotted] (0.25,8) -- (0,8);

		\fill (4,8) circle (2pt);
		\fill (2,8) circle (2pt);
		\fill (1,8) circle (2pt);
		\fill (0.5,8) circle (2pt);
		\fill (0.25,8) circle (2pt);

		\draw (0,0) -- node[left] {$a_2$}
			(0,4) -- node[left] {$a_3$}
			(0,6) -- node[left] {$a_4$}
			(0,7) -- 
			(0,7.5) -- (0,7.75);
		\draw[densely dotted] (0,7.75) -- (0,8);

		\fill (0,4) circle (2pt);
		\fill (0,6) circle (2pt);
		\fill (0,7) circle (2pt);
		\fill (0,7.5) circle (2pt);
		\fill (0,7.75) circle (2pt);

		\draw (8,8) -- node[right] {$a_2$}
			(8,4) -- node[right] {$a_3$}
			(8,2) -- node[right] {$a_4$}
			(8,1) -- 
			(8,0.5) -- (8,0.25);
		\draw[densely dotted] (8,0.25) -- (8,0);

		\fill (8,4) circle (2pt);
		\fill (8,2) circle (2pt);
		\fill (8,1) circle (2pt);
		\fill (8,0.5) circle (2pt);
		\fill (8,0.25) circle (2pt);

		\draw[ultra thin] (4,0) -- node{$a_1$} (4,8);
		\end{scope}
		\end{tikzpicture}
		\caption{The action of $H^{-1}$ on the Chamanara surface.}
		\label{fig:action_H-invers}
	\end{figure}

	Finally, we determine the action of the hyperbolic element \(H\). Observe
	that we could obtain this from concatenating the action of $-\Id$, $P_1$,
	and $P_2$. However, it is convenient to compute~it directly as the action of $H^{-1} =
	\begin{pmatrix}
	\frac{1}{2} & 0 \\
	0 & 2 \end{pmatrix}$
	on the Chamanara surface can be understood geometrically very explicitly (see \cref{fig:action_H-invers}).
	Hence, we can read the action of $H^{-1}$ on the curves $\alpha_k$ and $\beta_k$ directly from the description as a
	square with edges identified (see \cref{fig:action_H-invers_on_generators} for some examples):
	\[
		(H^{-1})_{\star}(\alpha_k) = \begin{cases}
			- \beta_1, & \text{if } k = 1,\\
			\alpha_{k-1} + \beta_1, & \text{if } k > 1,
		\end{cases}
	\]
	and $ (H^{-1})_{\star}(\beta_k) = \beta_{k+1} + \beta_1$. Hence, we obtain
	\[
		(H\cdot h)_k = \begin{cases}
			h_{k-1} + h_{-1}, & \text{if } k \neq 1, \\
			- h_{-1}, & \text{if } k = 1.
		\end{cases}\qedhere
	\]
    
    	\begin{figure}
    	\centering
    	\begin{tikzpicture}[x=1cm,y=1cm, scale=0.45,
    		mid arrow/.style={
    			postaction={decorate,decoration={
    					markings,
    					mark=at position .66 with {\arrow{latex}}
    			}}
    		}]
    		
    		\draw[dashed] (0,0) -- (4,0);
    		\draw (4,0) -- node[below] {$b_1$}
    		(6,0) -- node[below] {$b_2$}
    		(7,0) -- node[below] {$b_3$}
    		(7.5,0) -- (7.75,0);
    		\draw[densely dotted] (7.75,0) -- (8,0);
    		
    		\fill (0,0) circle (2pt);
    		\fill (8,8) circle (2pt);
    		
    		\fill (4,0) circle (2pt);
    		\fill (6,0) circle (2pt);
    		\fill (7,0) circle (2pt);
    		\fill (7.5,0) circle (2pt);
    		\fill (7.75,0) circle (2pt);
    		
    		\draw[dashed] (8,8) -- (4,8);
    		\draw (4,8) -- node[above] {$b_1$}
    		(2,8) -- node[above] {$b_2$}
    		(1,8) -- node[above] {$b_3$}
    		(0.5,8) -- (0.25,8);
    		\draw[densely dotted] (0.25,8) -- (0,8);
    		
    		\fill (4,8) circle (2pt);
    		\fill (2,8) circle (2pt);
    		\fill (1,8) circle (2pt);
    		\fill (0.5,8) circle (2pt);
    		\fill (0.25,8) circle (2pt);
    		
    		\draw (0,0) -- node[left] {$a_2$}
    		(0,4) -- node[left] {$a_3$}
    		(0,6) -- node[left] {$a_4$}
    		(0,7) -- 
    		(0,7.5) -- (0,7.75);
    		\draw[densely dotted] (0,7.75) -- (0,8);
    		
    		\fill (0,4) circle (2pt);
    		\fill (0,6) circle (2pt);
    		\fill (0,7) circle (2pt);
    		\fill (0,7.5) circle (2pt);
    		\fill (0,7.75) circle (2pt);
    		
    		\draw (8,8) -- node[right] {$a_2$}
    		(8,4) -- node[right] {$a_3$}
    		(8,2) -- node[right] {$a_4$}
    		(8,1) -- 
    		(8,0.5) -- (8,0.25);
    		\draw[densely dotted] (8,0.25) -- (8,0);
    		
    		\fill (8,4) circle (2pt);
    		\fill (8,2) circle (2pt);
    		\fill (8,1) circle (2pt);
    		\fill (8,0.5) circle (2pt);
    		\fill (8,0.25) circle (2pt);
    		
    		\draw[ultra thin] (4,0) -- node{$a_1$} (4,8);
    		
    		\fill (2,1.5) circle (3pt);
    		\draw[thick, mid arrow] (2,1.5) .. controls +(-130:0.5cm) and +(90:0.5cm) .. (1.5,0);
    		\draw[thick, mid arrow] (5.5,8) .. controls +(-90:0.7cm) and +(80:0.7cm) .. (5.2,2) node[] {$H^{-1}(\beta_1)$} .. controls +(-105:0.7cm) and +(90:1cm) .. (5,0);
    		\draw[thick] (3,8) .. controls +(-90:1cm) and +(50:1cm) .. (2,1.5);

    		\begin{scope}[xshift=12cm]
    			\draw[dashed] (0,0) -- (4,0);
    			\draw (4,0) -- node[below] {$b_1$}
    			(6,0) -- node[below] {$b_2$}
    			(7,0) -- node[below] {$b_3$}
    			(7.5,0) -- (7.75,0);
    			\draw[densely dotted] (7.75,0) -- (8,0);
    			
    			\fill (0,0) circle (2pt);
    			\fill (8,8) circle (2pt);
    			
    			\fill (4,0) circle (2pt);
    			\fill (6,0) circle (2pt);
    			\fill (7,0) circle (2pt);
    			\fill (7.5,0) circle (2pt);
    			\fill (7.75,0) circle (2pt);
    			
    			\draw[dashed] (8,8) -- (4,8);
    			\draw (4,8) -- node[above] {$b_1$}
    			(2,8) -- node[above] {$b_2$}
    			(1,8) -- node[above] {$b_3$}
    			(0.5,8) -- (0.25,8);
    			\draw[densely dotted] (0.25,8) -- (0,8);
    			
    			\fill (4,8) circle (2pt);
    			\fill (2,8) circle (2pt);
    			\fill (1,8) circle (2pt);
    			\fill (0.5,8) circle (2pt);
    			\fill (0.25,8) circle (2pt);
    			
    			\draw (0,0) -- node[left] {$a_2$}
    			(0,4) -- node[left] {$a_3$}
    			(0,6) -- node[left] {$a_4$}
    			(0,7) -- 
    			(0,7.5) -- (0,7.75);
    			\draw[densely dotted] (0,7.75) -- (0,8);
    			
    			\fill (0,4) circle (2pt);
    			\fill (0,6) circle (2pt);
    			\fill (0,7) circle (2pt);
    			\fill (0,7.5) circle (2pt);
    			\fill (0,7.75) circle (2pt);
    			
    			\draw (8,8) -- node[right] {$a_2$}
    			(8,4) -- node[right] {$a_3$}
    			(8,2) -- node[right] {$a_4$}
    			(8,1) -- 
    			(8,0.5) -- (8,0.25);
    			\draw[densely dotted] (8,0.25) -- (8,0);
    			
    			\fill (8,4) circle (2pt);
    			\fill (8,2) circle (2pt);
    			\fill (8,1) circle (2pt);
    			\fill (8,0.5) circle (2pt);
    			\fill (8,0.25) circle (2pt);
    			
    			\draw[ultra thin] (4,0) -- node{$a_1$} (4,8);
    			
    			\fill (2,1.5) circle (3pt);
    			\draw[thick, mid arrow] (2,1.5) .. controls +(40:1cm) and +(-120:1cm) .. (4.3,3) node[right] {$H^{-1}(\alpha_1)$} .. controls +(60:1cm) and +(-90:0.5cm) .. (5,8);
    			\draw[thick] (1,0) .. controls +(90:0.5cm) and +(-140:0.5cm) .. (2,1.5);
    		\end{scope}
    		
    		\begin{scope}[xshift=24cm]
    			\draw[dashed] (0,0) -- (4,0);
    			\draw (4,0) -- node[below] {$b_1$}
    			(6,0) -- node[below] {$b_2$}
    			(7,0) -- node[below] {$b_3$}
    			(7.5,0) -- (7.75,0);
    			\draw[densely dotted] (7.75,0) -- (8,0);
    			
    			\fill (0,0) circle (2pt);
    			\fill (8,8) circle (2pt);
    			
    			\fill (4,0) circle (2pt);
    			\fill (6,0) circle (2pt);
    			\fill (7,0) circle (2pt);
    			\fill (7.5,0) circle (2pt);
    			\fill (7.75,0) circle (2pt);
    			
    			\draw[dashed] (8,8) -- (4,8);
    			\draw (4,8) -- node[above] {$b_1$}
    			(2,8) -- node[above] {$b_2$}
    			(1,8) -- node[above] {$b_3$}
    			(0.5,8) -- (0.25,8);
    			\draw[densely dotted] (0.25,8) -- (0,8);
    			
    			\fill (4,8) circle (2pt);
    			\fill (2,8) circle (2pt);
    			\fill (1,8) circle (2pt);
    			\fill (0.5,8) circle (2pt);
    			\fill (0.25,8) circle (2pt);
    			
    			\draw (0,0) -- node[left] {$a_2$}
    			(0,4) -- node[left] {$a_3$}
    			(0,6) -- node[left] {$a_4$}
    			(0,7) -- 
    			(0,7.5) -- (0,7.75);
    			\draw[densely dotted] (0,7.75) -- (0,8);
    			
    			\fill (0,4) circle (2pt);
    			\fill (0,6) circle (2pt);
    			\fill (0,7) circle (2pt);
    			\fill (0,7.5) circle (2pt);
    			\fill (0,7.75) circle (2pt);
    			
    			\draw (8,8) -- node[right] {$a_2$}
    			(8,4) -- node[right] {$a_3$}
    			(8,2) -- node[right] {$a_4$}
    			(8,1) -- 
    			(8,0.5) -- (8,0.25);
    			\draw[densely dotted] (8,0.25) -- (8,0);
    			
    			\fill (8,4) circle (2pt);
    			\fill (8,2) circle (2pt);
    			\fill (8,1) circle (2pt);
    			\fill (8,0.5) circle (2pt);
    			\fill (8,0.25) circle (2pt);
    			
    			\draw[ultra thin] (4,0) -- node{$a_1$} (4,8);
    			
    			\fill (2,1.5) circle (3pt);
    			\draw[thick, mid arrow] (2,1.5) .. controls +(-45:0.7cm) and +(90:0.5cm) .. (2.5,0);
    			\draw[thick, mid arrow] (6.5,8) .. controls +(-90:0.7cm) and +(135:0.7cm) .. (7,6.5) node[left] {$H^{-1}(\alpha_2)$} .. controls +(-45:0.7cm) and +(180:0.2cm) .. (8,6);
    			\draw[thick] (0,2) .. controls +(0:0.5cm) and +(135:0.5cm) .. (2,1.5);
    		\end{scope}
    	\end{tikzpicture}
    	\caption{The images of $\beta_1$, $\alpha_1$, and $\alpha_2$ under $H^{-1}$.}
    	\label{fig:action_H-invers_on_generators}
    \end{figure}

\end{proof}

As a direct consequence of \cref{ComputeTheGenerators}, we obtain the following remark.

\begin{rem}[$-\Id$ always lifts]\label{rem:Id_lifts}
 Let $Y_h$ be a finite abelian cover. Then $h$ is a fixed point of $-\Id$ if and only if $h \sim -h$ in $\mathcal{B}_G$. This is always true as $g \mapsto -g$ is an automorphism in any abelian group.
\end{rem}

Later, we will need to understand the fixed points of the action of $H^n$. Therefore, we also compute the action of $H^n$.

\begin{prop}[Action of $H^n$] \label{prop:action_H_n}
	Let $h \in \TheSequences_G$ be a defining vector, $n\in \mathbb{N}$, and $c_n
	\coloneqq \sum\limits_{j=1}^n 2^{n-j} h_{-j}$. Then, we have:
	\begin{equation*}
		(H^n \cdot h)_k = \begin{cases}
			h_{k-n} + c_n, & \text{if } k > n \text{ or if } k \leq -1,\\
			- 3 \sum\limits_{j=1}^{n-k} 2^{n-k-j} h_{-j} - 2h_{-n+k-1}  + c_n, & \text{if } 1 \leq k \leq n.
		\end{cases}
	\end{equation*}
\end{prop}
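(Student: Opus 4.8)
The plan is to prove \cref{prop:action_H_n} by induction on $n$, feeding the single-step formula for $H$ from \cref{ComputeTheGenerators} into the induction hypothesis. Throughout I use the convention that the only meaningful indices are $k \neq 0$ and that every representative has vanishing $0$-th coordinate, so that the three cases $k > n$, $k \leq -1$, and $1 \leq k \leq n$ cover all relevant $k$. For the base case $n = 1$ I would simply check that the claimed formula collapses to the $H$-formula of \cref{ComputeTheGenerators}: here $c_1 = h_{-1}$, the first case reads $h_{k-1} + h_{-1}$ for $k \neq 0,1$, and the second case reduces to the single value $k = 1$, where the empty sum together with $-2h_{-1} + c_1$ gives $-h_{-1}$, as required.

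For the inductive step I would write $H^{n+1}\cdot[h] = H\cdot(H^n\cdot[h])$ and apply the single-step $H$-formula to the representative $g \coloneqq H^n\cdot h$ furnished by the induction hypothesis. Two preparatory observations drive everything. First, the recursion
\[
	c_{n+1} = 2c_n + h_{-(n+1)},
\]
which is immediate from the definition of $c_n$ upon factoring a $2$ out of the sum. Second, the single value $g_{-1} = h_{-n-1} + c_n$ (the case $m = -1$ of the hypothesis), which matters because the $H$-formula adds $g_{-1}$ to every shifted coordinate.

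I would then split into cases according to the range of $k$ and, crucially, according to where the \emph{shifted} index $k-1$ lands among the three regimes for $g$. For $k = 1$ the $H$-formula yields $-g_{-1} = -h_{-n-1} - c_n$, which I match against the middle case of the target at $k=1$ after substituting $c_{n+1} = 2c_n + h_{-n-1}$. For $k \geq n+2$ and for $k \leq -1$ the index $k-1$ lies in the ``$h_{m-n}+c_n$'' regime, so $(H^{n+1}\cdot h)_k = h_{k-n-1} + 2c_n + h_{-n-1} = h_{k-(n+1)} + c_{n+1}$, exactly as claimed. The delicate case is $2 \leq k \leq n+1$, where $1 \leq k-1 \leq n$ places $k-1$ in the middle regime: substituting $m = k-1$ into the geometric sum, I would observe that $n-(k-1) = (n+1)-k$ and $-n+(k-1)-1 = -(n+1)+k-1$, so the summation range, the powers of $2$, and the single $h$-term all transform precisely into the target for $n+1$, after which $g_{-1}$ supplies the remaining $c_n + h_{-n-1}$ needed to upgrade $c_n$ to $c_{n+1}$.

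I expect the main obstacle to be exactly this bookkeeping in the middle regime: tracking how the summation index shifts, verifying that the exponents $2^{n-k-j}$ transform correctly under $(n,k) \mapsto (n+1,k)$ induced by the shift $k \mapsto k-1$, and carefully handling the boundary values $k = 1$ and $k = n+1$, where $k-1$ sits at the edge between two regimes, so that no off-by-one error enters. Once the recursion $c_{n+1} = 2c_n + h_{-(n+1)}$ and the value $g_{-1} = h_{-n-1} + c_n$ are in hand, each individual case reduces to a one-line verification.
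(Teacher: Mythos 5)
Your proposal is correct and follows essentially the same route as the paper: induction on $n$ with base case given by \cref{ComputeTheGenerators}, then applying the single-step formula for $H$ to the representative $H^n\cdot h$ and splitting into the cases $k\geq n+2$ or $k\leq -1$, $2\leq k\leq n+1$, and $k=1$. Your explicit use of the recursion $c_{n+1} = 2c_n + h_{-(n+1)}$ and of $(H^n h)_{-1} = h_{-n-1} + c_n$ is only a mild repackaging of the paper's computations (and in fact makes the middle regime cleaner than the paper's expanded sums), so the two arguments are the same in substance.
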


\begin{proof}
	We prove the statement by induction on $n$. For $n = 1$, the equations are a special case of \cref{ComputeTheGenerators}. Now let $n\in \mathbb{N}$ and assume that the equations are true for $H^n$. Then we have for $k > n+1$ and for $k \leq -1$:
	\begin{align*}
		(H^{n+1}h)_k & = (H^n h)_{k-1} + (H^n h)_{-1} \\
		& = h_{k-1-n} + \sum\limits_{j=1}^n 2^{n-j} h_{-j} + h_{-1-n} + \sum\limits_{j=1}^n 2^{n-j} h_{-j} \\
		& = h_{k-(n+1)} + \sum\limits_{j=1}^{n+1} 2^{n+1-j} h_{-j}
	\end{align*}
	For $1 < k \leq n+1$:
	\begin{align*}
		(H^{n+1}h)_k & = (H^n h)_{k-1} - (H^n h)_{-1} \\
		& = - 3 \sum\limits_{j=1}^{n-k+1} 2^{n-k+1-j} h_{-j} - 2h_{-n+k-1-1} + \sum\limits_{j=1}^n 2^{n-j} h_{-j}
		 + h_{-1-n} + \sum\limits_{j=1}^n 2^{n-j} h_{-j} \\
		& = - 3 \sum\limits_{j=1}^{n+1-k} 2^{n+1-k-j} h_{-j} - 2h_{-(n+1)+k-1} + \sum\limits_{j=1}^{n+1} 2^{n+1-j} h_{-j}
	\end{align*}
	And finally for $k=1$:
	\begin{align*}
		(H^{n+1}h)_1
		= -(H^n h)_{-1}
		& = - h_{-1-n} - \sum\limits_{j=1}^n 2^{n-j} h_{-j} \\
		& = - 3 \sum\limits_{j=1}^{n} 2^{n-j} h_{-j}
		- h_{-(n+1)} + 2 \sum\limits_{j=1}^{n} 2^{n-j} h_{-j}
		\qedhere
	\end{align*}
\end{proof}

\section{Characterization of Veech groups of finite index} \label{sec:characterization_finite_index}

Let  \(G\) be a finite abelian group, \(h \in \TheSequences_G\), and \(p\colon Y_h \to X\) a covering with defining
vector \(h\). In this section, we characterize when $\Gamma(Y_h)$ has finite
index in $\Gamma(X)$.

Recall the identification of \(\Cov_G\) with \(\TheSequencesQuot_G\) given by
\cref{DescriptionBG}. By \cref{VGasStabiliser}, we have that
$\Gamma(Y_h)$ is the stabilizer of \([h]\) in \(\Gamma(X)\). Therefore
\(\Gamma(Y_h)\) has finite index in \(\Gamma(X)\) if and only if the orbit of
\([h]\) under the action of \(\Gamma(X)\) is finite. Hence, if the set
$\{H^n\cdot [h] : n \in \mathbb{N}\}$ is infinite, the index of $\Gamma(Y_h)$ is infinite as well. This set being finite, on the contrary, implies that
there exists an $n\in \mathbb{N}$ such that $[h]$ is a fixed point of $H^n$.

That finite index is actually equivalent to $h$ being a fixed point of a power of $H$ is the content of our main theorem, \cref{thm:characterization_finite_index}.
To prove the direction that being a fixed point of a power of~$H$ implies finite index of the Veech group, we consider for every $n \in \mathbb{N}$ a finite set $C_n$ that will turn out to
contain all fixed points of $H^n$ and to be invariant under the action of
$\Gamma(X)$.

\begin{definition}[The subset $C_n$]
	\label{def:C_n}
	For a finite abelian group $G$ of order $d$ and $n\in \mathbb{N}$, $C_n$ is
	defined to be the subset of $\TheSequences_G$ of bi-infinite vectors~$h$
	that fulfill the following conditions:
	\begin{itemize}
		\item \emph{$dn$--periodic backwards and forwards}: $h_k = h_{k+dn}$ and
		$h_{-k} = h_{-k-dn}$ for every $k\geq 1$.
		\item \emph{period sum is $0$}: $\sum\limits_{j=1}^{dn} (- h_j + h_{-j}) =
		0$.
		\item \emph{forward-period--backward-period relation (or forper--backper
		relation, for short) at place $k \in \{1, \ldots, dn-1\}$}: For all $k =
		1, \ldots, dn-1$:
		\begin{equation}\label{eq:forper-backper}
			2 h_{dn-k+1} - h_{dn-k} = 2 h_{-k-1} - h_{-k}.
		\end{equation}
		\item \emph{forper--backper relation at place $0$ and $dn$}:
		\begin{equation}\label{eq:forper-backper_0_dn}
			h_{dn} = - 2h_{-1} \qquad \text{and} \qquad h_{-dn} = - 2h_1.
		\end{equation}
	\end{itemize}
\end{definition}

\begin{rem}[$C_n$ is finite]
	\label{rem:C_n_finite}
	The first condition ($dn$--periodicity) implies that $C_n$ is finite for every~$n\in \mathbb{N}$. 
\end{rem}

We will now show that, for every $n\in \mathbb{N}$, any fixed point of $H^n$ is
contained in the corresponding~$C_n$.

\begin{prop}[Fixed points of $H^n$ are in $C_n$]
	\label{prop:fixed_points_H_n_in_C_n}
	Let $G$ be a finite abelian group of order $d$ and $n \in \mathbb{N}$. If $h \in \TheSequences_G$ is a fixed point of $H^n$, then we have~$h \in C_n$.

\begin{proof}
	Let $c \coloneq c_n \coloneq \sum\limits_{j=1}^n 2^{n-j} h_{-j}$. We check the
	conditions for $h$ to be an element of $C_n$:
	\begin{itemize}
		\item $h$ is $dn$--periodic forwards:
		For $k \geq 1$, iterative application of \cref{prop:action_H_n} yields
		\begin{equation*}
			h_{k+dn}
			= (H^n h)_{k+dn}
			= h_{k+(d-1)n} + c
			= \ldots
			= h_k + d \cdot c
			= h_k.
		\end{equation*}

		\item $h$ is $dn$--periodic backwards:
		For $k \geq 1$, iterative application of \cref{prop:action_H_n} yields
		\begin{equation*}
			h_{-k}
			= (H^n h)_{-k}
			= h_{-k-dn} + d \cdot c
			= h_{-k-dn}.
		\end{equation*}

		\item Period sum of $h$:
		For $k \geq n+1$, we have:
		\begin{equation*}
			-h_k + h_{-k}
			= -(H^n h)_k + h_{-k}
			= -h_{k-n} - c + (H^n h)_{-k+n} - c
			= -h_{k-n} + h_{-(k-n)} -2c.
		\end{equation*}
		Therefore, we obtain
		\begin{equation*}
			\sum\limits_{j=1}^{dn} (-h_j + h_{-j})
			= d \cdot \sum\limits_{j=1}^n (-h_j + h_{-j}) - n \cdot \sum\limits_{i=1}^{d-1} i \cdot 2 c
			= - n \cdot d(d-1) \cdot c
			= 0
			.
		\end{equation*}

		\item Forper--backper relation at place $0$:
		Iterative application of \cref{prop:action_H_n} yields
		\begin{equation*}
			h_{dn}
			= h_n + (d-1) \cdot c
			= - 2h_{-1} + c + (d-1) c = -2h_{-1}
			.
		\end{equation*}

		\item Forper--backper relation at place $dn$:
		Iterative application of \cref{prop:action_H_n} yields
		\begin{align*}
			-2h_1
			& = -2(H^n h)_1
			= 2 (H^{n-1}h)_{-1}
			= 2 \Biggl( h_{-1-(n-1)} + \sum\limits_{j=1}^{n-1} 2^{n-1-j} h_{-j} \Biggr) \\
			& = 2 h_{-n} + \sum\limits_{j=1}^{n-1} 2^{n-j} h_{-j}
			= h_{-n} + c
			= h_{-dn} + (d-1) \cdot c + c
			= h_{-dn}.
		\end{align*}
		\item
		Forper--backper relation at place $k$:
		For $k \in \{1, \ldots, n-1\}$, we have
		\begin{align*}
			& 2h_{dn-k +1} - h_{dn-k}
			= 2h_{n-k +1} + 2(d-1) c - h_{n - k} - (d -1)c \\
			& = 2 \Biggl( - 2h_{-k} + c - 3 \sum\limits_{j=1}^{k-1} 2^{k-1-j} h_{-j} \Biggr)
			 - \Biggl( - 2h_{-k-1} + c - 3 \sum\limits_{j=1}^{k} 2^{k-j} h_{-j} \Biggr) - c \\
			& = - 4 h_{-k} - 3 \sum\limits_{j=1}^{k-1} 2^{k-j} h_{-j} + 2h_{-k-1} + 3 \sum\limits_{j=1}^{k-1} 2^{k-j} h_{-j} + 3h_{-k} \\
			& = 2h_{-k-1} - h_{-k}.
		\end{align*}

		For $k=n$, we have
		\begin{align*}
			2h_{dn-n +1} - h_{dn-n}
			& = 2h_{1} + 2(d-1) c - h_{n} - (d -2)c \\
			& = 2 \Biggl( - 2h_{-n} + c - 3 \sum\limits_{j=1}^{n-1} 2^{n-1-j} h_{-j} \Biggr) - \left( - 2h_{-1} + c \right) \\
			& = - 4 h_{-n} - 3 \sum\limits_{j=1}^{n-1} 2^{n-j} h_{-j} + 2h_{-1} + c  \\
			& = - h_{-n} -3c + 2 h_{-1} + c  \\
			& = - h_{-n} + 2h_{-n-1} .
		\end{align*}
		
		And finally for $k$ with $n < k \leq dn-1$, we have $-k+n \leq -1$ and $dn +n -k > n$ and~hence
		\begin{align*}
			h_{dn-k} -h_{-k}
			& = h_{dn+n-k} - c -h_{-k+n}+c
			= h_{dn-(k-n)} - h_{-(k-n)}, \\
			2(h_{dn-k+1} -h_{-k-1})
			& = 2(h_{dn+n-k+1} - c -h_{-k+n -1}+c)
			= 2(h_{dn-(k-n) +1} - h_{-(k-n)-1})
			.
		\end{align*}
		By applying these relations successively, we can make $k$ smaller and reduce this case to the case of $k \in \{1,\ldots, n\}$ for which we already know that the forper--backper relation holds.
	\end{itemize}
	This completes the proof that $h \in C_n$.
\end{proof}
\end{prop}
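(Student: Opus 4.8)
The plan is to read off each defining condition of $C_n$ directly from the fixed-point equation, exploiting the closed formula for the $H^n$--action. Writing the fixed-point hypothesis as the system of scalar equations $h_k = (H^n\cdot h)_k$ for all $k\in\mathbb{Z}$, I would substitute the two branches of \cref{prop:action_H_n}. The single structural input coming from $G$ being a \emph{finite} abelian group of order $d$ is that every element of $G$ is annihilated by $d$; in particular $d\cdot c_n = 0$, where $c_n = \sum_{j=1}^n 2^{n-j}h_{-j}$ is the constant appearing in \cref{prop:action_H_n}. This is exactly the fact that forces all the $c_n$--error terms to collapse, and it is the only place where finiteness of $G$ enters.

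First I would establish the $dn$--periodicity. For indices $k>n$ and $k\le -1$, \cref{prop:action_H_n} gives $(H^n\cdot h)_k = h_{k-n}+c_n$, so the fixed-point equation is precisely the recurrence $h_k = h_{k-n}+c_n$. Iterating this recurrence $d$ times shifts the index by $dn$ and accumulates $d\cdot c_n = 0$, yielding $h_k = h_{k-dn}$; applied in both directions this is the forwards and backwards periodicity.

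For the period-sum condition, combining the forward and backward recurrences shows that $-h_k+h_{-k}$ is shifted by a fixed multiple of $c_n$ under $k\mapsto k-n$. Summing $-h_j+h_{-j}$ over one full period $j=1,\dots,dn$ then produces the block sum over a single period plus an arithmetic progression of $c_n$--corrections whose total is a multiple of $d\cdot c_n$, hence $0$. The forper--backper relations are the most delicate part. At the two endpoints $k=0$ and $k=dn$ I would feed the value $h_n$ (respectively $h_{-n}$) through the second branch $(H^n\cdot h)_k = -3\sum_{j=1}^{n-k}2^{n-k-j}h_{-j} - 2h_{-n+k-1}+c_n$ valid for $1\le k\le n$ (and, at place $dn$, through one extra application of the $n=1$ formula $(H\cdot g)_1 = -g_{-1}$ from \cref{ComputeTheGenerators}), then propagate to the indices $dn$ and $-dn$ using the periodicity recurrence, so that the $c_n$--terms again cancel against $d\cdot c_n = 0$. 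For the interior places $1\le k\le dn-1$ I would verify \eqref{eq:forper-backper} directly for $1\le k\le n$ using the telescoping geometric sums from the second branch, and then reduce the remaining range $n<k\le dn-1$ to that case by applying periodicity to both sides.

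The main obstacle I expect is precisely this forper--backper bookkeeping: matching the telescoping geometric sums $\sum 2^{n-k-j}h_{-j}$ coming from the second branch of \cref{prop:action_H_n} against the shifted values produced by periodicity, while tracking the $2$--power coefficients and every point where the collapse $d\cdot c_n=0$ is used. By contrast, once the recurrence $h_k=h_{k-n}+c_n$ and the annihilation identity $d\cdot c_n=0$ are in hand, the periodicity and period-sum conditions are essentially mechanical.
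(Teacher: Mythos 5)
Your proposal is correct and follows essentially the same route as the paper's proof: the fixed-point equation combined with the two branches of \cref{prop:action_H_n}, the annihilation $d\cdot g=0$ for all $g\in G$ (so in particular $d\cdot c_n=0$) as the sole use of finiteness, iteration of the recurrence $h_k=h_{k-n}+c_n$ for both periodicities, the endpoint identity $(H^n h)_1=-(H^{n-1}h)_{-1}$ at place $dn$, direct verification of \eqref{eq:forper-backper} for $1\le k\le n$, and reduction of the range $n<k\le dn-1$ by shifting indices through the recurrence. The only phrasing to tighten is in the period-sum step: the sum over $j=1,\dots,dn$ equals $d$ times the single-block sum $\sum_{j=1}^{n}(-h_j+h_{-j})$ plus the arithmetic progression of $c_n$-corrections, and it is that factor $d$ (again via $d\cdot g=0$), not only the collapse of the correction terms, that makes the whole expression vanish.
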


We now show that for every $n\in \mathbb{N}$, the finite set $C_n$ is invariant
under the action of $\Gamma(X)$.

\begin{prop}[$C_n$ is an invariant set] \label{thm:invariant_set}
	Let $G$ be a finite abelian group, $n\in \mathbb{N}$, and $C_n$ as in \cref{def:C_n}.
	Then $C_n$ is invariant under the action of $\Gamma(X)$.
\end{prop}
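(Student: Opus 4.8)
The plan is to reduce everything to the generators. Since $\Gamma(X)$ is generated by $P_1$, $P_2$, and $-\Id$ (see \cref{sec:chamanara}) and $C_n$ is finite (\cref{rem:C_n_finite}), I would show only that $P_1 \cdot h$, $P_2 \cdot h$, and $-\Id \cdot h$ lie in $C_n$ whenever $h \in C_n$. First I would check that the four defining conditions of \cref{def:C_n} are invariant under $\Aut(G)$ — this is immediate, since each condition is an identity among the $h_i$ that is respected by any (injective) homomorphism — so that membership in $C_n$ is well defined on $\TheSequencesQuot_G$ and descends to a finite subset $\overline{C_n}$. Because $\Gamma(X)$ acts on $\TheSequencesQuot_G$ by bijections, closure of $\overline{C_n}$ under the three generators forces each generator to permute the finite set $\overline{C_n}$, and hence the whole group (including inverses) preserves it. Thus no inverses need to be treated separately.

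The case of $-\Id$ is immediate: by \cref{ComputeTheGenerators} we have $(-\Id \cdot h)_k = -h_k$, and every defining condition of $C_n$ — the two periodicities, the vanishing of the period sum, and the forper--backper relations \eqref{eq:forper-backper} and \eqref{eq:forper-backper_0_dn} — is a homogeneous $\ZZ$--linear identity among the $h_i$, hence preserved under $h \mapsto -h$.

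For $P_1$ and $P_2$ I would verify the four conditions of \cref{def:C_n} one at a time for $g \coloneqq P_i \cdot h$, feeding in that $h$ itself satisfies them. The key simplification is that, by forward and backward $dn$--periodicity together with the vanishing period sum, the quantity $\sum_j (-h_j + h_{-j})$ summed over \emph{any} window of length $dn$ equals $0$; this collapses the infinite tails in the formulas of \cref{ComputeTheGenerators} and yields the two periodicities and the period-sum condition for $g$ almost at once. For example, for $P_1$ and $k \geq 1$ one gets $g_{k+dn} = h_{-k-dn} + 2\sum_{j=1}^{k+dn-1}(-h_j + h_{-j}) = h_{-k} + 2\sum_{j=1}^{k-1}(-h_j + h_{-j}) = g_k$, using backward periodicity and the vanishing of the length-$dn$ window sum $\sum_{j=k}^{k+dn-1}(-h_j + h_{-j})$.

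The substantive part, and the step I expect to be the main obstacle, is deriving the forper--backper relations for $g$ from those of $h$. This is worst for $P_2$, whose action formula is the most involved (reflecting the triple twist on the middle cylinder) and whose boundary relations at places $0$ and $dn$ interact with the exceptional value $(P_2 \cdot h)_{-1} = h_{-1}$. A convenient way to lighten this is to replace the generator $P_2$ by $H = -P_1 P_2$: the set $\{P_1, H, -\Id\}$ also generates $\Gamma(X)$, and the action of $H$ in \cref{ComputeTheGenerators} is far simpler, so the relations of $h$ enter transparently — indeed, forward periodicity of $H \cdot h$ at the first coordinate is exactly the relation $h_{dn} = -2h_{-1}$ of \eqref{eq:forper-backper_0_dn}. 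In either route the remaining work is elementary but bookkeeping-heavy: the bulk of the effort is organizing the index shifts so that each target relation for $g$ is matched against a known relation for $h$.
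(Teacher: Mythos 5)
Your strategy is essentially the paper's: reduce to the generators $P_1$, $P_2$, $-\Id$, dispose of $-\Id$ by linearity of the conditions in \cref{def:C_n}, and verify the four conditions for the parabolic generators from the formulas in \cref{ComputeTheGenerators}, using exactly the mechanism you describe (periodicity plus vanishing period sum make every length-$dn$ window sum vanish, which collapses the tails). Two of your refinements actually go beyond what the paper writes down: the paper checks only $P_1\cdot h$, $P_2\cdot h$, $-\Id\cdot h$ and never explains why inverses come for free, whereas your finiteness-forces-permutation argument supplies this; likewise your remark that the conditions are $\Aut(G)$--invariant, so that membership in $C_n$ is well defined on classes, is tacitly assumed but not stated in the paper. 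Your proposed substitution of $H=-P_1P_2$ for $P_2$ is also a genuine and worthwhile simplification: since $P_2=-P_1^{-1}H$, the set $\{P_1,H,-\Id\}$ generates $\Gamma(X)$, and one can check directly that all four conditions for $H\cdot h$ follow from those of $h$ in a few lines --- forward periodicity at $k=1$ is literally $h_{dn}=-2h_{-1}$, the period sum of $H\cdot h$ collapses to $2h_{-1}+h_{dn}=0$, and the forper--backper relation at place $k$ for $H\cdot h$ is the relation at place $k+1$ for $h$ (with the boundary cases handled by \eqref{eq:forper-backper_0_dn} and backward periodicity). This replaces the two-page $P_2$ computation in the paper's proof.

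The caveat is that your proposal defers, rather than executes, the forper--backper verification for $P_1\cdot h$, which is unavoidable in either route (one parabolic generator always remains) and is the longest computation in the paper's proof. It does go through, but it needs one more trick than "organizing index shifts": the paper uses the vanishing period sum to \emph{reverse} the long sums, e.g.\ rewriting $\sum_{j=1}^{dn-k}(-h_j+h_{-j})$ as $\sum_{j=1}^{k}(-h_{-dn+j-1}+h_{dn-j+1})$, so that the resulting terms telescope against the forper--backper relations of $h$ at the places $1,\dots,k$ and $dn-k,\dots,dn$. Nothing in your plan would fail --- you exhibit the analogous window-sum trick for the periodicity and period-sum conditions --- but as written the computational core for $P_1$ is asserted, not proved.
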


\begin{proof}
	Given $h\in C_n$, we have to check that $P_1 \cdot h \in C_n$, $P_2 \cdot
	h \in C_n$, and $-\Id \cdot h \in C_n$.

	It is clear that $P_1 h$, $P_2 h$, and $-h$ are in $\TheSequences_G$.
	Furthermore, as all the conditions in the definition of $C_n$ are linear, we see immediately
	that $-h \in C_n$.

	We now consider all conditions for $P_1 h\in C_n$. We will constantly use the computations from \cref{ComputeTheGenerators} and that $h\in C_n$ itself.

	\begin{itemize}
		\item $P_1 h$ is $dn$--periodic forwards:

		For $k \geq 1$, using that $h$ is $dn$--periodic backwards and forwards
		and that the period sum of~$h$ is~$0$, we have that
		\[
		\begin{split}
			(P_1h)_{k+dn} - (P_1h)_k & = h_{-(k+dn)} + 2\sum_{j = 1}^{k+dn-1} (-h_j + h_{-j})  - \Biggl( h_{-k} + 2\sum_{j = 1}^{k-1} (-h_j + h_{-j}) \Biggr) \\
			& = 2 \sum_{j=dn +1}^{dn +k -1} (-h_j + h_{-j}) - 2 \sum_{j=1}^{k-1} (-h_{j+dn} + h_{-j-dn})
			= 0.
		\end{split}
		\]
		\item $P_1h$ is $dn$--periodic backwards:

		For $k \geq 1$, using that $h$ is $dn$--periodic backwards and forwards
		and that the period sum of $h$ is~$0$, we have that
		\[
		\begin{split}
			(P_1h)_{-k-dn} - (P_1h)_{-k}
			& = h_{k+dn} + 2\sum_{j = 1}^{k +dn} (-h_j + h_{-j} )  - h_{k} - 2\sum_{j=1}^{k} (-h_j + h_{-j}) \\
			& = 2 \sum_{j= dn +1}^{ dn + k} (-h_j + h_{-j}) - 2 \sum_{j=1}^{k} (-h_{j+dn} + h_{-j-dn})
			= 0.
		\end{split}
		\]

		\item Period sum of $P_1 h$:

		For $k\geq 1$, it holds that
		\begin{equation*}
			      - (P_1h)_k + (P_1h)_{-k} 
			      =  - \Biggl( h_{-k} + 2 \sum_{j=1}^{k-1} (-h_j + h_{-j}) \Biggr) + h_k + 2 \sum_{j=1}^{k} (-h_{j} + h_{-j}) 
			      = -h_k + h_{-k}.
		\end{equation*}
		Thus, using that the period sum of $h$ is $0$, we have
		\begin{equation*}
			\sum\limits_{j=1}^{dn} \left( -(P_1 h)_j + (P_1 h)_{-j} \right)
			= \sum\limits_{j=1}^{dn} ( -h_j + h_{-j} )
			= 0.
		\end{equation*}

		\item Forper--backper relation at place $0$ for $P_1h$:

		Since the period sum of $h$ is $0$ and $h$ fulfills the forper--backper
		relation~\eqref{eq:forper-backper_0_dn} at places $0$ and~$dn$, we
		have that
		\[
		\begin{split}
			(P_1 h)_{dn} + 2 (P_1 h)_{-1}
			& = h_{-dn} + 2 \sum\limits_{j= 1}^{dn-1} (-h_j + h_{-j}) + 2( h_1 + 2(-h_{1} + h_{-1})) \\
			& = 2 \sum\limits_{j=1}^{dn} (- h_j + h_{-j}) - 2(-h_{dn} + h_{-dn}) + h_{-dn} - 2 h_1 + 4 h_{-1} = 0
			.
		\end{split}
		\]

		\item Forper--backper relation at place $dn$ for $P_1h$:

		Since the period sum of $h$ is $0$ and $h$ fulfills the forper--backper
		relation~\eqref{eq:forper-backper_0_dn} at place $0$, we have
		that
		\begin{equation*}
			(P_1 h)_{-dn} + 2 (P_1 h)_{1}
			= h_{dn} + 2 \sum\limits_{j= 1}^{dn} (-h_j + h_j) + 2 h_{-1} = 0
			.
		\end{equation*}

		\item Forper--backper relation at place $k\in \{1, \ldots, dn-1\}$ for $P_1h$:
		\begin{align*}
			& 2(P_1 h)_{dn -k +1} - (P_1 h)_{dn-k} +(P_1 h)_{-k} - 2 (P_1 h)_{-k-1} \\
			& = 2 \Biggl( h_{-dn+k-1} + 2 \sum\limits_{j=1}^{dn-k} (-h_j + h_{-j}) \Biggr)
			-  \Biggl( h_{-dn+k} + 2 \sum\limits_{j=1}^{dn - k - 1} (-h_j + h_{-j}) \Biggr) \\
			& \quad + \Biggl( h_{k} + 2 \sum\limits_{j=1}^k (-h_j + h_{-j}) \Biggr)
			- 2 \Biggl(h_{k+1} + 2 \sum\limits_{j=1}^{k+1} (-h_j + h_{-j}) \Biggr)
		\end{align*}
		Since $h$ fulfills the forper--backper relation~\eqref{eq:forper-backper} at place $dn-k$, we have
		\[
			2h_{-dn+k-1} - h_{-dn+k} + h_k -2h_{k+1} = 0.
		\]
		So we only have to consider the four sums in the computation. For the first sum, we use that the period sum of $h$ is $0$ and reverse the order of the summands in the third step to obtain
		\begin{align*}
			\sum\limits_{j=1}^{dn-k} (-h_j + h_{-j})
			& = -\sum\limits_{j=dn-k+1}^{dn} (-h_j + h_{-j})
			= \sum\limits_{j=1}^{k} (h_{dn-k+j} - h_{- dn +k -j}) , \\
			& = \sum\limits_{j=1}^{k} (h_{dn-j+1} - h_{- dn +j-1})
			= \sum\limits_{j=1}^{k} (-h_{-dn +j -1} + h_{dn -j +1})
			.
		\end{align*}
		Similarly, we have for the second sum
		\begin{equation*}
			\sum\limits_{j=1}^{dn - k - 1} (-h_j + h_{-j})
			= \sum\limits_{j=0}^{k} (-h_{-dn+j} + h_{dn-j})
			.
		\end{equation*}

		Hence, the above term for the forper--backper relation is equal to
		\begin{align*}
			& 2 \Biggl( 2 \sum\limits_{j=1}^{dn-k} (-h_j + h_{-j})
			- \sum\limits_{j=1}^{dn - k - 1} (-h_j + h_{-j})
			+ \sum\limits_{j=1}^k (-h_j + h_{-j})
			- 2 \sum\limits_{j=1}^{k+1} (-h_j + h_{-j}) \Biggr) \\
			& = 2 \Biggl( 2 \sum\limits_{j=1}^{k} (-h_{-dn +j -1} + h_{dn -j +1})
			- \sum\limits_{j=0}^{k} (-h_{-dn+j} + h_{dn-j}) \\
			& \quad + \sum\limits_{j=1}^k (-h_j + h_{-j})
			- 2 \sum\limits_{j=0}^{k} (-h_{j+1} + h_{-j-1}) \Biggr) \\
			& = 2 \biggl( h_{-dn} - h_{dn} + 2 h_1 -2 h_{-1} \\
			 & \quad + \! \sum_{j=1}^k ( 2(-h_{-dn +j -1} \! + \! h_{dn -j +1}) \! - \! (-h_{-dn+j} \! + \! h_{dn-j}) \! + \! (-h_j \! + \! h_{-j}) \! - \! 2(-h_{j+1} \! + \! h_{-j-1}) \! ) \! \!
			\biggr) \!
			.
		\end{align*}

		Finally, we use that $h$ fulfills the forper--backper relation~\eqref{eq:forper-backper} at any
		place $j \in \{1, \ldots, k\} \cup \{dn-1, \ldots, dn-k\}$ as well as at
		places $dn$ and $0$. Hence, the last line is equal to $0$.
	\end{itemize}

	Therefore, $P_1 h \in C_n$ for $h\in C_n$.
	We now show in a similar fashion that $P_2 h \in C_n$ as~well.
	\begin{itemize}
		\item $P_2 h$ is $dn$--periodic forwards:

		For $k \geq 1$, using that $h$ is $dn$--periodic backwards and forwards
		and that the period sum of $h$ is~$0$, we have
		\[
		\begin{split}
			(P_2 h)_{k+dn} - (P_2 h)_k
			& = h_{-1} + h_{-k-dn-1} + 2 h_{-k-dn} + 2\sum_{j = 1}^{k+dn-1} (-h_j + h_{-j}) \\
			& \quad - h_{-1} - h_{-k-1} - 2h_{-k} - 2\sum_{j = -k}^{k-1} (-h_j + h_{-j})  \\
			& = 2 \sum_{j= dn +1}^{k+dn-1} (-h_j + h_{-j}) - 2 \sum_{j=1}^{k-1} (-h_{j+dn} + h_{-j-dn})
			= 0.
		\end{split}
		\]

		\item $P_2h$ is $dn$--periodic backwards:

		For $k \geq 2$, using that $h$ is $dn$--periodic backwards and forwards
		and that the period sum of $h$ is~$0$, we have
		\[
		\begin{split}
			(P_2 h)_{-k-dn} - (P_2 h)_{-k}
			& = h_{-1} + h_{k+dn-1} + 2 h_{-k-dn} + 2\sum_{j = 1}^{k+dn-1} (-h_j  + h_{-j}) \\
			& \quad - h_{-1} - h_{k-1} -2 h_{-k} - 2\sum_{j=1}^{k-1} (- h_j +h_{-j})  \\
			& = 2 \sum_{j=dn+1}^{dn+k-1} (-h_j + h_{-j}) - 2 \sum_{j=1}^{k-1} (-h_{j+dn} + h_{-j-dn})
			= 0.
		\end{split}
		\]

		For $k=1$, we also use that $h$ fulfills the forper--backper relation~\eqref{eq:forper-backper_0_dn} at
		place $0$. Hence
		\begin{align*}
			(P_2 h)_{-1-dn} - (P_2 h)_{-1}
			& = h_{-1} + h_{1+dn-1} + 2h_{-1-dn} + 2\sum_{j = 1}^{dn} (-h_j  + h_{-j}) - h_{-1} \\
			& = h_{dn} + 2 h_{-1}
			= 0.
		\end{align*}

		\item Period sum of $P_2 h$:

		For $k\geq 2$, it holds
		\begin{align*}
			-(P_2 h)_k + (P_2 h)_{-k} 
			& =  - h_{-k-1} + h_{k-1}, \\
			\intertext{and for $k=1$, we have}
			-(P_2 h)_1 + (P_2 h)_{-1} 
			& = -(3h_{-1} + h_{-2}) + h_{-1}
			= -2h_{-1} -h_{-2}.
		\end{align*}
		Hence, using that $h$ is $dn$--periodic backwards, the period sum of $h$
		is $0$, and $h$ fulfills the forper--backper relation~\eqref{eq:forper-backper_0_dn} at place $0$, we
		have
		\[
		\begin{split}
			\sum\limits_{j=1}^{dn} \left( -(P_2 h)_j + (P_2 h)_{-j} \right)
			& = -2h_{-1} - h_{-2} + \sum_{j=2}^{dn}(-h_{-k-1} + h_{k-1}) \\
			& = -h_{-1} - \sum_{j=0}^{dn} h_{-k-1} + \sum_{j=2}^{dn} h_{k-1} \\
			& = -h_{-1} - \sum_{j=1}^{dn+1} h_{-k} + \sum_{j=1}^{dn-1} h_k \\
			& = - h_{-1} - \sum_{j=1}^{dn} (-h_k + h_{-k}) -h_{-dn-1} - h_{dn} \\
			& = -h_{dn} -2 h_{-1}
			= 0.
		\end{split}
		\]

		\item Forper--backper relation at place $0$ for $P_2h$:

		Using that $h$ is $dn$--periodic backwards, the period sum of $h$ is
		$0$, and $h$ fulfills the forper--backper relation~\eqref{eq:forper-backper_0_dn} at place $0$, we get:
		\begin{align*}
			(P_2 h)_{dn} + 2 (P_2 h)_{-1}
			& =  h_{-1} + h_{-dn-1} + 2h_{-dn} + 2 \sum\limits_{j= 1}^{dn-1} (-h_j + h_{-j}) + 2 h_{-1} \\
			& = 4 h_{-1} + 2h_{-dn} -2 (-h_{dn} + h_{-dn})
			= 0.
		\end{align*}

		\item Forper--backper relation at place $dn$ for $P_2h$:

		We use that the period sum of $h$ is $0$ and that $h$ fulfills the
		forper--backper relations~\eqref{eq:forper-backper_0_dn} and~\eqref{eq:forper-backper} at places $0$ and~$1$.
		\begin{align*}
			(P_2 h)_{-dn} + 2 (P_2 h)_{1}
			& = h_{-1} + h_{dn-1} + 2h_{-dn} + 2 \sum\limits_{j= 1}^{dn-1} \! (-h_j + h_{-j}) + 2 \left( h_{-1} + h_{-2} + 2 h_{-1} \right) \\
			& = 7h_{-1} + h_{dn-1} + 2 h_{dn} +2 h_{-2} \\
			& = (8h_{-1} + 4 h_{dn}) + ( - 2h_{dn} + h_{dn-1} + 2h_{-2} -h_{-1})
			= 0.
		\end{align*}

		\item Forper--backper relation at place $k\in \{1, \ldots, dn-1\}$ for $P_2h$:

		We assume first that $k \in \{2, \ldots, dn-1\}$. Then we have
		\begin{align*}
			& 2(P_2 h)_{dn -k +1} - (P_2 h)_{dn-k} + (P_2 h)_{-k} - 2 (P_2 h)_{-k-1} \\
			& \quad =  2 \Biggl( h_{-1} + h_{-dn+k-2} + 2h_{-dn+k-1} + 2 \sum\limits_{j=1}^{dn-k} (-h_j + h_{-j}) \Biggr) \\
			& \qquad - \Biggl( h_{-1} + h_{-dn+k-1} + 2h_{-dn+k} + 2 \sum\limits_{j=1 }^{dn - k - 1} (-h_j  +h_{-j})  \Biggr) \\
			& \qquad + \Biggl( h_{-1} + h_{k-1} + 2h_{-k} + 2 \sum\limits_{j=1}^{k-1} (-h_j + h_{-j}) \Biggr) \\
			& \qquad - 2 \Biggl( h_{-1} + h_{k} + 2h_{-k-1} + 2 \sum\limits_{j=1}^{k} (-h_j + h_{-1} ) \Biggr),
		\end{align*}
		which reduces, using that $h$ fulfills the forper--backper relation~\eqref{eq:forper-backper} at
		place $dn - k + 1$, to
		\begin{align*}
			& 2 \Biggl( 2h_{-dn+k-1} + 2 \sum\limits_{j=1}^{dn-k} (-h_j + h_{-j}) \Biggr)
			- \Biggl( 2h_{-dn+k} + 2 \sum\limits_{j=1 }^{dn - k - 1} (-h_j  +h_{-j})  \Biggr) \\
			& \quad + \Biggl( 2h_{-k} + 2 \sum\limits_{j=1}^{k-1} (-h_j + h_{-j}) \Biggr)
			- 2 \Biggl( 2h_{-k-1} + 2 \sum\limits_{j=1}^{k} (-h_j + h_{-1} ) \Biggr) \\
			& = 2 \Biggl( 2h_{dn-k+1} + 2 \sum\limits_{j=1}^{dn-k+1} (-h_j + h_{-j}) \Biggr)
			- \Biggl( 2h_{dn-k} + 2 \sum\limits_{j=1 }^{dn - k} (-h_j  +h_{-j})  \Biggr) \\
			& \quad + \Biggl( 2h_{-k} + 2 \sum\limits_{j=1}^{k-1} (-h_j + h_{-j}) \Biggr)
			- 2 \Biggl( 2h_{-k-1} + 2 \sum\limits_{j=1}^{k} (-h_j + h_{-1} ) \Biggr)
			.
		\end{align*}
		Since $h$ also fulfills the forper--backper relation at place $k$, we as for $P_1 h$ have to only consider the four sums in the computation. For the first sum, we use that the period sum of $h$ is $0$ and reverse the order of the summands in the third step to obtain
		\begin{align*}
			\sum\limits_{j=1}^{dn-k+1} (-h_j + h_{-j})
			& = - \sum\limits_{j=dn - k +2}^{dn} (-h_j + h_{-j})
			= \sum\limits_{j=1}^{k-1} (h_{dn-k+1+j} - h_{-dn + k -1 - j}) \\
			& = \sum\limits_{j=1}^{k-1} (h_{dn +1 -j} - h_{-dn -1 +j})
			= \sum\limits_{j=1}^{k-1} (- h_{-dn -1 +j} + h_{dn +1 -j})
		\end{align*}
		and similarly for the second sum
		\begin{equation*}
			\sum\limits_{j=1 }^{dn - k} (-h_j  +h_{-j})
			= \sum\limits_{j=0}^{k-1} (- h_{-dn +j} + h_{dn -j})
			.
		\end{equation*}
		Hence, the above term for the forper--backper relation is equal to
		\begin{align*}
			& 2 \Biggl( 2 \sum\limits_{j=1}^{dn-k +1} (-h_j + h_{-j})
			- \sum\limits_{j=1 }^{dn - k} (-h_j  +h_{-j})
			+ \sum\limits_{j=1}^{k-1} (-h_j + h_{-j})
			- 2\sum\limits_{j=1}^{k} (-h_j + h_{-1} )
			\Biggr) \\
			& = 2 \Biggl( 2 \sum\limits_{j=1}^{k-1} (- h_{-dn -1 +j} + h_{dn +1 -j})
			- \sum\limits_{j=0}^{k-1} (- h_{-dn +j} + h_{dn -j}) \\
			& \quad + \sum\limits_{j=1}^{k-1} (-h_j + h_{-j})
			- 2\sum\limits_{j=0}^{k-1} (-h_{j+1} + h_{-j-1} )
			\Biggr) \\
			& = 2 \biggl(
			h_{-dn} - h_{dn} + 2h_1 - 2h_{-1} \\
			& \quad \! + \sum\limits_{j=1}^{k-1} 2 (- h_{-dn -1 +j} \! + \! h_{dn +1 -j})
			\! - \! (- h_{-dn +j} \! + \! h_{dn -j})
			\! + \! (-h_j \! + \! h_{-j})
			\! - \! 2 (-h_{j+1} \! + \! h_{-j-1} ) \!
			\biggr) \!
			.
		\end{align*}
		Finally, we use that $h$ fulfills the forper--backper relation~\eqref{eq:forper-backper} at any place $j \in \{1, \ldots, k \} \cup \{dn-1, \ldots, dn-k\}$ as well as at places $dn$ and $0$. Hence, the last line is equal to $0$.

		We are left to check the forper--backper relation at place $k=1$. It holds that
		\begin{align*}
			& 2(P_2 h)_{dn} - (P_2 h)_{dn-1} + (P_2 h)_{-1} - 2 (P_2 h)_{-2} \\
			& = 2 \Biggl(h_{-1} +h_{-dn-1} +2h_{-dn} + 2 \sum\limits_{j=1}^{dn-1} ( -h_j + h_{-j}) \Biggr) \\
			& \quad - \Biggl( h_{-1} + h_{-dn} + 2h_{-dn+1} + 2 \sum\limits_{j=1 }^{dn - 2} (-h_j +h_{-j} )  \Biggr) \\
			& \quad + h_{-1}
			- 2 \left( h_{-1} + h_{1} + 2 h_{-2} + 2(-h_{1} + h_{-1}) \right) \\
			& = 2 h_{-dn-1} + 4 h_{dn} -h_{-dn} - 2 h_{dn-1} + 2(-h_{dn} + h_{-dn}) + 2 h_{1} - 4 h_{-2} -4 h_{-1} \\
			& = -2 h_{-1} + 2 h_{dn} + h_{-dn} - 2h_{dn-1} + 2h_{1} - 4h_{-2} \\
			& = 2( 2h_{dn} - h_{dn-1} + h_{-1} - 2h_{-2}) - 2(2h_{-1} + h_{dn}) + h_{-dn} + 2h_1 = 0,
		\end{align*}
		where we use that the period sum of $h$ is $0$, that $h$ is $dn$--forward
		and backwards periodic, and that~$h$ fulfills the forper--backper
		relations~\eqref{eq:forper-backper_0_dn} and~\eqref{eq:forper-backper} at places $0$, $dn$ and $1$.
	\end{itemize}
	Therefore, $P_2 h \in C_n$.
	This finishes the proof that $C_n$ is invariant under the action of $\Gamma(X)$.
\end{proof}

Combining \cref{prop:fixed_points_H_n_in_C_n} and \cref{thm:invariant_set}, we can prove now \cref{thm:characterization_finite_index}.

	\begin{proof}[Proof of \cref{thm:characterization_finite_index}]
		If the index of $\Gamma(Y_h)$ in $\Gamma(X)$ is finite, then the orbit
		of $h$ under $\left\langle H \right\rangle$ has to be finite. In
		particular, there needs to exist an $n\in \mathbb{N}$ such that $H^n
		\cdot h = h$. 

		For the converse, assume that there exists an $n\in \mathbb{N}$ such that $h$ is a
		fixed point of $H^n$. Then,~$h \in C_n$ by
		\cref{prop:fixed_points_H_n_in_C_n} and $C_n$ is a finite set by \cref{rem:C_n_finite}.
		From \cref{thm:invariant_set}, it
		follows that the orbit of $h$ under $\Gamma(X)$ can only contain
		elements of $C_n$ and hence is finite. By~\cref{VGasStabiliser}, this is equivalent to the index
		$[\Gamma(X) : \Gamma(Y_h)]$ being finite.
	\end{proof}

While it is not true that every element of $C_n$ is a fixed point of $H^n$ for a given $n$, from
\cref{thm:characterization_finite_index} we can deduce the following weaker
statement.

\begin{cor}[Elements of $C_n$ are fixed points of some power of $H$]
	\label{cor:fixed_points_H_n_vs_elements_of_C_n}
	Let $G$ be a finite abelian group. Then $\bigcup\limits_{n\in \mathbb{N}} C_n =
	\bigcup\limits_{n \in \mathbb{N}} \{h \in \TheSequences_G : H^n h = h\}$.
\end{cor}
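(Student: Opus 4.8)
The plan is to establish the two inclusions separately. The inclusion $\bigcup_{n}\{h \in \TheSequences_G : H^n h = h\} \subseteq \bigcup_n C_n$ is immediate: if $h$ is a fixed point of $H^n$, then \cref{prop:fixed_points_H_n_in_C_n} already places $h$ in $C_n$, and hence in the left-hand union. All the work is in the reverse inclusion, where I would argue that the map induced by $H$ permutes the finite set $C_n$, so that every element of $C_n$ is genuinely periodic under $H$.

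So fix $h \in C_n$. First I would record that $C_n$ is invariant under the coordinate-wise action of $\Aut(G)$: every defining condition in \cref{def:C_n} is $\ZZ$--linear, hence preserved when one applies a fixed $q \in \Aut(G)$ to all entries, so $C_n$ is a union of $\sim$--equivalence classes. Using this, I would upgrade the invariance of $C_n$ under $P_1$, $P_2$, and $-\Id$ from \cref{thm:invariant_set} to invariance under $H$: since $H = -P_1P_2$, applying the representative maps for $P_2$, $P_1$, and $-\Id$ in turn (each preserving $C_n$) yields a representative of $H \cdot [h]$ lying in $C_n$; as $H \cdot h$ differs from this representative only by an automorphism of $G$ and $C_n$ is $\Aut(G)$--invariant, we conclude $H \cdot h \in C_n$.

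Next I would show that the representative map $h \mapsto H \cdot h$ is injective on $\TheSequences_G$, reading this directly off the formula for $H$ in \cref{ComputeTheGenerators}: the entry at $k = 1$ equals $-h_{-1}$ and so recovers $h_{-1}$; the entries at $k \neq 0,1$ equal $h_{k-1} + h_{-1}$ and so recover all remaining $h_j$ with $j \neq 0$; together with $h_0 = 0$ this determines $h$ completely. An injective self-map of the finite set $C_n$ (finite by \cref{rem:C_n_finite}) is necessarily a bijection, hence a permutation of $C_n$. Consequently $h$ lies in a finite cycle, so $H^m \cdot h = h$ for some $m \in \mathbb{N}$. Since iterating the representative map of $H$ reproduces the representative map of $H^m$ computed in \cref{prop:action_H_n}, this says exactly that $h$ is a fixed point of $H^m$, giving $h \in \bigcup_m \{h \in \TheSequences_G : H^m h = h\}$ and completing the reverse inclusion.

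The main obstacle is conceptual rather than computational: the $\Gamma(X)$--action lives naturally on the equivalence classes $[h]$, whereas the statement concerns honest fixed points at the level of vectors $h$. Merely knowing that the orbit stays in the finite set $C_n$ would only yield \emph{eventual} periodicity, and tracking the class would only yield periodicity up to an automorphism of $G$; pinning down that $h$ itself returns to $h$ is precisely what injectivity of the representative map $h \mapsto H \cdot h$ buys us. Verifying that injectivity, and ensuring that $C_n$ is invariant already at the level of vectors (via its $\Aut(G)$--invariance, which is what lets the generator-by-generator invariance of \cref{thm:invariant_set} be passed to $H$), are therefore the two points that require care.
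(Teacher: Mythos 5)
Your proof is correct, but it takes a genuinely different route from the paper. The paper does not give the corollary a standalone proof: it deduces it from \cref{thm:characterization_finite_index}, i.e., the inclusion $\bigcup_n C_n \subseteq \bigcup_n\{h : H^n h = h\}$ is obtained geometrically, via the chain ``$h \in C_n$ $\Rightarrow$ the $\Gamma(X)$--orbit of $[h]$ stays in the finite set $C_n$ (\cref{thm:invariant_set}, \cref{rem:C_n_finite}) $\Rightarrow$ $[\Gamma(X):\Gamma(Y_h)]$ is finite (\cref{VGasStabiliser}) $\Rightarrow$ $h$ is fixed by some power of $H$ (forward direction of \cref{thm:characterization_finite_index}).'' You instead stay entirely at the combinatorial level: you never invoke Veech groups, coverings, or the stabilizer lemma, replacing that machinery by three observations — $C_n$ is invariant under the entrywise $\Aut(G)$--action (all defining conditions are $\ZZ$--linear), $C_n$ is invariant under the vector-level $H$--map (via $H = -P_1P_2$, \cref{thm:invariant_set}, and the $\Aut(G)$--invariance to absorb the representative ambiguity), and the $H$--map is injective on $\TheSequences_G$ (read off from \cref{ComputeTheGenerators}), so that $H$ permutes the finite set $C_n$ and every element is genuinely periodic. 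The shared ingredients are \cref{prop:fixed_points_H_n_in_C_n} for the easy inclusion and \cref{thm:invariant_set} plus \cref{rem:C_n_finite} for finiteness/invariance; the mechanisms converting ``trapped in a finite set'' into ``fixed point of $H^m$'' differ. What the paper's route buys is brevity: given that \cref{thm:characterization_finite_index} is already proven at that point, the corollary is immediate. What your route buys is self-containedness and precision: you in effect give an elementary re-proof of the relevant piece of the forward direction of \cref{thm:characterization_finite_index}, and in doing so you make explicit two points the paper glosses over — that the group action lives on classes $[h]$ while the statement concerns honest vectors (your $\Aut(G)$--invariance of $C_n$ and injectivity handle exactly this mismatch), and that mere forward-invariance of a finite set would only give eventual periodicity, with injectivity being what upgrades it to genuine periodicity.
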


Given an element $h \in \TheSequences_G$, this corollary allows us to check whether the index of ~$\Gamma(Y_h)$ is finite in $\Gamma(X)$ without having to compute the orbit of $h$ under~$\langle H \rangle \leq \Gamma(X)$.

\begin{prop}[Algorithmic version of \cref{thm:characterization_finite_index}]
	Let $G$ be a finite abelian group of degree $d$ and $h \in \TheSequences_G$. 
	If $h$ is not periodic backwards and/or forwards,~$\Gamma(Y_h)$ has infinite index in $\Gamma(X)$.
	
	If $h$ is periodic backwards and forwards, choose $m \in \mathbb{N}$ to be the smallest natural number such that $h$ is $m$--periodic backwards and forwards. Furthermore, let $n \in \mathbb{N}$ such that $dn$ is the least common multiple of $m$ and $d$. Then $\Gamma(Y_h)$ has finite index in $\Gamma(X)$ if and only if $h$ fulfills the forper--backper relations at all places $0, \ldots, dn$ (with respect to~$dn$).

\begin{proof}
	If $[\Gamma(X) : \Gamma(Y_h)]$ is finite, by \cref{thm:characterization_finite_index,prop:fixed_points_H_n_in_C_n}, there has to exist an~$n \in \mathbb{N}$ such that $h$ is $dn$--periodic backwards and forwards. This proves the first statement.
	
	Now let $h$, $m$, and  $n$ be as described for the second statement.
	By \cref{thm:characterization_finite_index,cor:fixed_points_H_n_vs_elements_of_C_n}, we have that $[\Gamma(X) : \Gamma(Y_h)]$ is finite if and only if $h \in C_{n'}$ for some $n' \in \mathbb{N}$. Now we show that this is true if and only if~$h$ fulfills the forper--backper relations at all places $0, \ldots, dn$ (with respect to $dn$).

	``$\Rightarrow$'': Let $h \in C_{n'}$. In particular, $h$ is $dn'$--periodic backwards and forwards. By the choice of~$m$ and $n$, we have that $n'$ is a multiple of $n$. Together with $dn$--periodicity backwards and forwards, fulfilling the forper--backper relations with respect to $dn'$ then implies fulfilling them with respect to $dn$.

	``$\Leftarrow$'': Assume that $h$ fulfills the forper--backper relations with respect to~$dn$.
	As $h$ is $m$--periodic backwards and forwards and $dn$ is a multiple of $m$, $h$ is also $dn$--periodic as well as $d^2 n$--periodic backwards and forwards.
	Hence, the period sum of $h$ with respect to $d^2n$ is $d$ times the period sum with respect to $dn$ and therefore equal to $0$.

	As $h$ is $dn$--periodic, fulfilling the forper--backper relations with respect to $dn$ is equivalent to fulfilling the forper--backper relations with respect to $d^2 n$. Hence $h$ fulfills the forper--backper relations with respect to $d^2 n$ as well.
	Therefore, $h \in C_{d^2 n}$.
\end{proof}
\end{prop}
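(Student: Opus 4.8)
The plan is to funnel the whole statement through the finite, $\Gamma(X)$--invariant sets $C_n$, turning the analytic question about the index of $\Gamma(Y_h)$ into a finite combinatorial check on the defining vector. The backbone is the chain of equivalences: $\Gamma(Y_h)$ has finite index in $\Gamma(X)$ if and only if $h$ is a fixed point of $H^n$ for some $n$ by \cref{thm:characterization_finite_index}, which by \cref{cor:fixed_points_H_n_vs_elements_of_C_n} holds if and only if $h \in C_{n'}$ for some $n' \in \mathbb{N}$. So the entire proposition reduces to translating the condition ``$h \in C_{n'}$ for some $n'$'' into the explicit periodicity-plus-forper--backper criterion read off at the single canonical period $dn = \mathrm{lcm}(m,d)$.

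For the first assertion I would argue by contraposition. If the index is finite, then by \cref{thm:characterization_finite_index} together with \cref{prop:fixed_points_H_n_in_C_n} the vector $h$ lies in some $C_n$, and membership in $C_n$ forces $h$ to be $dn$--periodic both backwards and forwards by the very first condition in its definition. Hence any $h$ that fails to be periodic in at least one direction cannot produce a finite-index Veech group.

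For the second assertion, having reduced to ``$h \in C_{n'}$ for some $n'$'', I would prove the two implications separately. For ``$\Rightarrow$'', from $h \in C_{n'}$ I read off that $h$ is $dn'$--periodic, so $m \mid dn'$; since $d \mid dn'$ as well, the choice $dn = \mathrm{lcm}(m,d)$ gives $dn \mid dn'$, i.e.\ $n \mid n'$. The $dn$--periodicity of $h$ then lets me collapse each forper--backper relation stated at the larger period $dn'$ to the corresponding relation at period $dn$, since the relevant indices agree modulo $dn$. For ``$\Leftarrow$'', given that $h$ is $m$--periodic and satisfies the forper--backper relations at period $dn$, I would exhibit an explicit set $C_{n'}$ containing it, taking $n' = dn$ so that the relevant period becomes $d^2 n$. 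The periodicity and all forper--backper conditions pass from period $dn$ to the multiple $d^2 n$, since these relations only involve indices read modulo the period.

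The one condition in the definition of $C_{n'}$ that does not transfer for free, and which I expect to be the crux, is ``period sum is $0$''; it need not hold at period $dn$ itself. This is precisely why one inflates the period by the factor $d = \lvert G \rvert$: over a window of length $d^2 n$ the period sum equals $d$ times the period sum over a window of length $dn$, and since every element of an order-$d$ group is annihilated by multiplication by $d$, this $d$--fold sum vanishes automatically. This is the only step where the order of $G$, rather than merely its being abelian, enters, and it is what makes $n' = dn$ the right choice in the ``$\Leftarrow$'' direction.
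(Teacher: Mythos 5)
Your proposal is correct and follows essentially the same route as the paper's own proof: the same reduction via \cref{thm:characterization_finite_index} and \cref{cor:fixed_points_H_n_vs_elements_of_C_n} to membership in some $C_{n'}$, the same divisibility argument ($m \mid dn'$ and $d \mid dn'$ force $n \mid n'$) for the forward direction, and the same period inflation by the factor $d = \lvert G \rvert$ in the converse, so that the period sum over a window of length $d^2 n$ vanishes by Lagrange's theorem. The only discrepancy is notational and in your favor: you correctly conclude $h \in C_{dn}$ (whose defining period is $d^2 n$), whereas the paper writes $C_{d^2 n}$, which under \cref{def:C_n} would refer to period $d^3 n$.
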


\section{Special case \texorpdfstring{$d=2$}{d=2}: Computation of the index}\label{sec:d=2}

In the case when the cover is of degree $d=2$, we can strengthen
\cref{thm:characterization_finite_index} to a version which enables an explicit
computation of the possible indices of the Veech group.

With $G = \mathbb{Z}/2\mathbb{Z}$, we immediately have from \cref{DescriptionBG} that $\TheSequences_G =
(\mathbb{Z}/2\mathbb{Z})^{\mathbb{Z}}\backslash\{\underline{0}\}$ is isomorphic
to $\Cov_G$ as the automorphism group of $G$ is trivial.

In the following, we mimic the arguments of
\cref{sec:characterization_finite_index} but replace the set $C_n$ with a
smaller set $W_n$ with similar properties.

\begin{definition}[Weakly $n$--periodic vectors]
	Let $n\in \mathbb{N}$.
	A vector $h\in \TheSequences_{\mathbb{Z}/2\mathbb{Z}}$ is called \emph{weakly $n$--periodic} if $h_{k+n} - h_k = h_n$ holds for every~$k \in \mathbb{Z}$.
	
	We denote the set of weakly $n$--periodic vectors in $\TheSequences_{\mathbb{Z}/2\mathbb{Z}}$ by $W_n$.
\end{definition}

Note that we work now in $G = \mathbb{Z}/2\mathbb{Z}$ where $x = -x$ for every $x \in G$. Hence setting~$k= -n$ in the definition yields $h_n = h_0 - h_{-n} = h_{-n}$ for every $h\in W_n$.

The set $W_n$ coincides with the set of fixed points of $H^n$, as we show now.

\begin{prop}[Fixed points of $H^n$] \label{lem:fixed_points_H_n_degree_2}
 Let $h \in \TheSequences_{\mathbb{Z}/2\mathbb{Z}}$ and $n\in \mathbb{Z}$.
 Then $h$ is a fixed point of $H^n$ if and only if~$h$ is weakly $n$--periodic.
\end{prop}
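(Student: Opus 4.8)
The plan is to specialize the action formula for $H^n$ from \cref{prop:action_H_n} to the group $G = \mathbb{Z}/2\mathbb{Z}$, where the arithmetic collapses dramatically, and then to read off that the fixed-point equation is literally the weak-periodicity condition. Throughout I use that in $\mathbb{Z}/2\mathbb{Z}$ one has $x = -x$ for every $x$, so that $2^m \equiv 0$ for $m \geq 1$ while $2^0 = 1$, and $-3 \equiv 1$. Since \cref{prop:action_H_n} is stated for $n \in \mathbb{N}$, I first reduce the statement to that case: for $n = 0$ the map $H^0$ is the identity and weak $0$-periodicity reads $h_k - h_k = h_0 = 0$, so both conditions hold for every $h$; and since $h$ is fixed by $H^n$ exactly when it is fixed by $H^{-n}$, while (using $x = -x$) weak $n$-periodicity and weak $(-n)$-periodicity are equivalent, it suffices to treat $n \geq 1$.

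First I would simplify the constant $c_n = \sum_{j=1}^n 2^{n-j} h_{-j}$: modulo $2$ only the top summand survives, so $c_n = h_{-n}$. Feeding this into \cref{prop:action_H_n}, the term $-2 h_{-n+k-1}$ vanishes, the coefficient $-3$ becomes $1$, and in the sum $\sum_{j=1}^{n-k} 2^{n-k-j} h_{-j}$ only the index $j = n-k$ contributes, giving $h_{k-n}$ (for the boundary value $k = n$ the sum is empty and the constant $c_n = h_{-n}$ already matches). Hence both branches of the piecewise formula collapse to the single expression
\begin{equation*}
	(H^n \cdot h)_k = h_{k-n} + h_{-n}
\end{equation*}
for all $k \neq 0$; at $k = 0$ both $h$ and $H^n \cdot h$ vanish by the defining condition on $\TheSequences_{\mathbb{Z}/2\mathbb{Z}}$, and indeed $h_{-n} + h_{-n} = 0 = h_0$, so the same identity holds trivially there as well.

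Next I would translate the fixed-point condition. From the display, $H^n \cdot h = h$ is equivalent to $h_k = h_{k-n} + h_{-n}$ for every $k \in \ZZ$, which after the substitution $k \mapsto k+n$ reads $h_{k+n} - h_k = h_{-n}$ for every $k$. This is almost the defining relation of weak $n$-periodicity, namely $h_{k+n} - h_k = h_n$, the only apparent discrepancy being whether the common difference is labelled $h_n$ or $h_{-n}$. The final step is to note that both conditions force $h_n = h_{-n}$: evaluating the fixed-point relation at $k = 0$ yields $h_n - h_0 = h_{-n}$, hence $h_n = h_{-n}$, while for a weakly $n$-periodic vector this is exactly the identity recorded in the remark following the definition (set $k = -n$ and use $-h_{-n} = h_{-n}$). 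Once $h_n = h_{-n}$ is available, the two relations coincide verbatim, and the equivalence follows.

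I expect the only genuine subtlety to lie in the first step: checking that the two cases of \cref{prop:action_H_n} really do merge into one clean formula over $\mathbb{Z}/2\mathbb{Z}$, including the empty-sum boundary case $k = n$, and handling the index $k = 0$, which is omitted from the cases in \cref{prop:action_H_n} but must be verified separately. After that, the equivalence of the two phrasings of the constant-difference condition is immediate from $x = -x$.
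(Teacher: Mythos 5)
Your proposal is correct and follows essentially the same route as the paper: specialize \cref{prop:action_H_n} to $\mathbb{Z}/2\mathbb{Z}$ to get $(H^n \cdot h)_k = h_{k-n} + h_{-n}$, then identify the fixed-point equation with weak $n$--periodicity. Your extra care with the boundary cases ($k=n$, $k=0$), the identity $h_n = h_{-n}$, and the reduction of $n \in \mathbb{Z}$ to $n \geq 1$ only makes explicit what the paper's proof leaves implicit.
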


\begin{proof}
 	Specializing the calculations in \cref{prop:action_H_n} to $G =
 	\mathbb{Z}/2\mathbb{Z}$, we obtain $(H^n \cdot h)_k = h_{k-n} + h_{-n}$ for all
 	$k \in \mathbb{Z}$.
 
 	If $h$ is a fixed point of $H^n$, we obtain
 	\begin{equation*}
 		h_{k+n} - h_k = (H^n \cdot h)_{k+n} - h_k = h_{(k+n)-n} + h_{-n} - h_k = h_n
 	\end{equation*}
 	for every~$k \in \mathbb{Z}$ which is the definition of weak $n$-periodicity.
 
 	If $h$ is weakly $n$--periodic, we obtain 
 	\begin{equation*}
 		(H^n \cdot h)_{k}
 		= h_{k-n} + h_{-n}
 		= h_{k-n} + h_n
 		= h_{(k-n) +n}
 		= h_k
 	\end{equation*}
 	for every $k\in \mathbb{N}$ which shows that $h$ is a fixed point of $H^n$.
\end{proof}

Note that weakly $n$--periodic vectors in $\TheSequences_{\mathbb{Z}/2\mathbb{Z}}$ are always $2n$--periodic and are contained in~$C_n$.
In particular, the set~$W_n$ of weakly $n$--periodic vectors is finite for every $n\in \mathbb{N}$.

Moreover, for each $h\in W_n$, there exists a smallest $m \in \mathbb{N}$ such
that $h$ is a fixed point of~$H^m$. Even more, we also have that $m$ is a
divisor of $n$, hence $W_m \subseteq W_n$ for every $m \in \mathbb{N}$ with~$m
\mid n$. Hence, we can define $W_n^\ast \coloneqq W_n \setminus
\bigcup\limits_{m \mid n, m\neq n} W_m$ as the set of $h \in
\TheSequences_{\mathbb{Z}/2\mathbb{Z}}$ such that $h$ is a fixed point of~$H^n$
but not of any $H^m$ for $m \in \mathbb{N}$, $m<n$.

In the next lemma, we see that $W_n$ is a suitable replacement of $C_n$ in the sense that it
is an invariant set under the action of~$\Gamma(X)$. By definition, this means that $W_n^\ast$ is also an invariant set under~$\Gamma(X)$.

\begin{lem}[$\Gamma(X)$ preserves weak periodicity] \label{lem:Veech_group_preserves_weak_periodicity}
	For any $n\in\mathbb{N}$, we have that $\Gamma(X) \cdot W_n \subseteq W_n$.
\end{lem}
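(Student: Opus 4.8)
The plan is to use that $\Gamma(X)$ is generated by $P_1$, $P_2$, and $-\Id$ (as recalled in \cref{sec:chamanara}), so it suffices to show that each of these three generators maps $W_n$ into itself. The whole computation becomes tractable because, over $G = \mathbb{Z}/2\mathbb{Z}$, the action formulas of \cref{ComputeTheGenerators} collapse dramatically: every term of the form $2(\cdots)$ vanishes, and $-x = x$ for all $x \in G$. In particular $(-\Id\cdot h)_k = -h_k = h_k$, so $-\Id$ acts trivially and its case is immediate (this is \cref{rem:Id_lifts} specialized to $\mathbb{Z}/2\mathbb{Z}$). For the other two generators the formulas simplify to
\[
	(P_1\cdot h)_k = h_{-k} \qquad\text{and}\qquad (P_2\cdot h)_k = h_{-1} + h_{-k-1} \quad \text{for all } k \in \mathbb{Z},
\]
i.e. $P_1$ acts as index-reversal and $P_2$ as a reversal-with-shift; one checks directly that both preserve the normalization $h_0 = 0$ (for $P_2$, note $h_{-1}+h_{-1}=0$).

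Then I would verify the weak-periodicity condition $h'_{k+n} - h'_k = h'_n$ for $h' = P_1\cdot h$ and for $h' = P_2\cdot h$, assuming $h \in W_n$. The two facts I will use repeatedly are the defining relation $h_{m+n} - h_m = h_n$ (for all $m$) and its immediate consequence $h_n = h_{-n}$ noted right after the definition of weak periodicity. For $P_1$, writing $h'_k = h_{-k}$ gives $h'_{k+n} - h'_k = h_{-k-n} - h_{-k}$; applying weak periodicity of $h$ at index $m = -k-n$ and using $-x = x$ turns this into $h_n = h_{-n} = h'_n$. For $P_2$, writing $h'_k = h_{-1} + h_{-k-1}$ gives $h'_{k+n} - h'_k = h_{-k-n-1} - h_{-k-1} = h_n$ (weak periodicity at $m = -k-n-1$), while $h'_n = h_{-1} + h_{-n-1} = h_{-1} - h_{-n-1} = h_n$ (weak periodicity at $m = -n-1$). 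In both cases $h'_{k+n} - h'_k = h_n = h'_n$, so $h' \in W_n$.

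There is a more conceptual way to organize the same computation that I would mention: by \cref{lem:fixed_points_H_n_degree_2}, $W_n$ is exactly the fixed-point set of $H^n$, and this set coincides with that of $H^{-n}$. The simplified formulas show that $P_1$ and $P_2$ act as involutions and satisfy $P_i H^n P_i^{-1} = H^{-n}$ as transformations of $\TheSequences_{\mathbb{Z}/2\mathbb{Z}}$ (indeed $P_1$ is the reversal $R$ with $R H R = H^{-1}$, and $H = P_1 P_2$ over $\mathbb{Z}/2\mathbb{Z}$ forces the same relation for $P_2$). Any transformation conjugating $H^n$ to $H^{-n}$ carries the fixed points of $H^n$ to the fixed points of $H^{-n}$, which is the same set, giving $P_i\cdot W_n \subseteq W_n$ at once.

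Since all steps are short index manipulations in $\mathbb{Z}/2\mathbb{Z}$, there is no serious obstacle; the only thing to be careful about is the bookkeeping of index shifts together with the systematic use of $x = -x$, and the observation that the unified formulas for $P_1$ and $P_2$ genuinely hold at every index including $k=0$ (so that the boundary indices $0$ and $\pm 1$ need not be treated separately). The slightly more delicate of the two direct checks is $P_2$, where the value $h'_n$ must be matched to $h_n$ via the auxiliary identity $h_{-1} + h_{-n-1} = h_n$.
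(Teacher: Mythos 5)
Your proof is correct and follows essentially the same route as the paper's: reduce to the generators $P_1$, $P_2$, $-\Id$, use the $\mathbb{Z}/2\mathbb{Z}$--simplified formulas from \cref{ComputeTheGenerators} (all terms with coefficient $2$ vanish), and verify weak $n$--periodicity by the same two index shifts, including the auxiliary identity $h_{-1}+h_{-n-1}=h_n$ in the $P_2$ case. Your supplementary conjugation argument ($P_i$ involutive with $P_iH^nP_i^{-1}=H^{-n}$, combined with \cref{lem:fixed_points_H_n_degree_2}) is also sound, but the core verification coincides with the paper's proof.
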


\begin{proof}
	Let $h \in \TheSequences_{\mathbb{Z}/2\mathbb{Z}}$ be a weakly $n$--periodic
	vector. We check through calculations in~$\mathbb{Z}/2\mathbb{Z}$ that $-\Id \cdot h$, $P_1 \cdot h$, and $P_2 \cdot h$ are weakly $n$--periodic as well. 
	For $k \in \mathbb{Z}$, we have, by \cref{ComputeTheGenerators}, that:
	\begin{align*}
		(-h)_{k+n} - (-h)_k
		& = -h_{k+n} + h_k = h_n = (-h)_n, \\
		(P_1 h)_{k+n} - (P_1 h)_k
		& = h_{-k-n} - h_{-k} = h_n = h_{-n} = (P_1 h)_n.
	\end{align*}
	To prove the statement for $P_2$,
	we use the weak $n$--periodicity of $h$ first at place $-k-n-1$ and then at place $-n-1$:
	\begin{equation*}
		(P_2 h)_{k+n} - (P_2 h)_k = h_{-1} + h_{-k-n-1} - (h_{-1} + h_{-k-1})
		= - h_n 
		= h_{-n-1} + h_{-1} = (P_2 h)_n.
		\qedhere
	\end{equation*}
\end{proof}

With these information on the set $W_n$, we can now proceed as in \cref{sec:characterization_finite_index} and prove \cref{thm:criterion_finite_index_p=2}.

\begin{proof}[Proof of \cref{thm:criterion_finite_index_p=2}]
	Analogously to the proof of \cref{thm:characterization_finite_index}, we have that the Veech group of $Y_h$ has finite index in the Veech group of the Chamanara surface if and only if $h$ is weakly $n$--periodic for some $n \in \mathbb{N}$ (which is by \cref{lem:fixed_points_H_n_degree_2} equivalent to being a fixed point of $H^n$).
	
	As $h_0 = 0$ for every $h \in \TheSequences_{\mathbb{Z}/2\mathbb{Z}}$, every $n$--periodic vector is also weakly $n$--periodic. Furthermore, every weakly $n$--periodic vector is also $2n$--periodic. Hence, the elements of $\cup_{n \in \mathbb{N}} W_n$ are exactly the periodic vectors.
\end{proof}

To prove \cref{thm:realization_F_n}, we now study the Veech groups of finite index in more detail by describing their Schreier coset graphs in \cref{prop:possible_Schreier_coset_graphs,prop:all_Schreier_coset_graphs} and calculating their index, see \cref{cor:every_free_group_is_projective_Veech_group}.

For a given subgroup $H$ of a group~$G$ with generating set $S$, the vertices of the \emph{Schreier coset graph} are the cosets $gH = \{gh \mid h \in H\}$ for~$g \in G$. There is an edge between two cosets~$gH$ and~$g'H$ if there exists an $s \in S$ such that $sgH = g'H$, similar to Cayley graphs.
In particular, the number of vertices in the Schreier coset graph of $H$ is the index of $H$ in $G$.

In the following, we always suppress the edges coming from $-\Id$ in the Schreier coset graphs as $-\Id$ is contained in all of the groups involved and all edges labelled with $-\Id$ would be loops.

\begin{prop}[Possible Schreier coset graphs for covers from elements in $W_n$]
	\label{prop:possible_Schreier_coset_graphs}
	Let $n \in \mathbb{N}$ and $h \in W_n^\ast$. For the
	subgroup $\Gamma(Y_h)$ in $\Gamma(X)$, there are two possible types of
	Schreier coset graphs, which we call \emph{Striezel type} and \emph{Kranz
	type} (see \cref{fig:kranz_striezel} on Page~\pageref{fig:kranz_striezel} for the names).

	A graph of Striezel type has $n$ vertices and is of this form:

\begin{center}
  \begin{tikzpicture}[scale=2]
   \draw[fill] (0,0) circle (1pt);
   \draw[fill] (1,0) circle (1pt);
   \draw[fill] (2,0) circle (1pt);
   \draw[fill] (3,0) circle (1pt);
   \draw[fill] (4,0) circle (1pt);
   \draw[fill] (5,0) circle (1pt);
   \draw[fill] (6,0) circle (1pt);

   \draw[->] (-0.1,0.1) arc (20:340:0.3) node[left]{$P_2$};

   \draw[->] (0.1,0.1) to [bend left] node[above]{$P_1$} (0.9,0.1);
   \draw[->] (1.1,0.1) to [bend left] node[above]{$P_2$} (1.9,0.1);
   \draw[->] (2.1,0.1) to [bend left] node[above]{$P_1$} (2.9,0.1);
   \draw (3.5,0) node {$\ldots$};
   \draw[->] (4.1,0.1) to [bend left] node[above]{$P_1$} (4.9,0.1);
   \draw[->] (5.1,0.1) to [bend left] node[above]{$P_2$} (5.9,0.1);

   \draw[->] (6.1,0.1) arc (160:-160:0.3) node[right]{$P_1$};

   \draw[->] (5.9,-0.1) to [bend left] node[below]{$P_2$} (5.1,-0.1);
   \draw[->] (4.9,-0.1) to [bend left] node[below]{$P_1$} (4.1,-0.1);
   \draw[->] (2.9,-0.1) to [bend left] node[below]{$P_1$} (2.1,-0.1);
   \draw[->] (1.9,-0.1) to [bend left] node[below]{$P_2$} (1.1,-0.1);
   \draw[->] (0.9,-0.1) to [bend left] node[below]{$P_1$} (0.1,-0.1);
  \end{tikzpicture}
\end{center}

  A graph of Kranz type has $2n$ vertices and is of this form:

\begin{center}
  \begin{tikzpicture}[scale=2]
   \draw[fill] (22.5:1) circle (1pt);
   \draw[fill] (67.5:1) circle (1pt);
   \draw[fill] (112.5:1) circle (1pt);
   \draw[fill] (157.5:1) circle (1pt);
   \draw[fill] (202.5:1) circle (1pt);
   \draw[fill] (247.5:1) circle (1pt);
   \draw[fill] (292.5:1) circle (1pt);
   \draw[fill] (337.5:1) circle (1pt);

   \draw (0,-1) node {$\ldots$};

   \draw[->] (240:1.1) to [bend left] node[left]{$P_2$} (210:1.1);
   \draw[->] (195:1.1) to [bend left] node[left]{$P_1$} (165:1.1);
   \draw[->] (150:1.1) to [bend left] node[left]{$P_2$} (120:1.1);
   \draw[->] (105:1.1) to [bend left] node[above]{$P_1$} (75:1.1);
   \draw[->] (60:1.1) to [bend left] node[above]{$P_2$} (30:1.1);
   \draw[->] (15:1.1) to [bend left] node[right]{$P_1$} (-15:1.1);
   \draw[->] (-30:1.1) to [bend left] node[right]{$P_2$} (-60:1.1);

   \draw[->] (-60:0.9) to [bend left] node[above]{$P_2$} (-30:0.9);
   \draw[->] (-15:0.9) to [bend left] node[left]{$P_1$} (15:0.9);
   \draw[->] (30:0.9) to [bend left] node[below]{$P_2$} (60:0.9);
   \draw[->] (75:0.9) to [bend left] node[below]{$P_1$} (105:0.9);
   \draw[->] (120:0.9) to [bend left] node[below]{$P_2$} (150:0.9);
   \draw[->] (165:0.9) to [bend left] node[right]{$P_1$} (195:0.9);
   \draw[->] (210:0.9) to [bend left] node[above]{$P_2$} (240:0.9);
  \end{tikzpicture}
\end{center}
\end{prop}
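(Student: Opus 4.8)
The plan is to identify the Schreier coset graph of $\Gamma(Y_h)$ in $\Gamma(X)$ with the \emph{orbit graph} of $[h]$ under $\Gamma(X)$, and then to show that the group acting on this orbit is a finite dihedral group of order $2n$. Recall from \cref{VGasStabiliser} that $\Gamma(Y_h)$ is precisely the stabilizer of $[h]$ for the action of $\Gamma(X)$ on $\Cov_{\ZZ/2\ZZ}$. Hence the cosets $g\,\Gamma(Y_h)$ are in bijection with the orbit $O = \Gamma(X)\cdot[h]$ via $g\,\Gamma(Y_h) \mapsto g\cdot[h]$, and an edge labelled $s\in\{P_1,P_2\}$ joins $g\cdot[h]$ to $s\cdot(g\cdot[h])$. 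By \cref{rem:Id_lifts}, $-\Id$ fixes every element of $\Cov_{\ZZ/2\ZZ}$, so it acts trivially and may be suppressed as claimed; the graph is thus governed entirely by the images of $P_1$ and $P_2$ in $\Sym(O)$.

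First I would record two structural facts about this action. Every element $[h']\in O$ again corresponds to a degree-$2$ cover, so \cref{prop:powers_parabolic_lift} gives $P_1^2,P_2^2\in\Gamma(Y_{h'})$, i.e.\ $P_1$ and $P_2$ act on $O$ as involutions (possibly with fixed points). Consequently the image $\overline{G}$ of $\langle P_1,P_2\rangle$ in $\Sym(O)$ is generated by two involutions and is a quotient of the infinite dihedral group. Next, by \cref{lem:Veech_group_preserves_weak_periodicity} the orbit satisfies $O\subseteq W_n$, and by \cref{lem:fixed_points_H_n_degree_2} we have $W_n=\operatorname{Fix}(H^n)$; therefore $H^n$ acts trivially on all of $O$, so the image $\overline{H}$ of $H=-P_1P_2$ has order dividing $n$. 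On the other hand, since $h\in W_n^\ast$ the points $[h],H\cdot[h],\dots,H^{n-1}\cdot[h]$ are distinct, so $\overline{H}$ restricts to an $n$-cycle on this sub-orbit and has order at least $n$. Thus $\overline{H}$ has order exactly $n$, $O$ is finite, and $\overline{G}$ is the dihedral group of order $2n$ with rotation subgroup $\langle\overline{H}\rangle\cong\ZZ/n\ZZ$.

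It then remains to classify the transitive $\overline{G}$-set $O$. Writing $S=\operatorname{Stab}_{\overline{G}}([h])$, the $n$ distinct points $H^j\cdot[h]$ show that the rotation subgroup acts freely, so $S\cap\langle\overline{H}\rangle=\{1\}$; a subgroup of a dihedral group meeting the rotations trivially has order $1$ or $2$. Hence $|O|=2n/|S|$ equals either $2n$ or $n$, giving exactly the two claimed cases. When $|S|=1$ the action is regular and the orbit graph is the Cayley graph of $\overline{G}$ for the generators $P_1,P_2$; tracing the alternating products $1,P_1,P_2P_1,P_1P_2P_1,\dots$ shows this is a single $2n$-cycle whose edge labels alternate between $P_1$ and $P_2$, which is the Kranz type. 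When $|S|=2$, $S$ is generated by a reflection, so the base vertex carries a loop; since each vertex has exactly one $P_1$- and one $P_2$-incidence (counting loops) and $O$ is connected, the graph is a path on $n$ vertices with a loop at each of its two ends and with alternating labels, which is the Striezel type.

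The main obstacle I anticipate is pinning down $\overline{G}$ exactly, that is, combining the upper bound on the order of $\overline{H}$ (from $O\subseteq W_n=\operatorname{Fix}(H^n)$) with the lower bound (from the $n$-element sub-orbit guaranteed by $h\in W_n^\ast$); without the invariance statement \cref{lem:Veech_group_preserves_weak_periodicity} one could only conclude that $H^n$ fixes $[h]$, not the \emph{whole} orbit. The remaining verification, namely matching the two coset diagrams with the drawn pictures, including the precise placement of the loops (which depends mildly on the parity of $n$), is then a routine check using the dihedral relations.
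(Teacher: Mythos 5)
Your reduction to the orbit graph is sound and mechanically different from the paper's argument. The paper observes that all conjugates of $P_1^2$ and $P_2^2$, together with $-\Id$, lie in $\Gamma(Y_h)$, so the normal closure $N = \llangle P_1^2, P_2^2, -\Id \rrangle$ is contained in $\Gamma(Y_h)$; the Schreier coset graph of $N$ is then the Cayley graph of $\Gamma(X)/N$ (an infinite dihedral group), i.e.\ the bi-infinite line with alternating labels, and since this graph covers the Schreier graph of $\Gamma(Y_h)$, the latter can only be a path with end-loops (Striezel) or a cycle (Kranz), the vertex count being read off from $H^n \in \Gamma(Y_h)$, $H^m \notin \Gamma(Y_h)$ for $m<n$. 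You instead analyze the image $\overline{G}$ of $\langle P_1, P_2 \rangle$ in $\Sym(O)$ and classify the transitive $\overline{G}$-set by orbit--stabilizer. Your route needs two inputs the paper's proof does not use, namely \cref{lem:Veech_group_preserves_weak_periodicity} and \cref{lem:fixed_points_H_n_degree_2}, to conclude that $H^n$ fixes the \emph{whole} orbit and not just $[h]$ -- you correctly identified this as the crux -- and in exchange it avoids the graph-covering step.

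There is, however, a genuine gap at small $n$: from ``$\overline{G}$ is a quotient of the infinite dihedral group in which $\overline{H}=\overline{P_1}\,\overline{P_2}$ has order exactly $n$'' you conclude that $\overline{G}$ is dihedral of order $2n$. This inference requires both $\overline{P_1}$ and $\overline{P_2}$ to be \emph{nontrivial} involutions, which is automatic only for $n \geq 3$ (if one of them acted trivially, $\overline{H}$ would have order at most $2$). For $n \leq 2$ the claim is false and your dichotomy misfires. Concretely, in the unique orbit of $W_2^\ast$ (\cref{exa:W_n_ast}), $P_2$ fixes both points, so $\overline{G} \cong \ZZ/2\ZZ$ and the action is regular, i.e.\ $|S|=1$; your case analysis would then declare the graph to be of Kranz type (a $4$-cycle), whereas it is of Striezel type with two $P_2$-loops. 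Likewise for $n=1$ the orbit is a single point, $\overline{G}$ is trivial, and the count $|O| = 2n/|S|$ cannot be satisfied at all. The repair is cheap -- handle $n=1,2$ by direct inspection, or note explicitly that for $n \geq 3$ both generators act nontrivially so that $\overline{G}$ really is dihedral of order $2n$ -- but as written the classification step fails in exactly those cases.
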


\begin{proof}
	From \cref{prop:powers_parabolic_lift}, we have that $P_1^2$ and $P_2^2$
	lift to $\Gamma(Y_h)$. This implies that
	$P \cdot h$ is a fixed point of $P_1^2$ and $P_2^2$ for any $P \in \Gamma(X)$.
	In particular, any conjugates of $P_1^2$ and~$P_2^2$ are contained in
	$\Gamma(Y_h)$. Furthermore, $-\Id \in \Gamma(Y_h)$ by \cref{rem:Id_lifts}.
	Hence, the normal subgroup~$N \coloneqq \llangle P_1^2, P_2^2, -\Id
	\rrangle$ is contained in $\Gamma(Y_h)$.

	Because $N$ is normal, its Schreier coset graph is the Cayley graph of
	$\Gamma(X)/N$ and hence has as vertices all elements of the form $P_1^{\pm
	1} P_2^{\pm 1} P_1^{\pm 1} \cdots$ or $P_2^{\pm 1} P_1^{\pm 1} P_2^{\pm 1}
	\cdots$. Therefore, the Schreier coset graph of $N$ is the following:
\begin{center}
	\begin{tikzpicture}[scale=2]
		\clip (-0.6,-0.45) rectangle (6.6,0.45);
		\draw[->, densely dotted] (-0.9,0.1) to [bend left] (-0.1,0.1);
		\draw[->, densely dotted] (6.1,0.1) to [bend left] (6.9,0.1);
		\draw[->, densely dotted] (6.9,-0.1) to [bend left] (6.1,-0.1);
		\draw[->, densely dotted] (-0.1,-0.1) to [bend left] (-0.9,-0.1);

		\clip (-0.3,-0.45) rectangle (6.3,0.45);
		\draw[fill] (0,0) circle (1pt);
		\draw[fill] (1,0) circle (1pt);
		\draw[fill] (2,0) circle (1pt);
		\draw[fill] (3,0) circle (1pt);
		\draw[fill] (4,0) circle (1pt);
		\draw[fill] (5,0) circle (1pt);
		\draw[fill] (6,0) circle (1pt);

		\draw[->] (0.1,0.1) to [bend left] node[above]{$P_1$} (0.9,0.1);
		\draw[->] (1.1,0.1) to [bend left] node[above]{$P_2$} (1.9,0.1);
		\draw[->] (2.1,0.1) to [bend left] node[above]{$P_1$} (2.9,0.1);
		\draw[->] (3.1,0.1) to [bend left] node[above]{$P_2$} (3.9,0.1);
		\draw[->] (4.1,0.1) to [bend left] node[above]{$P_1$} (4.9,0.1);
		\draw[->] (5.1,0.1) to [bend left] node[above]{$P_2$} (5.9,0.1);

		\draw[->] (5.9,-0.1) to [bend left] node[below]{$P_2$} (5.1,-0.1);
		\draw[->] (4.9,-0.1) to [bend left] node[below]{$P_1$} (4.1,-0.1);
		\draw[->] (3.9,-0.1) to [bend left] node[below]{$P_2$} (3.1,-0.1);
		\draw[->] (2.9,-0.1) to [bend left] node[below]{$P_1$} (2.1,-0.1);
		\draw[->] (1.9,-0.1) to [bend left] node[below]{$P_2$} (1.1,-0.1);
		\draw[->] (0.9,-0.1) to [bend left] node[below]{$P_1$} (0.1,-0.1);
	\end{tikzpicture}
\end{center}

As $N \leq \Gamma(Y_h)$, the Schreier coset graph of $N$ is a cover of the
Schreier coset graph of~$\Gamma(Y_h)$, hence it can only 
be of Striezel type or of Kranz type.
The number of vertices is deduced from the fact that 
$H^n$ is contained in $\Gamma(Y_h)$ but not
$H^m$ for any $m < n$.
\end{proof}

Note that for $n$ odd, a graph of Striezel type contains one fixed point of
$P_1$ and one of $P_2$ whereas for $n$ even, it contains either two fixed points
of $P_1$ or two fixed points of $P_2$ (and no fixed point of the other parabolic
generator).

\begin{exa}[The set $W_n^\ast$ for small $n$] \label{exa:W_n_ast}
	\begin{enumerate}
		\item The set $W_1 = W_1^\ast$ contains exactly one element. That means
		that (except of the disconnected cover) there is only one cover of
		degree $2$ which has Veech group $\Gamma(X)$. It corresponds to $h =
		(\ldots, 1, 0, 1, h_0 = 0, 1,0,1,0, \ldots)$.

		Hence, the graph that makes up $W_1^\ast$ is
			\raisebox{-1.3em}{\begin{tikzpicture}[scale=1.8]
				\draw[fill] (0,0) circle (1pt) node[below=2pt]{$h$};

				\draw[->] (-0.1,-0.1) arc (340:20:0.3) node[left]{$P_2$};
				\draw[->] (0.1,-0.1) arc (-160:160:0.3) node[right]{$P_1$};
			\end{tikzpicture}}
		.

		\item The set $W_2^\ast$ consists of exactly one orbit which is
		\begin{center}
			\begin{tikzpicture}[scale=1.8]
				\draw[fill] (0,0) circle (1pt) node[below=7pt]{$h^{0,1}$};
				\draw[fill] (1,0) circle (1pt) node[below=7pt]{$h^{1,1}$};

				\draw[->] (-0.1,-0.1) arc (340:20:0.3) node[left]{$P_2$};

				\draw[->] (0.1,0.1) to [bend left] node[above]{$P_1$} (0.9,0.1);

				\draw[->] (1.1,-0.1) arc (-160:160:0.3) node[right]{$P_2$};

				\draw[->] (0.9,-0.1) to [bend left] node[above]{$P_1$} (0.1,-0.1);
			\end{tikzpicture}
		\end{center}
		where $h^{0,1}$ corresponds to the bi-infinite vector $(\ldots, 0, 0, 1, 1, h^{0,1}_0 = 0, 0,
		1, 1, 0, \ldots)$ and~$h^{1,1}$ corresponds to $(\ldots, 0, 1, 1, 0,
		h^{1,1}_0 = 0, 1, 1, 0, 0, \ldots)$.

		\item The set $W_3^\ast$ consists of two orbits which are
		\begin{center}
			\begin{tikzpicture}[scale=1.8]
				\draw[fill] (0,0) circle (1pt) node[below=9pt]{$h^{0,1,1}$};
				\draw[fill] (1,0) circle (1pt) node[below=9pt]{$h^{1,1,1}$};
				\draw[fill] (2,0) circle (1pt) node[below=9pt]{$h^{0,0,1}$};

				\draw[->] (-0.1,-0.1) arc (340:20:0.3) node[left]{$P_1$};

				\draw[->] (0.1,0.1) to [bend left] node[above]{$P_2$} (0.9,0.1);
				\draw[->] (1.1,0.1) to [bend left] node[above]{$P_1$} (1.9,0.1);

				\draw[->] (2.1,-0.1) arc (-160:160:0.3) node[right]{$P_2$};

				\draw[->] (1.9,-0.1) to [bend left] node[above]{$P_1$} (1.1,-0.1);
				\draw[->] (0.9,-0.1) to [bend left] node[above]{$P_2$} (0.1,-0.1);

				\begin{scope}[xshift=3.8cm]
					\draw[fill] (0,0) circle (1pt) node[below=9pt]{$h^{1,1,0}$};
					\draw[fill] (1,0) circle (1pt) node[below=9pt]{$h^{0,1,0}$};
					\draw[fill] (2,0) circle (1pt) node[below=9pt]{$h^{1,0,0}$};

					\draw[->] (-0.1,-0.1) arc (340:20:0.3) node[left]{$P_1$};

					\draw[->] (0.1,0.1) to [bend left] node[above]{$P_2$} (0.9,0.1);
					\draw[->] (1.1,0.1) to [bend left] node[above]{$P_1$} (1.9,0.1);

					\draw[->] (2.1,-0.1) arc (-160:160:0.3) node[right]{$P_2$};

					\draw[->] (1.9,-0.1) to [bend left] node[above]{$P_1$} (1.1,-0.1);
					\draw[->] (0.9,-0.1) to [bend left] node[above]{$P_2$} (0.1,-0.1);
				\end{scope}
			\end{tikzpicture}
		\end{center}
		where the superscript indicates the entries at the first, second, and third place.

		\item The set $W_4^\ast$ consists of three orbits which are
		\begin{center}
			\begin{tikzpicture}[scale=1.8]
				\draw[fill] (0,0) circle (1pt) node[below=12pt]{$h^{0,1,0,0}$};
				\draw[fill] (1,0) circle (1pt) node[below=12pt]{$h^{1,0,0,0}$};
				\draw[fill] (2,0) circle (1pt) node[below=12pt]{$h^{0,0,1,0}$};
				\draw[fill] (3,0) circle (1pt) node[below=12pt]{$h^{1,1,1,0}$};

				\draw[->] (-0.1,-0.1) arc (340:20:0.3) node[left]{$P_1$};

				\draw[->] (0.1,0.1) to [bend left] node[above]{$P_2$} (0.9,0.1);
				\draw[->] (1.1,0.1) to [bend left] node[above]{$P_1$} (1.9,0.1);
				\draw[->] (2.1,0.1) to [bend left] node[above]{$P_2$} (2.9,0.1);

				\draw[->] (3.1,-0.1) arc (-160:160:0.3) node[right]{$P_1$};

				\draw[->] (2.9,-0.1) to [bend left] node[above]{$P_2$} (2.1,-0.1);
				\draw[->] (1.9,-0.1) to [bend left] node[above]{$P_1$} (1.1,-0.1);
				\draw[->] (0.9,-0.1) to [bend left] node[above]{$P_2$} (0.1,-0.1);

				\begin{scope}[yshift=-1.2cm]
					\draw[fill] (0,0) circle (1pt) node[below=12pt]{$h^{0,0,0,1}$};
					\draw[fill] (1,0) circle (1pt) node[below=12pt]{$h^{1,1,1,1}$};
					\draw[fill] (2,0) circle (1pt) node[below=12pt]{$h^{0,0,1,1}$};
					\draw[fill] (3,0) circle (1pt) node[below=12pt]{$h^{0,1,1,1}$};

					\draw[->] (-0.1,-0.1) arc (340:20:0.3) node[left]{$P_2$};

					\draw[->] (0.1,0.1) to [bend left] node[above]{$P_1$} (0.9,0.1);
					\draw[->] (1.1,0.1) to [bend left] node[above]{$P_2$} (1.9,0.1);
					\draw[->] (2.1,0.1) to [bend left] node[above]{$P_1$} (2.9,0.1);

					\draw[->] (3.1,-0.1) arc (-160:160:0.3) node[right]{$P_2$};

					\draw[->] (2.9,-0.1) to [bend left] node[above]{$P_1$} (2.1,-0.1);
					\draw[->] (1.9,-0.1) to [bend left] node[above]{$P_2$} (1.1,-0.1);
					\draw[->] (0.9,-0.1) to [bend left] node[above]{$P_1$} (0.1,-0.1);
				\end{scope}

				\begin{scope}[yshift=-2.4cm]
					\draw[fill] (0,0) circle (1pt) node[below=12pt]{$h^{1,0,1,1}$};
					\draw[fill] (1,0) circle (1pt) node[below=12pt]{$h^{0,1,0,1}$};
					\draw[fill] (2,0) circle (1pt) node[below=12pt]{$h^{1,0,0,1}$};
					\draw[fill] (3,0) circle (1pt) node[below=12pt]{$h^{1,1,0,1}$};

					\draw[->] (-0.1,-0.1) arc (340:20:0.3) node[left]{$P_2$};

					\draw[->] (0.1,0.1) to [bend left] node[above]{$P_1$} (0.9,0.1);
					\draw[->] (1.1,0.1) to [bend left] node[above]{$P_2$} (1.9,0.1);
					\draw[->] (2.1,0.1) to [bend left] node[above]{$P_1$} (2.9,0.1);

					\draw[->] (3.1,-0.1) arc (-160:160:0.3) node[right]{$P_2$};

					\draw[->] (2.9,-0.1) to [bend left] node[above]{$P_1$} (2.1,-0.1);
					\draw[->] (1.9,-0.1) to [bend left] node[above]{$P_2$} (1.1,-0.1);
					\draw[->] (0.9,-0.1) to [bend left] node[above]{$P_1$} (0.1,-0.1);
				\end{scope}
			\end{tikzpicture}
		\end{center}
		where the superscript indicates the entries at the first, second, third, and fourth place.
	\end{enumerate}
\end{exa}

Let us determine the cardinality of $W_n^\ast$ for all $n\in
\mathbb{N}$. Observe first that an element of $W_n$ is fully described by the
entries $h_1, \ldots, h_n$. Furthermore, each such description defines an
element of~$W_n \cup \{ \underline{0} \}$. Hence, we have
\begin{equation*}
	|W_n^\ast| = 2^n - \sum\limits_{m\mid n, m \neq n} |W_m^\ast|  - 1 = \sum\limits_{m\mid n} \mu(m) \cdot 2^{\sfrac{n}{m}}
\end{equation*}
where $\mu$ is the Möbius function, see \cite[Theorem 1.5.5]{allouche_shallit_03}.

We will show now that graphs of Striezel type appear for any $n\in \mathbb{N}$.
For this, we use that the Schreier coset graph of $\Gamma(Y_h)$ in $\Gamma(X)$ is of Striezel type if and only if $h$ is a fixed point of a conjugate of~$P_1$ or a conjugate of $P_2$.

\begin{lem}[Number of graphs of Striezel type]\label{lem:number_Striezels} Let
	$n\in\mathbb{N}$. Then, in the set $W_n$, the number of fixed
	points~of
	\begin{enumerate}
		\item $P_1$ is $2^{\lfloor \sfrac{(n+1)}{2} \rfloor}-1$,
		\item $P_2$ is $2^{\lfloor \sfrac{n}{2} \rfloor +1}-1$,
		\item $P_1$ and $P_2$ is $1$.
	\end{enumerate}

	Therefore, the number of graphs of Striezel type is $3 \cdot
	2^{\sfrac{n}{2}-1} - 1$ for $n$ even and  $2^{\sfrac{(n+1)}{2}} -1$ for $n$~odd.
\end{lem}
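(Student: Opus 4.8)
The plan is to convert each fixed-point condition into a linear system over $\mathbb{F}_2 = \mathbb{Z}/2\mathbb{Z}$ in the coordinates $h_1, \dots, h_n$, which (together with $h_0 = 0$) freely parametrize $W_n \cup \{\underline{0}\}$, count its solutions, and subtract $1$ for $\underline{0}$. Specializing \cref{ComputeTheGenerators} to $G = \mathbb{Z}/2\mathbb{Z}$, where every doubled sum vanishes, gives $(P_1 \cdot h)_k = h_{-k}$ and $(P_2 \cdot h)_k = h_{-1} + h_{-k-1}$ for all $k \neq 0, -1$. Hence $h$ is a fixed point of $P_1$ if and only if $h_k = h_{-k}$ for all $k \geq 1$, and of $P_2$ if and only if $h_k = h_{-1} + h_{-k-1}$ for all $k \neq 0, -1$. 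Writing weak $n$-periodicity as $h_{-j} = h_{n-j} + h_n$ and observing that the equation at $k$ is equivalent to the one at $k + n$, in both cases it suffices to impose the equations for $1 \leq k \leq n-1$, the case $k = n$ being automatic.

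For $P_1$ this is the system $h_j + h_{n-j} = h_n$ ($1 \leq j \leq n-1$), organized by the involution $j \mapsto n-j$ on $\{1, \dots, n-1\}$. When $n$ is odd this involution is fixed-point-free, giving $(n-1)/2$ pairs each contributing one free coordinate, plus the free coordinate $h_n$, for $2^{(n+1)/2}$ solutions; when $n$ is even the self-paired index $j = n/2$ forces $h_n = 0$ while leaving $h_{n/2}$ free, yielding $2^{n/2}$ solutions. Subtracting $1$ gives $2^{\lfloor (n+1)/2 \rfloor} - 1$, proving (i). For $P_2$, substituting $h_{-k-1} = h_{n-k-1} + h_n$ and $h_{-1} = h_{n-1} + h_n$ turns the condition into $h_k + h_{n-k-1} = h_{n-1}$ ($1 \leq k \leq n-1$), governed by the involution $k \mapsto n-1-k$. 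Here $h_n$ never appears and stays free, the pair $\{0, n-1\}$ gives the vacuous equation $h_{n-1} = h_{n-1}$, and the remaining pairs behave as for $P_1$ (for $n$ odd the self-pair $k = (n-1)/2$ forces $h_{n-1} = 0$), producing $2^{\lfloor n/2 \rfloor + 1}$ solutions and hence $2^{\lfloor n/2 \rfloor + 1} - 1$ after removing $\underline{0}$, which is (ii). For (iii), a common fixed point of $P_1$ and $P_2$ is fixed by $\langle P_1, P_2, -\Id \rangle = \Gamma(X)$, so its $\Gamma(X)$-orbit is a single point; by \cref{exa:W_n_ast}(i) the only such vector is the unique element of $W_1 \subseteq W_n$, giving the count $1$.

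For the count of Striezel graphs I would pass from fixed points to loops in the Schreier coset graphs. The $\Gamma(X)$-orbits partition $W_n$, and by \cref{prop:possible_Schreier_coset_graphs} (applied to the minimal period of each orbit) every orbit is of Striezel or of Kranz type. A loop labelled $P_i$ at a vertex corresponds exactly to a $P_i$-fixed point of that orbit; a Striezel graph carries exactly two loops, one at each endpoint, and a Kranz graph none. Summing the number of loops over all orbits therefore equals both the total number of $P_1$- and $P_2$-fixed points counted in (i) and (ii), and twice the number of Striezel graphs. Hence the number of Striezel graphs is $\tfrac{1}{2}\bigl( 2^{\lfloor(n+1)/2\rfloor} + 2^{\lfloor n/2\rfloor+1} - 2 \bigr)$, which evaluates to $2^{(n+1)/2}-1$ for $n$ odd and $3 \cdot 2^{n/2-1}-1$ for $n$ even.

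The main obstacle is the linear-algebra bookkeeping in (i) and (ii): one must correctly identify which coordinates remain free once the pairing equations are imposed, handle the self-paired middle index separately in each parity case (it is $j = n/2$ for $P_1$ with $n$ even, forcing $h_n = 0$, and $k = (n-1)/2$ for $P_2$ with $n$ odd, forcing $h_{n-1} = 0$), and keep track of the coordinates lying outside the paired block (namely $h_n$, and additionally $h_{n-1}$ for $P_2$). The reduction from all $k$ to $1 \leq k \leq n-1$ via periodicity, together with the verification that the equation at $k = n$ (respectively the pair $\{0, n-1\}$) is automatically satisfied, also has to be checked to avoid miscounting the dimension of the solution space.
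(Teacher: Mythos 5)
Your proposal is correct and follows essentially the same route as the paper's proof: specialize the formulas of \cref{ComputeTheGenerators} to $\mathbb{Z}/2\mathbb{Z}$, use weak $n$--periodicity to turn each fixed-point condition into a linear system in the free coordinates $h_1,\dots,h_n$, count solutions by parity of $n$, and obtain the number of Striezel graphs as half the total number of loops $\bigl(2^{\lfloor\sfrac{(n+1)}{2}\rfloor}+2^{\lfloor\sfrac{n}{2}\rfloor+1}-2\bigr)/2$. Your bookkeeping via the involutions $j\mapsto n-j$ and $k\mapsto n-1-k$, and your explicit justification that the equations at $k=n$ (and at the pair $\{0,n-1\}$) are automatic, only make more explicit the free-coordinate count that the paper states directly, so the two arguments coincide in substance.
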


\begin{proof}
	Recall that each choice of entries $h_1, \ldots, h_n$, which are not all zeros, defines an element of $W_n$.

	\begin{enumerate}
	\item For a fixed point $h$ of $P_1$, we have for every $k \in \mathbb{Z}$,
	that $h_k = (P_1 h)_k = h_{-k}$. Therefore, we have the additional
	restrictions $h_k = h_{-k} = h_{n-k}+h_n$.

	For $n$ even, we obtain $h_n = h_{\sfrac{n}{2}} - h_{n-\sfrac{n}{2}} = 0$.
	This condition, together with the elements $h_1, \ldots, h_{\sfrac{n}{2}}$,
	describes $h$ completely. As not all of them can be $0$, we have $2^{\sfrac{n}{2}}-1$ choices for
	a fixed point.

	For $n$ odd, the elements $h_1, \ldots, h_{\sfrac{(n-1)}{2}}$, and $h_n$ describe $h$
	completely. Therefore, we have $2^{\sfrac{(n+1)}{2}} -1$ choices for a fixed
	point.

	\item For a fixed point $h$ of $P_2$, for every $k \in \mathbb{Z}  \setminus
	\{-1\}$, we get $h_k = (P_2(h))_k = h_{-1} + h_{-k-1} = h_{n-1} +
	h_{n-k-1}$.

	That is, for $n$ even, the elements $h_{\sfrac{n}{2}}, \ldots, h_n$ describe
	$h$ completely. Therefore, we have $2^{\sfrac{n}{2} + 1} -1$ choices for a
	fixed point.

	For $n$ odd, $n \geq 3$, we obtain with $k= \frac{n-1}{2}$ that $h_{n-1} =
	0$ (and, therefore, $h_k = h_{n-k-1}$). This condition, together with the
	elements $h_{\sfrac{(n-1)}{2}}, \ldots, h_{n-2}, h_n$, describes $h$
	completely and we have $2^{\sfrac{(n+1)}{2}} -1$ choices for a fixed point.

	\item If $P_1$ and $P_2$ both are fixing $h$, then $h= (\ldots, 0, 1, h_0 = 0, 1, 0, \ldots)$.
	\end{enumerate}

	In summary, we have that the number of Schreier coset graphs of Striezel
	type for elements in~$W_n$ is $(2^{\lfloor \sfrac{(n+1)}{2} \rfloor} +
	2^{\lfloor \sfrac{n}{2} \rfloor +1} -2) /2$ which is $3 \cdot
	2^{\sfrac{n}{2}-1} -1$ for $n$ even and  $2^{\sfrac{(n+1)}{2}} -1$ for $n$ odd.
\end{proof}

If $n$ is prime and $n\neq 2$, we can give the number of graphs of Striezel and
Kranz type in $W_n^\ast$ in closed form. The number of graphs of Striezel type in $W_n$ is $2^{\sfrac{(n+1)}{2}} -1$ of which one is contained in~$W_1$.
Hence, the number of graphs of Striezel
type in $W_n^\ast$ is $2^{\sfrac{(n+1)}{2}} - 2$. As graphs of Striezel type
have $n$ elements, $n \cdot (2^{\sfrac{(n+1)}{2}} - 2)$ of the~$2^n -2$ elements
of $W_n^\ast$ are contained in graphs of this type. As graphs of Kranz type
have $2n$ elements, we can calculate the number of graphs of Kranz type~as
\begin{equation*}
	\frac{(2^n - 2) - n\cdot (2^{\sfrac{(n+1)}{2}} - 2)}{2n}
	= \frac{2^{n-1} - 1}{n} - 2^{\sfrac{(n-1)}{2}} +1
	.
\end{equation*}

For general $n$, \cref{lem:number_Striezels} yields a lower bound of $2^{\sfrac{n+1}{2}} -1$ on the number of graphs of Striezel type in $W_n$.
To obtain a (very rough) lower bound on the number of graphs of Striezel type in~$W_n^\ast$, we subtract $|W_{\lfloor \sfrac{n}{2} \rfloor}|$, a bound on the number of vectors that could potentially be in graphs of Striezel type in $W_n \setminus W_n^\ast$. This yields $2^{\sfrac{n+1}{2}} -1 - (2^{\lfloor \sfrac{n}{2} \rfloor} -1) > 0$, hence graphs of Striezel type exist for every $n\in \mathbb{N}$.

For the number of graphs of Kranz type in $W_n^\ast$, note that
\begin{equation*}
	|W_n^\ast| \geq 2^n - \sum\limits_{m\mid n, m \neq n} 2^m \geq 2^n - \sum\limits_{m \leq \sfrac{n}{2}} 2^m \geq 2^n - 2^{\sfrac{n}{2} +1} 
	.
\end{equation*}
Therefore, with the upper bound of $3 \cdot 2^{\sfrac{n}{2} -1}$ for graphs of Striezel type in $W_n^\ast$ from \cref{lem:number_Striezels},
we obtain a lower bound for the number of graphs of Kranz type in $W_n^\ast$ by
\begin{equation*}
	\frac{|W_n^\ast| - n \cdot 3 \cdot 2^{\sfrac{n}{2} -1} }{2n} \geq \frac{2^n - (3n+4) 2^{\sfrac{n}{2}-1}}{2n},
\end{equation*}
which is positive for $n\geq 8$. Checking the small cases separately as in
\cref{exa:W_n_ast}, we see that we have graphs of Kranz type if and only if $n \geq 6$.

\medskip

For the next proposition, we recall that the graphs of Striezel and of Kranz
type have by definition a loop at any vertex labelled by $-\Id$ which is omitted
in the graphs in \cref{prop:possible_Schreier_coset_graphs}.

\begin{prop}[All possible Schreier coset graphs] \label{prop:all_Schreier_coset_graphs}
  A graph with \(n\) vertices is the Schreier coset graph of the Veech group of
  a normal covering \(p\colon Y_h \to X\) of degree~$2$ with respect to the generating
  system \(P_1, P_2,-\Id\) if and only if it is of Striezel type or of Kranz
  type and it is neither of Striezel type with two vertices whose two loops are
  labelled by~\(P_1\) nor of Kranz type with less than six vertices.
\end{prop}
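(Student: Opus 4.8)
The plan is to prove the biconditional by first invoking the dichotomy already established and then pinning down exactly which abstract Striezel and Kranz graphs are realized. By \cref{prop:possible_Schreier_coset_graphs}, the Schreier coset graph of $\Gamma(Y_h)$ for any degree-$2$ cover with $h \in W_n^\ast$ is of Striezel type (with $n$ vertices) or of Kranz type (with $2n$ vertices); this gives the shape constraint in the ``only if'' direction at once. The real content is to decide which of these abstract graphs actually occur. I would first record that an abstract graph of either type is rigid: a Striezel graph is a chain whose edges alternate between $P_1$ and $P_2$, so once its length $n$ and one endpoint loop are fixed the whole labelling is forced, and parity forces one $P_1$- and one $P_2$-loop when $n$ is odd but two loops of the same label when $n$ is even; a Kranz graph with $2n$ vertices is the unique alternating wreath of that size (the $-\Id$-loops at every vertex being suppressed throughout). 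Thus realizability reduces to two questions: for each $n$ and each admissible loop-type, is there an $h\in W_n^\ast$ producing that Striezel graph, and for which $n$ does $W_n^\ast$ contain a Kranz orbit.

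For the Striezel side I would use the exact counts in \cref{lem:number_Striezels}. A vertex of a Striezel graph carries a $P_1$-loop (resp. $P_2$-loop) precisely when the corresponding $h$ is a fixed point of a conjugate of $P_1$ (resp. $P_2$), so the loop-labels are governed by the numbers of $P_1$- and $P_2$-fixed points in $W_n$, namely $2^{\lfloor (n+1)/2\rfloor}-1$ and $2^{\lfloor n/2\rfloor+1}-1$, together with a single common fixed point. Passing from $W_n$ to $W_n^\ast$ by Möbius subtraction over the proper divisors, I would check that for every odd $n$ the (unique, mixed-loop) Striezel type occurs, that the both-$P_2$ type occurs for every even $n$, and that the both-$P_1$ type occurs for every even $n\ge 4$. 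The case $n=2$ is the exception: the only $P_1$-fixed point of $W_2$ is the common fixed point, which already lies in $W_1$, so $W_2^\ast$ contains no $P_1$-fixed point and the two-vertex Striezel graph with two $P_1$-loops is never realized. This is exactly the first excluded case.

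For the Kranz side I would argue that a Kranz graph occurs if and only if $W_n^\ast$ is not exhausted by its Striezel orbits. Here I would compare $|W_n^\ast|=\sum_{m\mid n}\mu(m)2^{n/m}$ against the number of vectors lying in Striezel orbits, which is at most $n$ times the Striezel count bounded in \cref{lem:number_Striezels}. The resulting lower bound $\tfrac{2^n-(3n+4)2^{n/2-1}}{2n}$ for the number of Kranz orbits is positive once $2^{n/2+1}>3n+4$, i.e. for $n\ge 8$; the remaining values $n=6,7$ I would settle by the explicit orbit bookkeeping, and for $n\le 5$ I would verify, as in \cref{exa:W_n_ast}, that the Striezel orbits already account for all of $W_n^\ast$, so that no Kranz orbit exists. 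This shows that Kranz graphs occur in exactly the range of vertex counts stated in the proposition, which simultaneously supplies the realizability of the admissible Kranz graphs and the exclusion of the too-small ones.

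The main obstacle is the Kranz analysis at the boundary. The clean inequality only becomes positive at $n=8$, so the existence of Kranz orbits for $n=6$ and $n=7$ cannot be read off from it and must be confirmed by hand, while the non-existence for small $n$ has to be certified by matching the Striezel count exactly against $|W_n^\ast|$ (as the examples in \cref{exa:W_n_ast} illustrate). A secondary subtlety, which I would not want to gloss over, is that \cref{lem:number_Striezels} counts fixed points in $W_n$ rather than in $W_n^\ast$; establishing that a Striezel graph of a prescribed loop-type occurs with minimal period exactly $n$ requires the Möbius passage to $W_n^\ast$, and it is precisely this passage that isolates the $n=2$ double-$P_1$ exclusion from the otherwise uniform even-$n$ behaviour.
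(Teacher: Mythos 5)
Your proposal is correct and takes essentially the same route as the paper's proof: the dichotomy from \cref{prop:possible_Schreier_coset_graphs}, the fixed-point counts of \cref{lem:number_Striezels} transferred from $W_n$ to $W_n^\ast$ to isolate the two-vertex double-$P_1$ exclusion, and, on the Kranz side, the counting bound (positive for $n\geq 8$) combined with explicit checks for $n=6,7$ and the exhaustion of $W_n^\ast$ by Striezel orbits for $n\leq 5$ --- this is exactly the paper's argument, which merely tabulates the counts $B_1(n)$, $B_2(n)$ for $n\leq 5$ and invokes the discussion preceding the proposition for $n\geq 6$. One caveat, inherited from the paper itself rather than introduced by you: since a Kranz graph arising from $W_n^\ast$ has $2n$ vertices, what both arguments actually establish is that no Kranz graph with fewer than \emph{twelve} vertices is realized (the $6$-, $8$-, and $10$-vertex Kranz graphs corresponding to $n=3,4,5$ do not occur), so your closing assertion that this matches ``the range of vertex counts stated in the proposition'' glosses over the same $n$-versus-$2n$ conflation that makes the proposition's stated bound of six vertices too weak as written.
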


\begin{proof}
	For \(n \in \{1,2,3,4,5\}\), we obtain from \cref{lem:number_Striezels} the
	following table for the number~$A_1(n)$ and~\(A_2(n)\) of fixed points of
	\(P_1\) and \(P_2\) in \(W_n\) and the  number \(B_1(n)\) and \(B_2(n)\) of
	fixed points of \(P_1\) and \(P_2\) in \(W_n^\ast\):
	\[
	\begin{array}{lccccc}
		\toprule
		n      &1 &2 &3 &4 &5\\
		\midrule
		A_1(n) & 1&1 &3 &3 & 7\\
		B_1(n) & 1&0 &2 &2 & 6\\
		|W_n^\ast| & 1 & 2 & 6 & 12 & 30\\
		\bottomrule
	\end{array} \hspace*{10mm}
		\begin{array}{lccccc}
		\toprule
		n      &1 &2 &3 &4 &5\\
		\midrule
		A_2(n) & 1&3 &3 &7 & 7\\
		B_2(n) & 1&2 &2 &4 & 6\\
		|W_n^\ast| & 1 & 2 & 6 & 12 & 30\\
		\bottomrule
	\end{array}
	\]
	It follows from the table that, for \(n=2\), we have no graph of Striezel type with
	\(P_1\)--loops and only one graph of Striezel type with \(P_2\)--loops which
	consists of two vertices.  Since \(|W_2^\ast| = 2\), this gives us the full
	set~\(W_2^\ast\). The statement follows similarly from the table for \(n \in
	\{3,4,5\}\). For \(n \geq 6\), we get from \cref{lem:number_Striezels} and the previous discussion that
	all three types of graphs occur.
\end{proof}

We conclude this section with the proof of \cref{thm:realization_F_n}. For this, we reformulate the previous discussion as a statement about the rank of the projective Veech
groups that appear for covers. As the projective Veech group $P\Gamma(X)$ is a
free group, every subgroup $P\Gamma(Y)$ is also free by Schreier's theorem. The
rank of a subgroup of index $n$ in a free group of rank $k$ is $n(k-1)+1$.
Together with the existence of Schreier coset graphs of Striezel type in each
$W_n^\ast$, this shows immediately:

\begin{cor}[Every free group is a projective Veech group of a cover] \label{cor:every_free_group_is_projective_Veech_group}
	For every $n \geq 2$, there exists a cover $Y_h$ of $X$ such that the
	projective Veech group of $Y_h$ is a free group of rank $n$.
\end{cor}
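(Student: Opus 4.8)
The plan is to realize $F_n$ as a projective Veech group by exhibiting, for each $n \geq 2$, a degree-$2$ cover $Y_h$ whose projective Veech group sits as a subgroup of index $n-1$ in $P\Gamma(X)$, and then reading off the rank from the standard index--rank formula for free groups. The whole argument is an assembly of facts already in place: the freeness of $P\Gamma(X)$, the containment $\Gamma(Y_h)\subseteq\Gamma(X)$, the description of Schreier coset graphs, and the guaranteed existence of Striezel-type graphs in every $W_m^\ast$.

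First I would recall from \cref{sec:chamanara} that $P\Gamma(X)$ is free of rank $2$, freely generated by the images of $P_1$ and $P_2$. For any degree-$2$ cover $Y_h$, the everything-descends property (\cref{prop:everything_descends}) gives $\Gamma(Y_h)\subseteq\Gamma(X)$, and since $-\Id$ lifts to every abelian cover (\cref{rem:Id_lifts}), passing to the quotient by $\{\pm\Id\}$ realizes $P\Gamma(Y_h)$ as a genuine subgroup of $P\Gamma(X)$. By Schreier's theorem this subgroup is free, so its isomorphism type is determined solely by its rank, which I would compute through the index. Because $-\Id$ lies in both $\Gamma(X)$ and every $\Gamma(Y_h)$, the two indices agree, $[P\Gamma(X):P\Gamma(Y_h)]=[\Gamma(X):\Gamma(Y_h)]$, and the latter equals the number of vertices of the Schreier coset graph of $\Gamma(Y_h)$ with respect to the generating system $P_1,P_2,-\Id$. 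For $h\in W_m^\ast$ whose graph is of Striezel type, \cref{prop:possible_Schreier_coset_graphs} says this graph has exactly $m$ vertices, so the index is $m$, and the rank formula for an index-$m$ subgroup of a free group of rank $2$ gives that $P\Gamma(Y_h)$ is free of rank $m(2-1)+1=m+1$.

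Finally, to realize $F_n$ for a prescribed $n\geq 2$, I would set $m=n-1\geq 1$ and select an $h\in W_m^\ast$ whose Schreier coset graph is of Striezel type; such an $h$ exists for every $m\in\mathbb{N}$ by the lower bound on the number of Striezel-type graphs established just above (and directly from \cref{exa:W_n_ast} in the base case $m=1$, where $W_1^\ast$ yields the cover with $P\Gamma(Y_h)=P\Gamma(X)$ of rank $2$). Then $P\Gamma(Y_h)$ is free of rank $m+1=n$, as desired. The only substantive ingredient is the \emph{existence} of a Striezel-type cover in each $W_m^\ast$; once that is granted, the corollary reduces to bookkeeping with the index--rank formula. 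Hence the genuine obstacle is not in this corollary at all but in the preceding count of Striezel-type graphs, which I would therefore treat as the crux and simply cite here.
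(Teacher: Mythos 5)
Your proposal is correct and follows essentially the same route as the paper: both arguments combine Schreier's theorem, the index--rank formula for subgroups of the free group $P\Gamma(X)$ of rank $2$, and the existence of Striezel-type Schreier coset graphs in each $W_m^\ast$ (taking $m=n-1$). Your write-up merely makes explicit the bookkeeping the paper leaves implicit, namely that $-\Id\in\Gamma(Y_h)$ ensures $[P\Gamma(X):P\Gamma(Y_h)]=[\Gamma(X):\Gamma(Y_h)]$ equals the number of vertices of the Striezel graph.
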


Note that there exist also translation surfaces of finite area and infinite type whose Veech group is isomorphic to $\mathbb{Z}$, see for example the exponential surface in \cite[Proposition 2.4]{randecker_16}.
Together with this observation, \cref{cor:every_free_group_is_projective_Veech_group} proves \cref{thm:realization_F_n}.

\section{Devious finite abelian covers of the Chamanara surface}\label{sec:devious}

In this section, we prove the \hyperlink{cor:devious}{Corollary to Theorem 3}. We begin by recalling the relevant
terminology from~\cite{hooper_trevino_19}.

Two translation surfaces $X$ and $X'$ are \emph{translation equivalent} if there
exists a translation $h \colon X \to X'$. Let $\SL_\pm(2, \RR)$ be the subgroup of
$\GL(2,\RR)$ of matrices with determinant equal to~$\pm 1$, which acts by
post-composition on translation surfaces. Then, we define the
$\SL_\pm(2,\RR)$--orbit of the surface $X$ as
\[
\mathcal{O}(X) = \{A\cdot X: A\in\SL_\pm (2,\RR)\} / \text{translation equivalence}.
\]
Let $\Gamma_\pm (X)$ be the \emph{extended} Veech group of $X$, which is the
stabilizer of $X$ under the action of~$\SL_\pm(2,\RR)$. We remark that two
surfaces $A_1\cdot X$ and $A_2\cdot X$ in the orbit of $X$ are translation
equivalent if and only if $A_1$ and $A_2$ are in the same left coset of
$\Gamma_\pm (X)$. Hence, we can identify~$\mathcal{O}(X)$ with $\SL_\pm(2,\RR)/
\Gamma_\pm (X)$. With this identification, the orbit space $\mathcal{O}(X)$
inherits a topology from the usual topology of $\SL_\pm(2,\RR)$.

The \emph{Teichmüller orbit} of $X$ is the subset
\[
\{g_t \cdot X: t\in\RR\} / \text{translation equivalence}
\subseteq \mathcal{O}(X),
\] 
where $g_t = \left(\begin{smallmatrix}
	e^t & 0 \\
	0	& e^{-t} \end{smallmatrix}\right)$ for $t \in \mathbb{R}$. We say
that the Teichmüller orbit is \emph{non-divergent} if there is a sequence
$(t_n)$ with $t_n \to \infty$ as $n \to \infty$ such that $g_{t_n}\cdot X$
converges in~$\mathcal{O}(X)$ as $n\to\infty$.

Let $G$ be a finite group and define, similar to \cref{sec:monodromy}, the space
$\Cov^\textnormal{HT}_G(X)$ of (finite) covers with deck group $G$, up to
isomorphisms of the surface $X$. Note that we allow here non-normal and disconnected covers. Then, one can build the space formed by the
union of $\SL_\pm(2,\RR)$--orbits of covers of $X$ with deck group $G$:
\[
\widetilde{\mathcal{O}}_G(X) = \{A\cdot Y: A\in\SL_\pm (2,\RR), Y\in\Cov^\textnormal{HT}_G (X)\} / \text{translation equivalence}.
\]
We can extend the action of the group $\{g_t | t\in\RR \}$ to
$\widetilde{\mathcal{O}}_G(X)$ and endow $\widetilde{\mathcal{O}}_G(X)$ with the
quotient topology, induced by the one of $\SL_\pm(2,\RR)\times\Cov^\textnormal{HT}_G(X)$
where $\Cov^\textnormal{HT}_G(X)$ is seen as a subquotient of $\Hom(\pi_1(X),G)$.

Generically, connected covers accumulate under the Teichmüller action on
$\widetilde{\mathcal{O}}_G(X)$ to connected covers. More precisely:

\begin{prop}[Accumulation of Teichmüller orbit {\cite[Theorem 5]{hooper_trevino_19}}] Let $X$ be an infinite
	translation surface of finite area and assume that its
	Teichmüller orbit is non-divergent in $\mathcal{O}(X)$. Given an integer
	$n\ge 2$, consider a subgroup $G$ of $S_n$, which acts transitively on
	$\{1,\dotsc, n\}$. Then, almost every element of $\Cov^\textnormal{HT}_G(X)$ has a
	Teichmüller orbit which accumulates on an element of $\Cov^\textnormal{HT}_G(X)$ which
	is connected.
\end{prop}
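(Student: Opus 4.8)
The plan is to present the statement as a genericity result for the Teichm\"uller renormalization acting on the space of monodromies, finished off by a tail/zero--one-law argument. Fix a free generating set $\{\gamma_i\}_{i\in\NN}$ of $\pi_1(X)$, chosen so that the generators of large index are represented by loops accumulating near the singularities of $X$, i.e.\ in the non-compact part of the surface. Then $\Hom(\pi_1(X),G)\cong G^{\NN}$, which I equip with the Bernoulli measure $\mu=\bigotimes_i\mathrm{unif}_G$; the identifications defining the subquotient $\Cov^\textnormal{HT}_G(X)$ (conjugation in $G$ together with relabelling of sheets) are induced by finite-group actions and hence preserve $\mu$-null and $\mu$-conull sets, so ``almost every'' refers unambiguously to $\mu$. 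The one structural fact to record first is the connectedness criterion: the degree-$n$ cover $Y_\phi$ associated to $\phi\colon\pi_1(X)\to G\le\Sym(n)$ is connected precisely when the image $\phi(\pi_1(X))$ acts transitively on $\{1,\dots,n\}$, and the transitivity of $G$ is exactly what guarantees that connected covers occur at all.

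Next I would set up the renormalization. The forgetful map $\widetilde{\mathcal{O}}_G(X)\to\mathcal{O}(X)$ is a bundle with fibre $\Cov^\textnormal{HT}_G(X)$, and the Teichm\"uller flow $g_t$ acts compatibly on base and fibre. The non-divergence hypothesis yields a sequence $t_k\to\infty$ and a surface $X_\infty$ with $g_{t_k}X\to X_\infty$ in $\mathcal{O}(X)$; convergence means there are almost-isometric comparison maps $\psi_k$ matching larger and larger compact pieces of $g_{t_k}X$ with $X_\infty$. Pulling the cover $g_{t_k}Y_\phi$ back through $\psi_k$ produces monodromies $\phi_k\colon\pi_1(X_\infty)\to G$, and by compactness of $\Hom(\pi_1(X_\infty),G)$ we may pass to a subsequence with $\phi_k\to\phi_\infty$. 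The cover $Y_{\phi_\infty}$ of $X_\infty$ is then an accumulation point of the orbit of $Y_\phi$ in $\widetilde{\mathcal{O}}_G(X)$. The point of the theorem is that connectedness of $Y_\phi$ need not survive this limit, since the loops witnessing transitivity can be stretched to infinite length and disappear from the comparison; so the whole problem reduces to showing that, for $\mu$-almost every $\phi$, the limit monodromy $\phi_\infty$ nonetheless has transitive image.

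The heart of the argument is that $\phi_\infty$ is a tail function of $\phi$. Because $g_{t_k}$ expands the contracting direction, the short loops of $g_{t_k}X$ that drive the comparison correspond under pull-back to long loops of $X$ lying deep in the surface, homotopic to words in generators $\gamma_i$ of index tending to infinity with $k$. Hence the value of $\phi_\infty$ on any fixed loop of $X_\infty$ depends only on the restriction of $\phi$ to $\{\gamma_i:i\ge N_k\}$ with $N_k\to\infty$, so the event
\[
	E=\{\phi:\phi_\infty(\pi_1(X_\infty))\text{ acts transitively on }\{1,\dots,n\}\}
\]
lies in the tail $\sigma$-algebra $\bigcap_N\sigma(\gamma_i:i\ge N)$. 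Under $\mu$ the coordinates are i.i.d., so Kolmogorov's zero--one law gives $\mu(E)\in\{0,1\}$. To exclude $\mu(E)=0$ it suffices to produce positive probability: the renormalization reads off infinitely many independent uniform $G$-values, and since $G$ acts transitively, such a family generates a transitive subgroup with positive probability. Therefore $\mu(E)=1$, and almost every $Y_\phi$ accumulates on the connected cover $Y_{\phi_\infty}$.

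I expect the decisive obstacle to be the geometric content of the renormalization step: namely, proving that convergence $g_{t_k}X\to X_\infty$ in $\mathcal{O}(X)$ forces the accumulating monodromy to be read off from genuinely deep --- and therefore, under the product measure, independent --- families of loops. This is what makes $E$ a tail event, and it requires quantitative control (furnished by non-divergence) of the correspondence between short saddle connections of $g_{t_k}X$ and long loops of $X$, as well as of the action of the comparison maps $\psi_k$ on $\pi_1$. Once this independence is established, the zero--one law together with the positive-probability step completes the proof routinely.
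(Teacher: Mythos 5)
First, a point of order: the paper does not prove this proposition at all --- it is imported verbatim as Theorem~5 of \cite{hooper_trevino_19} (note the citation in the proposition's title) and serves only to motivate the definition of devious covers in \cref{sec:devious}. So there is no internal proof to compare your argument against; what can be assessed is whether your sketch would stand as a proof of Hooper--Treviño's theorem. It would not, and the gap is precisely at the step you yourself flag as the ``decisive obstacle'': the claim that the limit monodromy $\phi_\infty$ is a tail function of $\phi$, so that Kolmogorov's zero--one law applies.

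This tail-measurability claim is not just unproven; the present paper's own computations give concrete reason to doubt it. By \cref{prop:action_H_n}, the renormalized defining vector of a Chamanara cover satisfies $(H^n\cdot h)_k = h_{k-n} + c_n$ with $c_n = \sum_{j=1}^n 2^{n-j}h_{-j}$, so the value at place $k$ after $n$ steps of renormalization depends not only on the deep entry $h_{k-n}$ but on \emph{all} head entries $h_{-1},\dotsc,h_{-n}$. For $G=\ZZ/2\ZZ$ this dependence collapses ($c_n = h_{-n}$), but already for $G=\ZZ/3\ZZ$ one has $2^{n-j}\equiv(-1)^{n-j}\pmod 3$, so changing $h_{-1}$ changes every renormalized value along a subsequence: the limit monodromy is genuinely \emph{not} a function of the tail coordinates alone. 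One would instead have to show that the \emph{event} (transitivity of the image of $\phi_\infty$) is tail-measurable modulo null sets despite this head dependence, which is a substantive assertion requiring its own argument. Your closing step is also structurally circular: if the renormalization really did read off infinitely many i.i.d.\ uniform $G$-values, then by Borel--Cantelli every element of $G$ would occur among them almost surely, so the image of $\phi_\infty$ would be all of $G$ (hence transitive) almost surely, and you would need neither the zero--one law nor a positive-probability argument; the zero--one law is doing work in your sketch only because the independence has not been established, while the positive-probability claim silently assumes that same independence. Finally, note that the naive repair --- connected covers have full measure, the renormalization maps preserve the Bernoulli measure, hence all renormalized covers are a.s.\ connected --- also fails, because connectedness is not a closed condition in $G^{\ZZ}$; that failure is exactly why devious covers can exist, and controlling what survives in the limit is the real content of Hooper--Treviño's proof.
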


Here, $S_n$ is the permutation group of $\{1, \dotsc, n\}$ and almost every
means with respect to a natural Borel probability measure on $\Cov^\textnormal{HT}_G(X)$, which
is induced by the counting measure on~$G$,
see~\cite[Section~4.2]{hooper_trevino_19}.

The above result motivates the following terminology. Let $X$ be an infinite translation
surface of finite area, which has non-divergent
Teichmüller orbit in $\mathcal{O}(X)$. Fix an integer $n\ge 2$, and let~$G$ be
any subgroup of $S_n$ which acts transitively on $\{1,\dotsc,n\}$. We call a
connected cover~$Y$ of~$X$ with deck group $G$ \emph{devious} if its Teichmüller orbit in
$\widetilde{\mathcal{O}}_G(X)$ only accumulates on disconnected covers.

\begin{rem}[Translation equivalence vs.\ topological isomorphisms]
	The definitions we just recalled from~\cite{hooper_trevino_19} are with
	respect to translation equivalence. Elsewhere in this article, we have used
	the coarser notion of topological isomorphism. In general, $\TransCov_G(X)$,
	the space of covers up to translation equivalence of a given translation
	surface $X$, with finite deck group~$G$, is a quotient of
	$\Cov^\textnormal{HT}_G(X)$. However, in the case of the Chamanara surface, the two spaces
	are actually isomorphic, see \cref{lem:aut=deck}.
\end{rem}

Having recalled the terminology needed, we now obtain the \hyperlink{cor:devious}{corollary}. 

\begin{proof}[Proof of the \hyperlink{cor:devious}{Corollary to Theorem 3}]
	Since $H = g_{\log{2}} \in \Gamma(X)$, the Chamanara surface $X$ has
	non-divergent Teichmüller orbit inside $\mathcal{O}(X)$.
	Assume that $Y$ is a (connected normal) cover of~$X$ with abelian deck group whose Veech group has finite index in $\Gamma(X)$.
	Then by \cref{thm:characterization_finite_index}, a power of $H$ is contained in $\Gamma(Y)$ and hence, the Teichmüller orbit of $Y$ is periodic. In particular, the orbit is not divergent and, by construction, does not accumulate on disconnected covers.
\end{proof}

\section{Topology of the covers} \label{sec:topology}

We want to conclude this article with an investigation of the topology of the covers that we studied.
Because of the description of the Chamanara surface in \cref{sec:chamanara}, we can describe a cover of the Chamanara surface of degree $d$ by~$d$ copies of a square where open segments on the boundaries of the squares are identified and the remaining points on the boundary are identified to one or several singularities. From this, we can read that not only the Chamanara surface has infinite genus, also any finite cover of it has infinite genus. Furthermore, as for the Chamanara surface, for any finite cover, its metric completion is compact. Hence the ends correspond one-to-one to the singularities (see~\cite[Proposition~3.10]{randecker_16}) and all ends are accumulated by genus. Therefore, to determine the topological type of a finite cover, we only have to work out the number of singularities.

\begin{prop}[Topology of covers] \label{thm:topology}
	For every $d\in \mathbb{N}$, there exists a finite abelian of the
	Chamanara surface with $d$ ends.

	More specifically, for an abelian group $G$ of order $d$ and $h \in
	\TheSequences_G$, the cover $Y_h$ has $d$ ends if and only if there exists
	an $N\in \mathbb{N}$ such that
	$\sum_{k=-N}^{N-1} (-1)^k h_k = 0$
	and $h_{-n} = h_{-N} = h_{N} = h_{n}$ for all $n \geq N$.
\end{prop}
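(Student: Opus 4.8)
The plan is to reduce the computation of the number of ends to a monodromy count around the singularity. By the discussion at the start of this section, $\overline{Y_h}$ is compact and its ends are in bijection with its singularities; hence the number of ends of $Y_h$ equals the number of preimages under $p\colon Y_h\to X$ of the unique singularity $\sigma$ of $X$. Since $p$ is normal with deck group $G$ of order $d$, the group $G$ acts transitively on these preimages, so their number equals $[G:\kappa(\Pi)]$, where $\kappa\colon\pi_1(X)\to G$ is a defining homomorphism of $p$ and $\Pi\le\pi_1(X)$ is the peripheral subgroup generated by all loops contained in an arbitrarily small punctured neighbourhood of $\sigma$. Thus $Y_h$ has the maximal number $d$ of ends if and only if $\kappa(\Pi)=\{0\}$. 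As $G$ is abelian, $\kappa$ factors through $H_1(X,\ZZ)$ (as already used in \cref{sec:action}), so it suffices to compute the image of $\Pi$ in homology and to test it against the $h_k$ via $\overline{\kappa}([\alpha_k])=h_k$ and $\overline{\kappa}([\beta_k])=h_{-k}$.

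The main work, and the main obstacle, is to determine $\Pi$ explicitly; this is exactly the combinatorial content encoded by \cref{prop:criterion_identification_corners}. Here I would track how the vertices of the defining square are identified. Labelling the endpoints of the segments on the four sides by $B_n,T_n,L_n,R_n$ (with $B_n,R_n\to P\coloneqq(1,0)$ and $T_n,L_n\to Q\coloneqq(0,1)$), the gluing of $b_n$ identifies $B_{n-1}\sim T_n$ and $B_n\sim T_{n-1}$ with monodromy $h_{-n}=\kappa(\beta_n)$, while the gluing of $a_n$ identifies $L_{n-1}\sim R_n$ and $L_n\sim R_{n-1}$ with monodromy $h_n=\kappa(\alpha_n)$; the genuine corners give $B_0=L_0$ and $T_0=R_0$. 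These identifications organise into two bi-infinite zig-zag paths that accumulate, at both ends, alternately on $P$ and on $Q$. Loops of $\Pi$ then come in two flavours: the small handles sitting arbitrarily deep near the corners, whose classes are the consecutive differences $[\alpha_n]-[\alpha_{n+1}]$, $[\beta_n]-[\beta_{n+1}]$ and $[\alpha_n]-[\beta_n]$ for large $n$; and a single loop running along a zig-zag path from the neighbourhood of one accumulation corner to the other and closing up through the completion, whose class is $\sum_{k=1}^{N-1}(-1)^k[\alpha_k]+\sum_{j=1}^{N}(-1)^j[\beta_j]$. The delicate point is the behaviour at $P$ and $Q$: because the zig-zags do not converge to a single point, one must analyse the metric completion there and show that the copies of $P$ (respectively $Q$) in $Y_h$ close the path into an honest loop precisely when the deep differences die, i.e.\ when $h$ is eventually constant; I expect this completion analysis to be the crux.

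Putting these together, $\kappa(\Pi)=\{0\}$ is equivalent to the vanishing of $\overline{\kappa}$ on all the generators above. The deep handles force $h_n=h_{n+1}$, $h_{-n}=h_{-(n+1)}$ and $h_n=h_{-n}$ for all large $n$, i.e.\ the existence of $N\in\mathbb{N}$ with $h_{-n}=h_{-N}=h_N=h_n$ for every $n\ge N$; and the global loop forces $\sum_{k=1}^{N-1}(-1)^k h_k+\sum_{j=1}^{N}(-1)^j h_{-j}=0$, which is exactly $\sum_{k=-N}^{N-1}(-1)^k h_k=0$ after reindexing $k=-j$ and using $h_0=0$ (the overall sign being irrelevant, as it only reflects the orientation of the loop). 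This yields the stated criterion. Finally, for the existence statement I would, given $d$, take $G=\ZZ/d\ZZ$ and choose $h$ eventually equal to a generator $c=1$ in both directions; the finitely many central entries $h_{\pm1},\dots,h_{\pm(N-1)}$ can then be prescribed so that the alternating sum vanishes while keeping $\langle h_i\mid i\in\ZZ\rangle=G$, so that $h\in\TheSequences_G$ and, by the criterion just proved, $Y_h$ has exactly $d$ ends. The remaining steps beyond the corner/completion analysis are routine bookkeeping.
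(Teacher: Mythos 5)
Your overall reduction is sound, and it is really a repackaging of the paper's own strategy: the paper also counts singularities (which equal the ends, since the completion is compact) by deciding which of the $d$ lower-right corners of the squares become identified, and the group $G'$ of \cref{prop:criterion_identification_corners} is precisely your $\kappa(\Pi)$, with the number of ends equal to $[G:G']$. Your two ``flavours'' of peripheral loops match the paper's two mechanisms exactly: the deep handles correspond to \cref{lem:sufficient_identification_I} (differences of accumulation values of $(h_n)$ and $(h_{-n})$), and your global zig-zag loop corresponds to \cref{lem:sufficient_identification_II} (the path of \cref{fig:rotational_components} crossing $b_n,\dots,b_1,a_1,\dots,a_{n+1}$ successively, which produces the alternating sum). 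Your translation of these generators into the stated criterion, and your existence construction for $G=\ZZ/d\ZZ$, are both correct.

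The genuine gap is that the central claim --- that the image of the peripheral subgroup is generated by \emph{exactly} these classes --- is asserted rather than proved, and you flag this yourself (``I expect this completion analysis to be the crux''). Both implications of the proposition depend on it. To get ``conditions $\Rightarrow$ $d$ ends'' you need the upper bound: no loop in a small punctured neighbourhood of $\sigma$ maps under $\kappa$ outside the subgroup generated by your two flavours. This is the ``$\Rightarrow$'' direction of \cref{prop:criterion_identification_corners}, which the paper proves by a metric estimate: if two corners are identified, then for every $n$ there is a path of length at most $2\sqrt{2}/2^{n}$ between the points $x_n^g$, and such a short path can cross the deep edges $a_m,b_m$ (with $m\geq N$) in any order, but can reach a shallow edge only by crossing the whole chain $b_{N-1},\dots,b_1,a_1,\dots,a_N$ successively. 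Nothing in your proposal substitutes for this estimate, and without it one cannot exclude further identifications, i.e.\ extra elements of $\kappa(\Pi)$. Conversely, ``$d$ ends $\Rightarrow$ conditions'' needs the lower bound that the zig-zag class is genuinely peripheral, i.e.\ realized by honest loops in arbitrarily small neighbourhoods --- the very ``closing up through the completion'' issue you defer; this is \cref{lem:sufficient_identification_II}, again an explicit path construction. Finally, even your starting formula (number of ends equals $[G:\kappa(\Pi)]$) is not free of charge here: $\sigma$ is a wild singularity, so small punctured neighbourhoods are not annuli and have infinitely generated fundamental group; making the formula precise requires the stabilization of $\kappa(\Pi_\epsilon)$ as $\epsilon\to 0$ (using finiteness of $G$) together with the fact that the metric on $Y_h$ is a path metric, so that identified singularities are joined by short paths inside $Y_h$ avoiding the singular set. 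These points are exactly what the paper's corner-sequence argument handles; they constitute the proof itself, not ``routine bookkeeping''.
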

\cref{thm:topology} is a stronger version of \cref{thmnew:topology} and thus implies it.
Furthermore, for $d$ prime, the cover $Y_h$ can have either one or $d$ ends. Hence, \cref{thm:topology} shows that most abelian covers of degree $d$ have one end and therefore are Loch Ness monsters.

In degree~$2$, we can strengthen \cref{thm:topology} to a characterization of the topological type.
For this, we say that an element $h \in \TheSequences_G$ 
is \emph{bi-eventually equal to $g\in G$} if all but finitely many entries of $h$ are equal to $g$.

\begin{cor}[Characterization of Loch Ness monsters for $d=2$] \label{thm:topology_d=2}
	Let $h \in \TheSequences_{\mathbb{Z}/2\mathbb{Z}}$. Then $Y_h$ is a Jacob's ladder if and only if $h$ is either bi-eventually equal to~$0$ and contains an even number of entries with value $1$ or bi-eventually equal to~$1$ and contains an even number of entries with value $0$ (including $h_0$).
	In all other cases, $Y_h$ is a Loch Ness monster.
\end{cor}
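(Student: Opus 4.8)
The plan is to derive the corollary directly from \cref{thm:topology} specialized to $G = \mathbb{Z}/2\mathbb{Z}$, together with the observation recalled at the start of \cref{sec:topology} that every finite cover of the Chamanara surface has infinite genus with all ends accumulated by genus. Consequently a degree-$2$ cover $Y_h$ is a Loch Ness monster when it has exactly one end and a Jacob's ladder when it has exactly two ends, and by \cref{thm:topology} (with $d = 2$) it has two ends precisely when there is an $N$ with $\sum_{k=-N}^{N-1}(-1)^k h_k = 0$ and $h_{-n} = h_{-N} = h_N = h_n$ for all $n \geq N$. So the task reduces to rewriting these two conditions in terms of the parity counts in the statement.

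First I would simplify in $\mathbb{Z}/2\mathbb{Z}$: since $-g = g$ there, we have $(-1)^k h_k = h_k$, so the first condition becomes $\sum_{k=-N}^{N-1} h_k = 0$, i.e., the number of indices $k \in \{-N, \ldots, N-1\}$ with $h_k = 1$ is even. The second condition says exactly that $h$ is \emph{bi-eventually equal} to the single value $g \coloneqq h_N$, stable in both directions. I would then record that, once $h$ is bi-eventually equal to $g$, enlarging $N$ by one adjoins the entries $h_{-N-1}$ and $h_N$ to the window, both equal to $g$ for large $N$, changing the window sum by $2g = 0$; hence for all sufficiently large $N$ the window sum is the same, and its vanishing is a well-defined condition on $h$.

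The core step is a case split on the stable value $g$. For $g = 0$ the finitely many exceptional entries are precisely the entries equal to $1$; for $N$ large the window contains all of them, so the window sum equals the total number of $1$-entries modulo $2$, which vanishes if and only if there is an even number of entries equal to $1$. For $g = 1$ the window $\{-N, \ldots, N-1\}$ has $2N$ entries, the exceptional ones being those equal to $0$; writing the count of $1$'s as $2N$ minus the count of $0$'s and using $2N \equiv 0 \pmod 2$, the window sum is congruent to the number of $0$-entries, which vanishes if and only if there is an even number of entries equal to $0$, with $h_0 = 0$ counted among them. Matching the two cases against \cref{thm:topology} reproduces the two alternatives of the statement; every remaining $h$ — those not bi-eventually constant, or bi-eventually constant with the opposite parity — has a single end and hence gives a Loch Ness monster.

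I expect the only delicate point to be the bookkeeping around the asymmetric window $\{-N, \ldots, N-1\}$ and the role of the parity of $2N$ in the case $g = 1$; in particular I would take care to count $h_0$ as one of the $0$-entries and to confirm that bi-eventual equality to $g$ forces the \emph{same} value $g$ at both ends, as demanded by the chained equality $h_{-N} = h_N$ in \cref{thm:topology}.
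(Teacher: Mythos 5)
Your proposal is correct and takes essentially the same route as the paper: the paper's proof likewise observes that a degree-$2$ cover has one or two ends (hence is a Loch Ness monster or a Jacob's ladder) and obtains the characterization by specializing \cref{thm:topology} to $G = \mathbb{Z}/2\mathbb{Z}$. The parity bookkeeping you carry out (the window sum reducing to the count of $1$-entries when the stable value is $0$, and to the count of $0$-entries when it is $1$) is exactly the specialization the paper leaves implicit.
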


\begin{proof}
	As $Y_h$ is a cover of degree $2$, it can have either one or two
	singularities, that is, can be a Loch Ness monster or a Jacob's ladder. For
	the latter, specializing the condition from \cref{thm:topology} to $G =
	\mathbb{Z}/2\mathbb{Z}$ gives the characterization.
\end{proof}

In particular, if $h$ is periodic, then $Y_h$ is topologically a Loch Ness monster (or disconnected). Hence the covers $Y_h$ studied in \cref{sec:d=2} where $\Gamma(Y_h)$ has finite index in~$\Gamma(X)$ are topologically Loch Ness monsters.

To prepare the proof of \cref{thm:topology},
we study which of the points on the boundaries of the~$d$ squares that make up a cover are identified.
With the coordinates that we fixed in \cref{sec:chamanara}, every point on the boundary which is not contained in an (open) edge is identified with the point $(1,0)$ in the lower right corner of at least one square. Therefore, it is enough to determine which of the~$d$ lower right corners are identified. For this, we consider in the copy labelled with $g\in G$ the sequence~$(x_n^g)$ of points $x_n^g$ with coordinates~$(1 - \sfrac{1}{2^n}, \sfrac{1}{2^n} )$ for every~$n\in \mathbb{N}$.
Note that the distance from~$x_n^g$ to the lower right corner of copy $g$ is $\sfrac{\sqrt{2}}{2^n}$ and to any segment $a_m$ or $b_m$ with $m \geq n$ is at most~$\sfrac{\sqrt{2}}{2^n}$.

Next we use the observation that the lower right corners of the copies $g$ and $g'$ are identified if and only if~$d(x_n^g, x_n^{g'}) \to 0$ for $n \to \infty$.
In particular, to show that two lower right corners are identified, we describe paths from $x_n^g$ to $x_n^{g'}$ and calculate their lengths. These paths have to use the edge identifications and hence depend on the entries of~$h \in \TheSequences_G$.

Every element of $G$ can appear as entries of $h$ either infinitely often,
finitely often, or not at all. In particular, there exists an $N \in \mathbb{N}$
such that the elements of $G$ that appear finitely often, only appear in the
entries $h_{-N}, \ldots, h_{N-1}$. Then every element of $G$ that appears
as~$h_n$ for $n \geq N$ or~$n < -N$ must appear infinitely often and is an
accumulation point of the sequence~$(h_n)$ or the sequence $(h_{-n})$. We
consider the set of paths from $x_n^g$ to $x_n^{g'}$ that cross edges which are labelled with
accumulation points in \cref{lem:sufficient_identification_I} and the set of
paths from $x_n^g$ to $x_n^{g'}$ that cross edges which are labelled with elements that appear only
finitely often in~\cref{lem:sufficient_identification_II}.

\begin{lem}[Sufficient condition for identification of lower right corners I] \label{lem:sufficient_identification_I}
	Let $h \in \TheSequences_G$ and~$g, g' \in G$ be accumulation points of the sequence $(h_n)$ or $(h_{-n})$. Then the lower right corners of the copies $g$ and $g'$ are identified in $Y_h$.

\begin{proof}
	Let $n\in \mathbb{N}$. Then there exist $m, m' \geq n$ such that $h_m = g$ or $h_{-m} = g$ and $h_{m'} = g'$ or~$h_{-m'} = g'$. That means, there exists a path from $x_n^{g}$ crossing the edge $a_{m'}$ or $b_{m'}$ into the copy $g+g'$ and from there crossing the edge $a_{m}$ or $b_m$ (in the reverse direction) into the copy~$g+g' - g = g'$ and there to $x_n^{g'}$.
	Each of the three parts of the path can be chosen to have length at most $\sfrac{\sqrt{2}}{2^n}$, hence the total length of this path is at most $3 \cdot \sfrac{\sqrt{2}}{2^n}$.

	Hence, we have $d(x_n^g, x_n^{g'}) \leq \frac{3 \sqrt{2}}{2^{n}}$ which tends to $0$ as $n \to \infty$.
\end{proof}
\end{lem}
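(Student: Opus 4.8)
The plan is to reduce the statement to the metric criterion recalled immediately before it: the lower right corners of the copies indexed by $g$ and $g'$ are identified in the metric completion of $Y_h$ if and only if $d(x_n^g, x_n^{g'}) \to 0$ as $n \to \infty$. Thus it suffices to produce, for every $n \in \mathbb{N}$, a path in $Y_h$ from $x_n^g$ to $x_n^{g'}$ whose length is $O(2^{-n})$.

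The geometric input I would rely on is the estimate already noted in this section: the point $x_n^g = (1 - 2^{-n}, 2^{-n})$ lies within distance $\sqrt{2}/2^n$ both of the lower right corner of its square and of every segment $a_m$ and $b_m$ with $m \geq n$, since all of these cluster at that corner. I would first record precisely how these segments connect the copies: by the monodromy description in \cref{Determine_h} and \cref{MonodromyAbelian}, crossing $a_m$ (respectively $b_m$) forward carries the copy labelled $c$ to the copy labelled $c + h_m$ (respectively $c + h_{-m}$), and crossing it backward subtracts the same group element.

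Next I would exploit the hypothesis that $g$ and $g'$ are accumulation points of $(h_k)$ or of $(h_{-k})$: given $n$, this yields indices $m, m' \geq n$ for which some segment among $a_m, b_m$ is labelled by $g$ and some segment among $a_{m'}, b_{m'}$ is labelled by $g'$. The path I would build starts at $x_n^g$, crosses the $g'$-labelled segment into the copy $g + g'$, moves to the $g$-labelled segment and crosses it in reverse into the copy $(g + g') - g = g'$, and then proceeds to $x_n^{g'}$. Since each of the three segments involved lies within $\sqrt{2}/2^n$ of a lower right corner and all gluings are translations, each leg can be taken of length at most $\sqrt{2}/2^n$, so the total length is at most $3\sqrt{2}/2^n$. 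This tends to $0$, giving the identification.

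The part I expect to require the most care is the middle leg together with the sign bookkeeping: one must verify that upon entering the intermediate copy $g + g'$ near its corner, the $g$-labelled segment is still within distance $\sqrt{2}/2^n$, so that the traversal inside that copy stays short, and one must cross the $g$-segment in the \emph{reverse} direction, so as to subtract $g$ and land in copy $g'$ rather than in $2g + g'$. Once the three legs are set up with the correct orientations, the length bound is a routine estimate.
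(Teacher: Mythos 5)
Your proposal is correct and follows essentially the same route as the paper's proof: the same metric criterion $d(x_n^g, x_n^{g'}) \to 0$, the same choice of indices $m, m' \geq n$ from the accumulation-point hypothesis, the same three-leg path through the intermediate copy $g+g'$ (crossing the $g'$-labelled edge forward and the $g$-labelled edge in reverse), and the same length bound $3\sqrt{2}/2^n$. The extra bookkeeping you flag (the monodromy signs and the fact that the relevant segments also cluster near the corner of the intermediate copy) is exactly what the paper leaves implicit, so your write-up is, if anything, slightly more careful.
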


Because the considered coverings are normal, \cref{lem:sufficient_identification_I} also shows that for accumulation points~$g, g'$ of $(h_n)$ or $(h_{-n})$, the lower right corners of the copies $0$ and $g-g'$ as well as $g' - g$ are identified.

\begin{lem}[Sufficient condition for identification of lower right corners II] \label{lem:sufficient_identification_II}
	Let $h \in \TheSequences_G$ and~$N \in \mathbb{N}$ such that for $n \geq N$, every $h_n$ is an accumulation point of $(h_n)$ and every $h_{-n}$ is an accumulation point of $(h_{-n})$. Let $g = \sum_{k=-N}^{N-1} (-1)^k h_k$. Then the lower right corners of the copies~$0$ and $g$ are identified.

\begin{proof}
	We show the identification of the lower right corners of the copies $0$ and $g$ in several~steps.

	Note first that if $n,m \geq N$, then by \cref{lem:sufficient_identification_I}, the lower right corners of the copies~$h_n$ and~$h_m$ are identified. Because the covering is normal, this implies that also the lower right corners of the copies $h_n - h_m$ and $0$ are identified.
	The same holds for $h_{n'}$ and $h_{m'}$ for any $n', m' \geq N$, that is, the lower right corners of the copies $h_{m'} - h_{n'}$ and $0$ are identified. Combining this with the previous identification and using again that the cover is normal yields that the lower right corners of the copies $(h_n - h_m) - (h_{m'} - h_{n'}) = (h_n - h_m) + (h_{n'} - h_{m'})$ and $0$ are identified.
	
	Now let $n \in \mathbb{N}$, $n > N$, and $n$ of the same parity as $N$. Applying the above argument iteratively for the term
	\begin{equation*}
		g' \coloneqq (-1)^{N} h_{N} + \ldots + (-1)^{n+1} h_{n+1}
		,
	\end{equation*}
	that is, for $(-1)^N h_N + (-1)^{N+1} h_{N+1}$ and so on up to $(-1)^n h_n + (-1)^{n+1} h_{n+1}$ yields that the lower right corners of the copies $0$ and $g'$ are identified. As the cover is normal, this further implies that the lower right corners of the copies~$g$ and~$g+g'$ are identified.
	
	Analogously, the lower right corners of the copies $0$ and $-g''$ with
	\begin{equation*}
		g'' \coloneqq (-1)^{-N-1} h_{-N-1} + \cdots + (-1)^{-n} h_{-n}
	\end{equation*}
	are identified as well.

	\begin{figure}
		\centering
		\begin{tikzpicture}[x=1cm,y=1cm,scale=0.5]
			\newcommand\chamanaraseiteodd{
				\draw (0,0) -- (7.75,0);
				\draw[densely dotted] (7.75,0) -- (8,0);
				\draw[thick] (0,0.6) arc (90:0:0.6);
				\fill (0,0) circle (2pt);
				\fill (4,0) circle (2pt);
				\draw[thick] (5.7,0) arc (180:0:0.3);
				\fill (6,0) circle (2pt);
				\fill (7,0) circle (2pt);
				\draw[thick] (7.35,0) arc (180:0:0.15);
				\fill (7.5,0) circle (2pt);
				\fill (7.75,0) circle (2pt);
			}
			\newcommand\chamanaraseiteeven{
				\draw (0,0) -- (7.75,0);
				\draw[densely dotted] (7.75,0) -- (8,0);
				\fill (0,0) circle (2pt);
				\draw[thick] (3.6,0) arc (180:0:0.4);
				\fill (4,0) circle (2pt);
				\fill (6,0) circle (2pt);
				\draw[thick] (6.75,0) arc (180:0:0.25);
				\fill (7,0) circle (2pt);
				\fill (7.5,0) circle (2pt);
				\draw[thick] (7.65,0) arc (180:0:0.1);
				\fill (7.75,0) circle (2pt);
			}

			\chamanaraseiteodd
			\path (0,0) -- node[below] {$b_1$} (4,0) -- node[below] {$b_2$} (6,0) -- node[below] {$b_3$} (7,0) -- node[below] {$b_4$} (7.5,0);

			\begin{scope}[xscale=-1,yscale=-1,xshift=-8cm,yshift=-8cm]
				\chamanaraseiteeven
				\path (0,0) -- node[above] {$b_1$} (4,0) -- node[above] {$b_2$} (6,0) -- node[above] {$b_3$} (7,0) -- node[above] {$b_4$} (7.5,0);
			\end{scope}

			\begin{scope}[xscale=-1,rotate=90]
				\chamanaraseiteodd
				\path (0,0) -- node[left] {$a_1$} (4,0) -- node[left] {$a_2$} (6,0) -- node[left] {$a_3$} (7,0) -- node[left] {$a_4$} (7.5,0);
			\end{scope}

			\begin{scope}[xscale=-1,rotate=-90,xshift=-8cm,yshift=-8cm]
				\chamanaraseiteeven
				\path (0,0) -- node[right] {$a_1$} (4,0) -- node[right] {$a_2$} (6,0) -- node[right] {$a_3$} (7,0) -- node[right] {$a_4$} (7.5,0);
			\end{scope}

			\begin{scope}[xshift=12cm]
				\chamanaraseiteeven
				\path (0,0) -- node[below] {$b_1$} (4,0) -- node[below] {$b_2$} (6,0) -- node[below] {$b_3$} (7,0) -- node[below] {$b_4$} (7.5,0);

				\begin{scope}[xscale=-1,yscale=-1,xshift=-8cm,yshift=-8cm]
					\chamanaraseiteodd
					\path (0,0) -- node[above] {$b_1$} (4,0) -- node[above] {$b_2$} (6,0) -- node[above] {$b_3$} (7,0) -- node[above] {$b_4$} (7.5,0);
				\end{scope}

				\begin{scope}[xscale=-1,rotate=90]
					\chamanaraseiteeven
					\path (0,0) -- node[left] {$a_1$} (4,0) -- node[left] {$a_2$} (6,0) -- node[left] {$a_3$} (7,0) -- node[left] {$a_4$} (7.5,0);
				\end{scope}

				\begin{scope}[xscale=-1,rotate=-90,xshift=-8cm,yshift=-8cm]
					\chamanaraseiteodd
					\path (0,0) -- node[right] {$a_1$} (4,0) -- node[right] {$a_2$} (6,0) -- node[right] {$a_3$} (7,0) -- node[right] {$a_4$} (7.5,0);
				\end{scope}
			\end{scope}
		\end{tikzpicture}
		\caption{A path in $X$ that crosses the edges $b_{n}, \ldots, b_1, a_1, \ldots, a_{n+1}$ successively (on the left for~$n$ even and on the right for $n$ odd).}
		\label{fig:rotational_components}
	\end{figure}

	To bring everything together, we consider a  path from $x_n^{-g''}$ to the edge $b_n$ within copy $-g''$, from there successively crossing the edges $b_{n}, \ldots, b_1, a_1, \ldots, a_{n+1}$ into the copy $-g'' + (g'' + g + g')$, and continuing in copy $g + g'$ to~$x_n^{g + g'}$. The middle part of the path is a lift of a path in $X$ as shown in \cref{fig:rotational_components} and can be made as short as required, say $2 \cdot \sfrac{\sqrt{2}}{2^n}$, whereas the first and last part of the path can be chosen to be of length at most~$\sfrac{\sqrt{2}}{2^n}$.

	Hence, we have $d(x_n^{-g''}, x_n^{g+ g'}) \leq \frac{\sqrt{2}}{2^{n-2}}$ which tends to $0$ as $n \to \infty$.
	This implies that the lower right corners of the copies $-g''$ and $g+g'$ are identified.
	Together with the first observations, we obtain that the lower right corner of copy $0$ is identified with the lower right corner of copy~$-g''$, hence of copy~$g + g'$, and hence of copy $g$.
\end{proof}
\end{lem}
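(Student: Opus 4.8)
The plan is to work with the metric criterion already in force: the lower right corners of copies $g_1$ and $g_2$ are identified in $Y_h$ exactly when $d(x_n^{g_1}, x_n^{g_2}) \to 0$ as $n \to \infty$, where $x_n^g$ denotes the point $(1 - 2^{-n}, 2^{-n})$ in copy $g$. So for the copies $0$ and $g$ the goal is, for each large $n$, to produce a path from $x_n^0$ to $x_n^g$ of length $O(2^{-n})$. I would assemble such a path from three ingredients: the already established \cref{lem:sufficient_identification_I} (identifying corners indexed by accumulation points), the normality of $p$ (which turns any identification of corners of copies $a$ and $b$ into one of copies $a - b$ and $0$), and one explicit connecting path whose monodromy realizes $g$.

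First I would extract from \cref{lem:sufficient_identification_I} together with normality the following closure property. For any $n, m \geq N$ the corners of copies $h_n$ and $h_m$ are identified, since both are accumulation points; normality then identifies the corners of $h_n - h_m$ and $0$. Adding two such relations shows that the corners of $(h_n - h_m) + (h_{n'} - h_{m'})$ and $0$ are identified for all $n, m, n', m' \geq N$. This stability under pairwise differences is exactly what lets partial alternating sums be absorbed into copy $0$.

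Next, for $n > N$ of the same parity as $N$, I would introduce the two partial alternating sums
\[
g' = (-1)^N h_N + \dots + (-1)^{n+1} h_{n+1}, \qquad g'' = (-1)^{-N-1} h_{-N-1} + \dots + (-1)^{-n} h_{-n}.
\]
Grouping consecutive terms in pairs and applying the closure property iteratively, I would show that the corners of copies $0$ and $g'$ are identified, and likewise those of $0$ and $-g''$; normality then upgrades these to identifications of $g$ with $g + g'$ and of $0$ with $-g''$. The only remaining link is between copies $-g''$ and $g + g'$.

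The crux of the argument---and the step I expect to be the main obstacle---is the construction and length bound for this last link. I would take a path from $x_n^{-g''}$ to the edge $b_n$ inside copy $-g''$, then follow a lift of the path in $X$ shown in \cref{fig:rotational_components} which crosses $b_n, \dots, b_1, a_1, \dots, a_{n+1}$ in succession, and finish at $x_n^{g+g'}$. Two points require care. First, the monodromy: reading the crossings off via $h_{-m} = \kappa_p(\beta_m)$ and $h_m = \kappa_p(\alpha_m)$ with the orientations encoded by the signs $(-1)^k$, the net deck transformation is $g + g' + g'' = \sum_{k=-n}^{n+1} (-1)^k h_k$, which carries copy $-g''$ to copy $g + g'$. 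Second, the length: the middle segment is a lift of a path supported in a small neighborhood of the singularity and can be shortened to at most $2\sqrt{2}\cdot 2^{-n}$, while the two end segments cost at most $\sqrt{2}\cdot 2^{-n}$ each, giving $d(x_n^{-g''}, x_n^{g+g'}) \leq \sqrt{2}\cdot 2^{-(n-2)} \to 0$. Chaining the identifications $0 \sim -g'' \sim g + g' \sim g$ then yields the claim.
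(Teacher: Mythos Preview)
Your proposal is correct and follows essentially the same approach as the paper's proof: the same closure property from \cref{lem:sufficient_identification_I} plus normality, the same auxiliary alternating sums $g'$ and $g''$ (with the same parity constraint on $n$), the same explicit path crossing $b_n,\dots,b_1,a_1,\dots,a_{n+1}$ with the same length bound $\sqrt{2}\cdot 2^{-(n-2)}$, and the same final chain $0\sim -g''\sim g+g'\sim g$. Your added remark spelling out the monodromy as $g+g'+g''=\sum_{k=-n}^{n+1}(-1)^k h_k$ is a useful elaboration of a step the paper leaves implicit.
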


In the next proposition, we show that there are no other ways of identifying lower right corners than the ones from \cref{lem:sufficient_identification_I,lem:sufficient_identification_II} and combinations thereof.

\begin{prop}[Criterion for identification of lower right corners] \label{prop:criterion_identification_corners}
	Let $G$ be a finite abelian group, $h \in \TheSequences_G$, and $N$ as in \cref{lem:sufficient_identification_II}.
	Let $G'$ be the group generated by $\sum_{k=-N}^{N-1} (-1)^{k} h_k$ and by all elements of the form $g-g'$ where $g$ and $g'$ are accumulation points of the sequences $(h_n)$ and~$(h_{-n})$.

	Then for any $g, g' \in G$, the lower right corners of the copies $g$ and $g'$ are identified if and only if~$g-g' \in G'$.

	In particular, the number of ends of $Y_h$ is $[G: G']$.

\begin{proof}
	We show the following equivalent statement: Let $g \in G$. The lower right
	corners of the copies $0$ and $g$ are identified if and only if $g \in G'$.

	``$\Leftarrow$'':
	We assume first that $g$ is a generator (or its inverse) of $G'$. Then we
	have by
	\cref{lem:sufficient_identification_I,lem:sufficient_identification_II} that
	the lower right corners of the copies $0$ and $g$ (or of the copies $0$ and
	$- g$, and hence of $g$ and $0$) are identified.

	If $g$ is the sum of generators (and their inverses) of $G'$, the statement
	follows from the transitivity of the identification and from the fact that
	the covering is normal.

	``$\Rightarrow$'':
	Let $n \geq N+2$. As the lower right corners of the copies $0$ and $g$ are identified, there exists a path from $x_n^0$ to $x_n^g$ of length $2 \cdot \sfrac{\sqrt{2}}{2^{n}}$. We consider which edges $a_m$ and $b_m$ a path of this length can cross. For $m \geq N$, the edges~$a_m$ and $b_m$ can be crossed in any order (but always an even number of them). For $m < N$, edges $a_m$ or $b_m$ can only be crossed if~$b_{N-1}, \ldots, b_1, a_1, \ldots, a_{N}$ are all crossed successively (potentially in reverse order).
	Note that the first possibility corresponds to the setting of \cref{lem:sufficient_identification_I} whereas the second possibility corresponds to the setting of \cref{lem:sufficient_identification_II}.
	With this, we can deduce that the path goes from copy~$0$ to copy $g$ where $g$ can be written as a sum of generators of $G'$ and their inverses. This proves~$g \in G'$.
\end{proof}
\end{prop}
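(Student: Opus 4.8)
The plan is to reduce the statement to the special case $g' = 0$ and then to analyze the local geometry near the corner singularity. Since the deck group $G$ acts by translations permuting the $d$ copies, and translations are isometries sending $x_n^{g}$ to $x_n^{g+g_0}$, the relation ``the lower right corners of copies $g$ and $g'$ are identified'' is unchanged when a fixed element is added to both indices; hence it depends only on $g-g'$, and it suffices to prove that the corners of copies $0$ and $g$ are identified if and only if $g \in G'$. I will also use that this relation is an equivalence relation: symmetry is clear, and transitivity holds because if $(x_n^{g})$ and $(x_n^{g''})$ both converge to $(x_n^{g'})$ then they converge to each other.

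For the direction $g \in G' \Rightarrow$ identified, I would first verify the claim on the generators of $G'$. For a generator $g_1 - g_2$ with $g_1, g_2$ accumulation points of $(h_n)$ or $(h_{-n})$, \cref{lem:sufficient_identification_I} identifies the corners of copies $g_1$ and $g_2$, and applying the deck transformation by $-g_2$ identifies the corners of copies $g_1 - g_2$ and $0$. For the remaining generator $\sum_{k=-N}^{N-1}(-1)^k h_k$, \cref{lem:sufficient_identification_II} gives the identification with $0$ directly. To pass from generators to all of $G'$, I would show that $S = \{\, g \in G : \text{corners of } 0 \text{ and } g \text{ are identified}\,\}$ is a subgroup: it is closed under inverses (translate a $0 \sim g$ identification by $-g$) and under addition (if $0 \sim g_1$ and $0 \sim g_2$, then translating the latter by $g_1$ yields $g_1 \sim g_1 + g_2$, and transitivity with $0 \sim g_1$ gives $0 \sim g_1 + g_2$). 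As $S$ contains every generator of $G'$, it contains $G'$.

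The converse is the heart of the argument and the step I expect to be the main obstacle. Suppose the corners of copies $0$ and $g$ are identified, so $d(x_n^0, x_n^g) \to 0$; for $n \geq N+2$ fix a path of length at most $\sfrac{2\sqrt{2}}{2^n}$ from $x_n^0$ to $x_n^g$. The key is a geometric classification of which edges $a_m, b_m$ such a short path can cross. Because $a_m$ and $b_m$ have length $\sfrac{1}{2^m}$, a path this short cannot afford to cross a low-index edge individually; I would argue that for $m \geq N$ the edges $a_m, b_m$ may be crossed near the corner (with the crossings balancing in pairs), whereas crossing any edge of index $m < N$ forces the path to traverse the entire block $b_{N-1}, \ldots, b_1, a_1, \ldots, a_N$ in succession, as in \cref{fig:rotational_components}. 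Each crossing of $a_m$ or $b_m$ with $m \geq N$ changes the active copy by $\pm h_m$ or $\pm h_{-m}$, an accumulation point, so such crossings combine into differences of accumulation points, which lie in $G'$; each full traversal of the low-index block changes the copy by $\pm\sum_{k=-N}^{N-1}(-1)^k h_k$, again a generator of $G'$. Hence the net change in copy along the path, namely $g$, is a sum of generators of $G'$ and their inverses, so $g \in G'$. The delicate points are making the length bookkeeping precise enough to exclude all other crossing patterns and tracking the orientation (sign) of each crossing.

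Finally, the equivalence identifies the classes of lower right corners with the cosets of $G'$: the corners of $g$ and $g'$ coincide exactly when $g \equiv g' \pmod{G'}$. As explained at the start of \cref{sec:topology}, the ends of $Y_h$ are in bijection with its singularities, which correspond precisely to these classes. Therefore the number of ends equals $[G : G']$.
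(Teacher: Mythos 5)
Your proposal follows essentially the same route as the paper's proof: the same reduction to comparing copy $0$ with copy $g$, the same use of \cref{lem:sufficient_identification_I,lem:sufficient_identification_II} on generators plus transitivity/normality for sums (your explicit subgroup argument for the set $S$ is just a slightly more detailed version of this), and the same short-path edge-crossing analysis for the converse, including the pairing of high-index crossings into differences of accumulation points and the forced traversal of the low-index block. The delicate bookkeeping you flag in the ``$\Rightarrow$'' direction is left at the same level of detail in the paper itself, so your write-up is faithful to the intended argument.
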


With this proposition, we can now prove \cref{thm:topology}.

\begin{proof}[Proof of \cref{thm:topology}]
	Let $h \in \TheSequences_G$. For $Y_h$ to have $d$ ends, the subgroup $G'$ of $G$ from \cref{prop:criterion_identification_corners} needs to be trivial. That is, the sequences $(h_n)$ and $(h_{-n})$ need to be eventually constant with the same value and $\sum_{k=-N}^{N-1} (-1)^{k} h_k = 0$.

	In particular, for every finite abelian group $G$, there exist infinitely many $h\in \TheSequences_G$ that fulfill these conditions. This can be seen by choosing $N \in \mathbb{N}$ and then choosing $h_{-N}, \ldots, h_{N-2}$ arbitrarily, $h_{N-1}$ such that the last condition is fulfilled, and $h_n = 0$ for every $n \leq -N-1$ and~$n\geq N$.
\end{proof}

\begin{figure}[btph]
	\centering
	\includegraphics[width=0.7\linewidth]{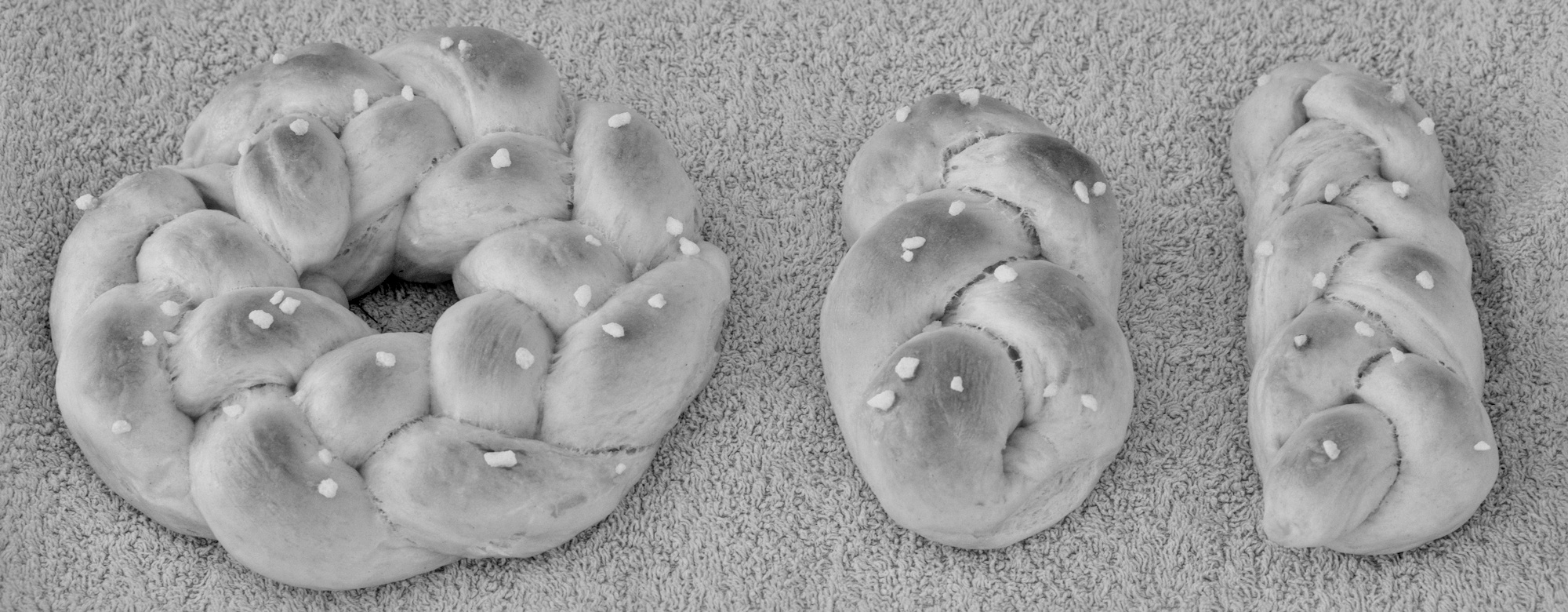}
	\caption{German yeast pastries: a Kranz (left) and two Striezel (middle and right).}
	\label{fig:kranz_striezel}
\end{figure}

\bibliographystyle{amsalpha}
\bibliography{bibliography}

\newcommand{\etalchar}[1]{$^{#1}$}
\providecommand{\bysame}{\leavevmode\hbox to3em{\hrulefill}\thinspace}
\providecommand{\MR}{\relax\ifhmode\unskip\space\fi MR }
\providecommand{\MRhref}[2]{%
  \href{http://www.ams.org/mathscinet-getitem?mr=#1}{#2}
}
\providecommand{\href}[2]{#2}
\begin{thebibliography}{ARS{\etalchar{+}}23}

\bibitem[AM24]{AM}
Jayadev~S. Athreya and Howard Masur, \emph{Translation surfaces}, Graduate
  Studies in Mathematics, vol. 242, Providence, RI: American Mathematical
  Society (AMS), 2024.

\bibitem[ARS{\etalchar{+}}23]{artigiani_randecker_sadanand_valdez_weitze-schmithuesen_23}
Mauro Artigiani, Anja Randecker, Chandrika Sadanand, Ferr{\'a}n Valdez, and
  Gabriela Weitze-Schmith{\"u}sen, \emph{Realizing groups as symmetries of
  infinite translation surfaces}, {arXiv}:2311.00158, 2023.

\bibitem[AS03]{allouche_shallit_03}
Jean-Paul Allouche and Jeffrey Shallit, \emph{Automatic sequences. {Theory},
  applications, generalizations}, Cambridge University Press, 2003.

\bibitem[BV13]{bowman_valdez_13}
Joshua~P. Bowman and Ferrán Valdez, \emph{Wild singularities of flat
  surfaces}, Israel Journal of Mathematics \textbf{197} (2013), 69--97.

\bibitem[Cha04]{chamanara_04}
Reza Chamanara, \emph{Affine automorphism groups of surfaces of infinite type},
  In the Tradition of Ahlfors and Bers, III (William Abikoff and Andrew Haas,
  eds.), Contemporary mathematics, vol. 355, 2004, pp.~123--145.

\bibitem[DHV24]{DHV:infinite}
Vincent Delecroix, Pascal Hubert, and Ferr{\'a}n Valdez, \emph{Infinite
  {Translation} {Surfaces} in the {Wild}}, {arXiv}:2403.05424, 2024.

\bibitem[EM12]{ellenberg_mcreynolds_12}
Jordan~S. Ellenberg and David~B. McReynolds, \emph{Arithmetic {Veech}
  sublattices of {SL}{{\((2,\mathbf {Z})\)}}}, Duke Mathematical Journal
  \textbf{161} (2012), no.~3, 415--429.

\bibitem[EM18]{EM}
Alex Eskin and Maryam Mirzakhani, \emph{Invariant and stationary measures for
  the {{\(\mathrm{SL}(2,\mathbb{R})\)}} action on moduli space}, Publ. Math.,
  Inst. Hautes {\'E}tud. Sci. \textbf{127} (2018), 95--324.

\bibitem[EMM15]{EMM}
Alex Eskin, Maryam Mirzakhani, and Amir Mohammadi, \emph{Isolation,
  equidistribution, and orbit closures for the
  {{\(\mathrm{SL}(2,\mathbb{R})\)}} action on moduli space}, Ann. Math. (2)
  \textbf{182} (2015), no.~2, 673--721.

\bibitem[FK36]{FoxKershner}
Ralph~H. Fox and Richard~B. Kershner, \emph{Concerning the transitive
  properties of geodesics on a rational polyhedron.}, Duke Math. J. \textbf{2}
  (1936), 147--150.

\bibitem[Gor25]{Gordillo}
Erick Gordillo, \emph{On {Mixing} {Flows} on {Finite} {Area} {Translation}
  {Surfaces}}, arXiv:2508.19139, 2025.

\bibitem[Had10]{hadari_10}
Asaf Hadari, \emph{{Translation Surfaces With Finite Veech Groups}},
  arXiv:1005.3516, 2010.

\bibitem[Hat02]{hatcher_02}
Allen Hatcher, \emph{Algebraic topology}, Cambridge University Press, 2002.

\bibitem[Hoo14]{Hooper:Veech1}
W.~Patrick Hooper, \emph{An infinite surface with the lattice property. {I}:
  {Veech} groups and coding geodesics}, Trans. Am. Math. Soc. \textbf{366}
  (2014), no.~5, 2625--2649.

\bibitem[Hoo18]{Hooper:moduli}
\bysame, \emph{Immersions and translation structures. {I}: {The} space of
  structures on the pointed disk}, Conform. Geom. Dyn. \textbf{22} (2018),
  235--270.

\bibitem[Hoo19]{Hooper:Veech2}
\bysame, \emph{An infinite surface with the lattice property. {II}: {Dynamics}
  of pseudo-{Anosovs}}, J. Mod. Dyn. \textbf{14} (2019), 243--276.

\bibitem[HR16]{herrlich_randecker_16}
Frank Herrlich and Anja Randecker, \emph{Notes on the {Veech} group of the
  {Chamanara} surface}, arXiv:1612.06877, 2016.

\bibitem[HS01]{HubertSchmidt01}
Pascal Hubert and Thomas~A. Schmidt, \emph{Invariants of translation surfaces},
  Ann. Inst. Fourier (Grenoble) \textbf{51} (2001), no.~2, 461--495.
  \MR{1824961}

\bibitem[HS06]{HubertSchmidt}
\bysame, \emph{An introduction to {Veech} surfaces}, Handbook of dynamical
  systems. Volume 1B, Elsevier, 2006, pp.~501--526.

\bibitem[HS10]{HubertSchmithusen}
Pascal Hubert and Gabriela Schmith{\"u}sen, \emph{Infinite translation surfaces
  with infinitely generated {Veech} groups}, J. Mod. Dyn. \textbf{4} (2010),
  no.~4, 715--732.

\bibitem[HT19]{hooper_trevino_19}
W.~Patrick Hooper and Rodrigo Trevi{\~n}o, \emph{Indiscriminate covers of
  infinite translation surfaces are innocent, not devious}, Ergodic Theory and
  Dynamical Systems \textbf{39} (2019), no.~8, 2071--2127.

\bibitem[KS00]{KenyonSmillie00}
Richard Kenyon and John Smillie, \emph{Billiards on rational-angled triangles},
  Comment. Math. Helv. \textbf{75} (2000), no.~1, 65--108.

\bibitem[LT16]{LindseyTrevino}
Kathryn Lindsey and Rodrigo Trevi{\~n}o, \emph{Infinite type flat surface
  models of ergodic systems}, Discrete Contin. Dyn. Syst. \textbf{36} (2016),
  no.~10, 5509--5553.

\bibitem[M{\"o}l09]{Moller:affine}
Martin M{\"o}ller, \emph{Affine groups of flat surfaces}, Handbook of
  Teichm\"uller theory. Volume~II, European Mathematical Society (EMS), 2009,
  pp.~369--387.

\bibitem[MV22]{morales_valdez_22}
Israel Morales and Ferr{\'a}n Valdez, \emph{Loxodromic elements in big mapping
  class groups via the {Hooper}-{Thurston}-{Veech} construction}, Algebraic \&
  Geometric Topology \textbf{22} (2022), no.~8, 3809--3854.

\bibitem[PSV11]{przytycki_schmithuesen_valdez_11}
Piotr Przytycki, Gabriela Schmithüsen, and Ferrán Valdez, \emph{Veech groups
  of {Loch} {Ness} monsters}, Annales de l'Institut Fourier \textbf{61} (2011),
  no.~2, 673--687.

\bibitem[Ran16]{randecker_16}
Anja Randecker, \emph{Geometry and topology of wild translation surfaces},
  Ph.D. thesis, Karlsruhe Institute of Technology, 2016.

\bibitem[RMV17]{ramirez_valdez_17}
Camilo Ram{\'{\i}}rez~Maluendas and Ferr{\'a}n Valdez, \emph{Veech groups of
  infinite-genus surfaces}, Algebraic \& Geometric Topology \textbf{17} (2017),
  no.~1, 529--560.

\bibitem[Sch05]{schmithuesen_2005}
Gabriela Schmithüsen, \emph{Veech groups of origamis}, Ph.D. thesis,
  Universität Karlsruhe (TH), 2005.

\bibitem[The22]{thevis_22}
Andrea~Hannah Thevis, \emph{On the geometry of p-origamis and beyond}, Ph.D.
  thesis, Universität des Saarlandes, 2022.

\bibitem[Tre18]{Trevino}
Rodrigo Trevi{\~n}o, \emph{Flat surfaces, {Bratteli} diagrams and unique
  ergodicity {\`a} la {Masur}}, Isr. J. Math. \textbf{225} (2018), no.~1,
  35--70.

\bibitem[Val09]{Valdez}
Ferr{\'a}n Valdez, \emph{Infinite genus surfaces and irrational polygonal
  billiards}, Geom. Dedicata \textbf{143} (2009), 143--154.

\bibitem[Vee89]{Veech}
William~A. Veech, \emph{Teichm{\"u}ller curves in moduli space, {Eisenstein}
  series and an application to triangular billiards}, Invent. Math. \textbf{97}
  (1989), no.~3, 553--583.

\bibitem[Vor96]{Vorobets96}
Yaroslav~B. Vorobets, \emph{Planar structures and billiards in rational
  polygons: the {V}eech alternative}, Uspekhi Mat. Nauk \textbf{51} (1996),
  no.~5(311), 3--42.

\bibitem[WG10]{GekhtmanWright}
Alex Wright and Ilya Gekhtman, \emph{Smillie's theorem on closed $\mathrm{SL}_2
  (\mathbb{R})$ orbits of quadratic differentials}, 2010.

\bibitem[ZK76]{KatokZemlyakov}
Aleksandr~N. Zemlyakov and Anatole~B. Katok, \emph{Topological transitivity of
  billiards in polygons}, Math. Notes \textbf{18} (1976), 760--764.

\end{thebibliography}

\end{document}